\newcommand\supsetsim{\mathrel{
  \ooalign{\raise0.2ex\hbox{$\supset$}\cr\hidewidth\raise-0.8ex\hbox{\scalebox{0.9}{$\sim$}}\hidewidth\cr}}}
\newtheorem{Theorem}{Theorem}[subsection]
\newtheorem*{theorem}{Theorem}
\newtheorem{maintheorem}{Theorem}
\newtheorem{T}{Theorem}[subsection]
\newtheorem{Corollary}[T]{Corollary}
\newtheorem{Proposition}[T]{Proposition}
\newtheorem{Lemma}[T]{Lemma}
\newtheorem{Definition}[T]{Definition}
\newtheorem{Example}[T]{Example}
\newtheorem*{claim}{Claim}
\newtheorem{Claim}[T]{Claim}
\def \AA {{\mathbb A}}
\def \dd {{\mathbb d}}
\def \RR {{\mathbb R}}
\def \ZZ {{\mathbb Z}}
\def \NN {{\mathbb N}}
\def \TT {{\mathbb T}}
\def \KK {{\mathbb K}}
\def \EE {{\mathbb E}}
\def \XX {{\mathbb X}}
\def \QQ {{\mathbb Q}}
\def \YY {{\mathbb Y}}
\def \cf {\mathcal{F}}
\def \cm {\mathcal{M}}
\def \cd {\mathcal{D}}
\def \cq {\mathcal{Q}}
\def \cg {\mathcal{G}}
\def \cc {\mathcal{C}}
\def \cu {\mathcal{U}}
\def \co {\mathcal{O}}
\def \cn {\mathcal{N}}
\def \ca {\mathcal{A}}
\def \cx {\mathcal{X}}
\def \cR {\mathscr{R}}
\newcommand{\leb}{\operatorname{Leb}}
\newcommand{\dist}{\operatorname{dist}}
\newcommand{\Agemo}{\operatorname{\mho}}
\newcommand{\supp}{\operatorname{supp}}
\newcommand{\im}{\operatorname{Im}}
\newcommand{\per}{\operatorname{Per}}
\newcommand{\interior}{\operatorname{interior}}
\newcommand{\diam}{\operatorname{diameter}}
\begin{document}

\author{V. Pinheiro}
\address{Instituto de Matematica - UFBA, Av. Milton Santos, s/n,
40170-110 Salvador Bahia}
\email{viltonj@ufba.br}

\thanks{Work carried out at National Institute for Pure and Applied Mathematics (IMPA) and Federal University of Bahia (UFBA). Partially supported by CNPq-Brazil (PQ 313272/2020-4)}
\date{\today}

\title{Ergodic Formalism for topological Attractors and historic behavior}

\maketitle
\begin{abstract}
We introduce the concepts of Baire Ergodicity and Ergodic Formalism, employing them to study topological and statistical attractors. Specifically, we establish the existence and finiteness of such attractors and provide applications for maps of the interval, Viana maps, non-uniformly expanding maps, partially hyperbolic systems, strongly transitive dynamics, and skew-products.

In a dynamical system with an abundance of historic behavior (encompassing all systems with some hyperbolicity, particularly Axiom A systems), one can show the existence of a residual set with zero measure for every invariant probability measure.
Hence, in principle, utilizing the classical ergodic theory to control the asymptotic topological/statistical behavior of generic orbits is not feasible.

Nevertheless, the results presented here can also be applied to such a system, contributing to the study of generic orbits in systems with an abundance of historic behavior.
\end{abstract}
\tableofcontents

\section{Introduction}

As it is wildly known, the asymptotic behavior of the forward orbit of a point $x\in X$ with respect to map $f:X\circlearrowleft$ is, in general, quite complex and strongly dependent on $x$.
To understand the behavior of the orbit of $x$ we should focus on the $\omega$-limit of $x$, denoted by $\omega_f(x)$ and defined as the set of accumulating points of the sequence $\{f^n(x)\}_{n\ge0}$.
The existence of an attractor $A\subset X$ can simplify dramatically the study of the $\omega$-limit sets, as a large proportion of points $x\in X$ are attracted to this attractor and in many cases $\omega_f(x)=A$ for most of the attracted points.
We say that a ``large proportion of points'' belongs to the {\em basin of attraction of $A$}, denoted by  $\beta_f(A)$, if $\beta_f(A)$ is not a zero measure set or a meager set.  A set is called {\em meager} if it is a countable union of nowhere dense subsets of $X$.
In this paper we focus on topological attractors, that is, when $\beta_f(A)$ is not a meager set.
In particular, this implies that we can consider a more general context than the context of metrical attractors (when $\beta_f(A)$ has Lebesgue positive measure), as it is not necessary to be restricted to finite-dimensional spaces.

A metric attractor, specially one that supports a physical  measure, has all the tools of  Ergodic Theory to study the statistical  behavior, such as Birkhoff averages, of almost all points in its basin of attraction.
In general, this is not the case for a topological attractor.
Indeed, generic points in the basin of attraction of a topological attractor exhibit  {\em historic behavior}, meaning that the convergence of the Birkhoff average is not expected, even for continuous functions (for more details, see Section~\ref{SectionMWAHB}). 
Here, we introduce the concept of Baire ergodicity (and variations) with two main objectives: (1) to study statistical properties of generic points (including Birkhoff averages) even for points with historic behavior, and (2) to study the existence and finiteness of topological attractors. 

Emphasizing again the importance of attractors, it was conjectured by Palis in 1995 (see \cite{Pa00,Pa2005}) that, in a compact smooth manifold, there is a dense set $D$ of differentiable dynamics such that, among other properties, any element of $D$ display finitely many (metrical) attractors whose union of basins of attraction has total probability measure in the ambient manifold.
This conjecture, known as the ``Palis Global Conjecture'' was built in such a way that, if proved to be true, one can then concentrate the attention on the description of the properties of these finitely many attractors and their basins of attraction to have an understanding on the whole system.
In a finite-dimensional space, we observe the existence of a strong connection between topological and metrical attractors (for instance, see Theorem~\ref{mainThojhgf} and Section~\ref{Connections}).
Hence, the problem of existence and finiteness of topological attractors is also strongly related with Palis conjecture.

\section{Statement of mains results}\label{SecStatOfMainsR}

Let $\XX$ be a compact metric space with distance $d$.
In this context, a set $U\subset\XX$ is {\bf\em meager} if it is contained in a countable union of compact sets with empty interior.
We say that two sets $U,V\subset\XX$ are (meager) {\bf\em  equivalent}, denoted by $U\sim V$, when $U\triangle V:=(U\setminus V)\cup(V\setminus U)$ is a meager set $($\footnote{ Knowing that $A\triangle A=\emptyset$, $A\triangle A=B\triangle A$, and  $A\triangle C\subset (A\triangle B)\cup(B\triangle C)$, we get that $\sim$ is an equivalence relation.
}$)$.
We say that a given property $P$ is {\bf\em generic on $U\subset\XX$}, or {\bf\em $P$ holds for a residual set of points $x\in U$}, when $\{x\in U\,;\,P$ is not valid for $x\}$ is a meager set.
A map $f:\XX\circlearrowleft$ is called {\bf\em non-singular} if the pre-image of a meager set is also a meager set.

A continuous map $f:\XX\circlearrowleft$ is called {\bf\em transitive} if $\bigcup_{n\ge0}f^n(V)$ is dense in $\XX$ for every open set $V\subset\XX$.
If $\bigcup_{n\ge0}f^n(V)=\XX$ for every open set $V\subset\XX$, then $f$ is called {\bf\em strongly transitive}.
Given $U\subset\XX$ and $x\in\XX$, the {\bf\em frequency of visits} of $x$ to $U$ is defined as 
$$\tau_x(U)=\limsup_{n\to+\infty}\frac{1}{n}\#\left\{0\le j<n\,;\,f^j(x)\in U\right\}.$$

Our first result, Theorem~\ref{mainTheoTrans} below, shows that transitivity homogenizes the asymptotic behavior of Birkhoff averages for generic points, not only for continuous functions, but for all bounded measurable ones.

\begin{maintheorem}\label{mainTheoTrans}
Let $f:\XX\circlearrowleft$ be a non-singular continuous map.
If $f$ is transitive then, given a  Borel measurable bounded function $\varphi:\XX\to\RR$, there exists $\gamma\in\RR$  such that $$\limsup_{n\to+\infty}\frac{1}{n}\sum_{j=0}^{n-1}\varphi\circ f^{j}(x)=\gamma$$ for a residual set of points $x\in\XX$.
As a consequence, for each Borel set $U\subset\XX$, there exists $\theta\in[0,1]$  such that $\tau_x(U)=\theta$ for a residual set of points $x\in\XX$.
\end{maintheorem}

The {\bf\em basin of attraction} of a set $A\subset\XX$ is defined as $$\beta_f(A)=\{x\in\XX\,;\,\omega_f(x)\subset A\},$$
where $\omega_f(x)=\bigcap_{n\ge0}\overline{\co_f^+(f^n(x))}$ is the {\bf\em omega limit} of $x$ and $\co_f^+(x)=\{f^n(x)\,;\,n\ge0\}$ is the {\bf\em forward orbit} of a point $x\in\XX$.
Following Milnor's definition of topological attractor \cite{Mi}, a compact set $A$
is called  {\bf\em a topological attractor} if
$\beta_f({A})$ and $\beta_f(A)\setminus\beta_f(A')$ are not  meager sets for every compact set $A'\subsetneqq A$.

The {\bf\em support} of a Borel probability measure $\mu$, $\supp\mu$, is the set of all points $x\in\XX$ such that $\mu(B_{\varepsilon}(x))>0$ for every $\varepsilon>0$, where $B_{\varepsilon}(x)=\{y\in\XX\,;\,d(y,x)<\varepsilon\}$ is the {\bf\em open ball} of center $x$ and radius $\varepsilon$.

In Theorem~\ref{Theoremjhvuvpb} below, we use a measure-theoretic criterion to decompose the space into the union of the basins of attraction of a finite number of topological attractors.

\begin{maintheorem}\label{Theoremjhvuvpb}
Let $f:\XX\circlearrowleft$ be a non-singular continuous map and $\mu$ a Borel probability measure on $\XX$ with $\supp\mu=\XX$.
If $$\inf\left\{\mu\left(\bigcup_{n\ge0}\interior\big(f^n(B_{\varepsilon}(x))\big)\right)\,;\,x\in\XX\text{ and }\varepsilon>\right\}>0$$
 then there exists a finite collection of topological attractors $A_1,\cdots, A_\ell$ such that $$\beta_f(A_j)\cup\cdots\cup\beta_f(A_{\ell})\sim\XX.$$
Furthermore, the following statements are true for every $1\le j\le \ell$.
\begin{enumerate}
\item \label{itemugvigv11} $\omega_f(x)=A_j$ for a residual set of points $x\in\beta_f(A_j)$.
\item \label{itemugvigv22} If $\varphi:\XX\to\RR$ is a Borel measurable bounded function then there exists $\gamma_j\in\RR$ such that $$\limsup_{n\to+\infty}\frac{1}{n}\sum_{j=0}^{n-1}\varphi\circ f^j(x)=\gamma_j$$ for a residual set of points $x\in\beta_f(A_j)$.
\item \label{itemugvigv33} If $U$ is a Borel subset of $\XX$ then there exists $\theta_j\in[0,1]$ such that $$\tau_x(U)=\theta_j$$
for a residual set of points $x\in\beta_f(A_j)$.
\end{enumerate}
\end{maintheorem}

The {\bf\em nonwandering set}, $\Omega(f)$, of a map $f:\XX\circlearrowleft$ is the  set of all $x\in \XX$ such that $V\cap\bigcup_{n\ge1}f^n(V)\ne\emptyset$ for every open set $V$ containing $x$.
Denote the set of all periodic points of $f$ by $\per(f)$, i.e., $\per(f)=\{p\in\XX\,;\,p\in\co_f^+(p)\}\subset\Omega(f)$.
The map $f$ has {\bf\em sensitive dependence on initial condition} \cite{Gu} if there exists $r>0$ such that $\sup_{n}\diam(f^n(B_{\varepsilon}(x)))\ge r$ for every $x\in\XX$ and $\varepsilon>0$.
According to Ruelle \cite{Ru} and Takens \cite{Ta08}, a point $x\in\XX$ has {\bf\em historic behavior} when $$\limsup_{n\to+\infty}\frac{1}{n}\sum_{j=0}^{n-1}\varphi\circ f^j(x)>\liminf_{n\to+\infty}\frac{1}{n}\sum_{j=0}^{n-1}\varphi\circ f^j(x)$$ for some $\varphi\in C^0(\XX,\RR)$.

Now, using topological criteria, Theorem~\ref{TheoremFatErgodicAttractorsMAIN} ensures the decomposition of the space into the union of the basins of attraction of a finite number of topological attractors, allowing us to control many of the asymptotic aspects of generic points.

\begin{maintheorem}
\label{TheoremFatErgodicAttractorsMAIN}Let $f:\XX\circlearrowleft$ be a non-singular continuous map.
If there exists $\delta>0$ such that $\overline{\bigcup_{n\ge0}f^n(U)}$ contains some open ball of radius $\delta$, for every nonempty open set $U\subset\XX$, then there exists a finite collection of topological attractors $A_1,\cdots, A_\ell$ such that 
$$\beta_f(A_j)\cup\cdots\cup\beta_f(A_{\ell})\text{ contains an open and dense subset of }\XX,$$
and the following statements are true for every $1\le j\le \ell$.
\begin{enumerate}[\hspace{0.6cm}(i)]
\item The statements (\ref{itemugvigv11}),  (\ref{itemugvigv22}) and (\ref{itemugvigv33}) of Theorem~\ref{Theoremjhvuvpb}  remain valid.
\item  $f|_{A_j}$ is transitive.
\item $A_j=\overline{\interior(A_j)}$ and it contains an open ball of radius $\delta$. 
\item $\overline{\Omega(f)\setminus\bigcup_{j=0}^{\ell}A_j}$ is a compact set with empty interior.
\end{enumerate}
Furthermore, if $\bigcup_{n\ge0}f^n(U)$ contains some open ball of radius $\delta$, for every nonempty open set $U\subset\XX$, then the following statements are true.
\begin{enumerate}[\hspace{0.6cm}(i)]
\item[(v)]  For each $A_j$ there is a forward invariant set $\ca_j\subset A_j$, with $A_j=\overline{\interior(\ca_j)}$, such that $f|_{\ca_j}$ is strongly transitive.
\item[(vi)]  Either $\omega_f(x)=A_j$ for every $x\in\ca_j$ or $f\big|_{A_j}$ has sensitive dependence on initial conditions.
\item[(vii)]\label{itenkgujjjfycv} If $\ca_j$ contains more that one periodic orbit, then generically, the points of $\beta_f(A_j)$ have historic behavior. 
\end{enumerate}
\end{maintheorem}

Denote the set of all $f$ invariant Borel probability measures of $f$ by $\cm^1(f)$. Recall that $f$ is called {\bf\em uniquely ergodic} when $f$ has one and only one $f$ invariant Borel probability measure.
The {\bf\em statistical omega-limit} of a point $x\in\XX$, denoted by $\omega_f^{\star}(x)$, is the set of all points $y\in\XX$ such that $\tau_x(V)>0$ for every open set $V$ containing $y$.
If we are considering a metrical attractor $A$ supporting a SRB measure $\mu$ or, more in general, a physical measure, then one can expect  that $\omega_f(x)=A$ and $\omega_f^{\star}(x)=\supp\mu$ for almost every point in the basin of attraction of $A$. 
For instance, there are well known examples {\color{black} of $C^{\infty}$} circle maps such that $\omega_f(x)=S^1=\RR/\ZZ$  and $\omega_f^{\star}(x)=\{[0]\}$
for Lebesgue almost every $x\in S^1$, where $[0]$ is the fixed point of $f$ and $f'([0])=1$ (see Figure~\ref{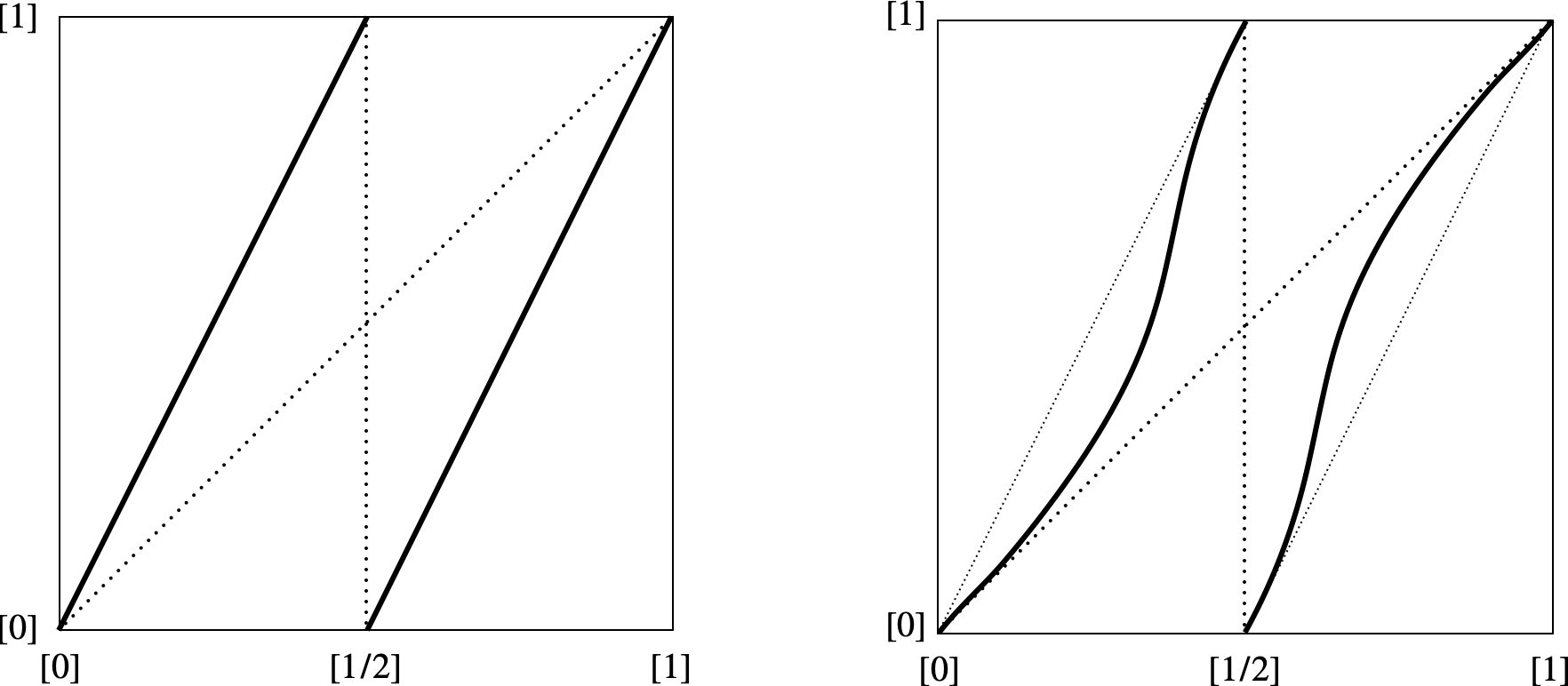}).
In this case, $\mu:=\delta_{[0]}$ is the physical measure for $f$.
According to Ilyashenko \cite{Ily}, while $S^1$ is the attractor for $f$, $A^*:=\{[0]\}$ is its {\em statistical attractor} (see Section~\ref{SectionSTATAT} for precise definitions).

\begin{figure}
\begin{center}\includegraphics[scale=.3]{Renom.png}
\caption{\small The picture on the left shows the graph of the double map $S^1\ni[x]\mapsto[2x]\in S^1$.
On the right side, the picture shows the graph of a  $C^{\infty}$ map $f:S^1\circlearrowleft$, $S^1:=\RR/\ZZ$, that is conjugated to the double map and such that $\omega_f(x)=S^1$ and $\omega_f^{\star}([x])=\{[0]\}$ for Lebesgue almost every $[x]\in S^1$.}\label{Renom.png}
\end{center}
\end{figure}

Theorem~\ref{TheoremUEorHB} below shows that the hypothesis of strong transitivity allows us to estimate the (upper) Birkhoff averages for continuous functions at generic points.

\begin{maintheorem}
\label{TheoremUEorHB}
If a continuous map $f:\XX\circlearrowleft$ is strongly transitive then the following statements are true. 
\begin{enumerate}
\item \label{itemhgn} Either $f$ is uniquely ergodic or, generically, the points of $\XX$ have historic behavior.
\item \label{itemdrt} For any continuous function  $\varphi:\XX\to\RR$, 
$$
  \limsup_{n\to+\infty}\frac{1}{n}\sum_{j=0}^{n-1}\varphi\circ f^j(x)=\max\bigg\{\int\varphi d\mu\,;\,\mu\in\cm^1(f)\bigg\}
$$ for a residual set of points $x\in\XX$.
\item \label{itemvgjmu} If $U\subset\XX$ is an open set then $$\sup\left\{\mu\left(\overline{U}\right)\,;\,\mu\in\cm^1(f)\right\}\ge\tau_x(U)\ge
\sup\left\{\mu\left(U\right)\,;\,\mu\in\cm^1(f)\right\}$$
for a residual set of points $x\in\XX$.
\item \label{itemvgg668mu} If $f$ is non-singular and $V\subset\XX$ is a Borel set then 
$$\sup\left\{\mu\left(\overline{U}\right)\,;\,\mu\in\cm^1(f)\right\}\ge\tau_x(V)\ge
\sup\left\{\mu\left(U\right)\,;\,\mu\in\cm^1(f)\right\}$$
for a residual set of points $x\in\XX$, where $U$ is any open set such that $U\sim V$.
\item \label{itemngyj46uu} $\omega_f^{\star}(x)=\overline{\bigcup_{\mu\in\cm^1(f)}\supp\mu}$ for a residual set of points $x\in\XX$.
\end{enumerate}
\end{maintheorem}

Although Theorem~\ref{TheoremUEorHB} above can be applied to some injective maps (for instance, transitive translations of a compact metrizable topological group), most of its applications are for endomorphisms. Because of that, we present below (Theorem~\ref{TheoremUEorHBinject}) a version of Theorem~\ref{TheoremUEorHB} better adapted to injective maps.
For that, given a map $f:\XX\circlearrowleft$, define the {\bf\em $f$-stable set} of $x\in\XX$ as $W_f^s(x)=\left\{y\in\XX\,;\,\lim_{j\to+\infty} d(f^j(x),f^j(y))=0\right\},$
and the {\bf\em pre-orbit} of a set $U\subset\XX$ as $\co_f^-(U)=\bigcup_{n\ge0}f^{-n}(U)$.
Since $f$ being strongly transitive means that $\overline{\co_f^-(\{x\})}=M$ for every $x\in\XX$, using stable sets we can weaken strong transitivity in the following way.
A continuous map $f:\XX\circlearrowleft$ is called {\bf\em strongly $u$-transitive} when $\overline{\co_f^-(W_f^s(x))}=\XX$ for every $x\in\XX$.
Of course that all strongly transitive maps are strongly $u$-transitive, since $x\in W_f^s(x)$.
On the other hand, a ``linear'' Anosov diffeomorphism \cite{Fr} and a non-transitive circle homeomorphism with irrational rotation number \cite{De} are examples of strongly $u$-transitive maps that are not strongly transitive.

\begin{maintheorem}
\label{TheoremUEorHBinject}
	Let $f:\XX\circlearrowleft$ be a continuous map.
If $f$  strongly $u$-transitive (in particular, if $\overline{W_f^s(x)}=\XX$ for all $x$) then all the enumerated statements of  Theorem~\ref{TheoremUEorHB} hold.
\end{maintheorem}

A {\em growing map} is a topological generalization of the non-uniformly expanding maps, but in a very weak way.
A non-singular continuous map $f:\XX\circlearrowleft$ is called a {\bf\em growing map} if there exists $\delta>0$ such that for each nonempty open set $V\subset\XX$ one can find $n\ge0$, $q\in\XX$ and a connected component $U\subset V$ of $f^{-n}(B_{\delta}(q))$ such that $f^n(U)=B_{\delta}(q)$.
We note that being a growing map does not depend on the metric, only on the topology.
That is, if $d_1$ and $d_2$ are two metrics inducing the same topology on $\XX$, then $f$ is a growing map with respect to $d_1$ if and only if it is a growing map with respect to $d_2$.
In particular, the property of being a growing map is preserved by topological conjugations.  

\begin{maintheorem}
\label{TheoremFatErgodicAttractors2MAIN}
If $f:\XX\circlearrowleft$ is a growing map then there exists a finite collection of topological attractors $A_1,\cdots, A_\ell$ such that 
$$\beta_f(A_j)\cup\cdots\cup\beta_f(A_{\ell})\text{ contains an open and dense subset of }\XX,$$
and following statements are true for every $1\le j\le \ell$.
\begin{enumerate}
\item The statements (i),(ii),(iii) and (iv) of Theorem~\ref{TheoremFatErgodicAttractorsMAIN} remain valid.
\item $\omega_f^{\star}(x)=A_j$ for a residual set of points $x\in\beta_f(A_j)$.
\item $h_{top}(f|_{A_j})>0$, i.e., the topological entropy of $f$ restrict to $A_j$ is positive.
\item There is a strongly transitive and forward invariant set $\ca_j\subset A_j=\overline{\interior(\ca_j)}$.
\item $f|_{\ca_j}$ has an uncountable set of ergodic invariant probability measures.
\item If $\varphi\in C(\XX,\RR)$ then  there exist constants $\gamma_+$ and $\gamma_-\in\RR$ such that 
$$
    \limsup_{n\to+\infty}\frac{1}{n}\sum_{j=0}^{n-1}\varphi\circ f^j(x)=\gamma_+\ge\sup\bigg\{\int\varphi d\mu\,;\,\mu\in\cm^1(f|_{\ca_j})\bigg\}\ge
$$
$$\ge\inf\bigg\{\int\varphi d\mu\,;\,\mu\in\cm^1(f|_{\ca_j})\bigg\}\ge\gamma_-=\liminf_{n\to+\infty}\frac{1}{n}\sum_{j=0}^{n-1}\varphi\circ f^j(x)$$ for a residual set of points $x\in\beta_f(A_j)$.
\end{enumerate}
Furthermore,
\begin{enumerate}
\setcounter{enumi}{6}
\item $f$ has sensitive dependence on initial conditions
\item Generically, the points of $\XX$ have historic behavior.
\item If $\XX$ is a compact manifold (possibly with boundary) then $\overline{\per(f)}\supset\bigcup_{j}A_j$.
\end{enumerate}

\end{maintheorem}

Theorem~\ref{mainThojhgf} below relates the support of physical measures with the topological attractors.
If $f:M\circlearrowleft$ is a homeomorphism on a Riemannian manifold $M$, an ergodic $f$-invariant probability measure $\mu$ is called a {\bf\em  physical measure} when its basin of attraction has positive Lebesgue measure.
The {\bf\em basin of attraction} of a measure $\mu\in\cm^1(M)$, denoted by $\beta_f(\mu)$, is the set of all $x\in M$ such that $\frac{1}{n}\sum_{j=0}^{n-1}\delta_{f^j(x)}$ converges to $\mu$ in the weak$^{\star}$ topology, see Section~\ref{Connections} for more details and related  results. 
 Given  $U\subset M$, define $W_f^s(U)=\bigcup_{x\in U}W_f^s(x)$. 

\begin{maintheorem}\label{mainThojhgf}
Let $M$ be a compact Riemannian manifold and $f:M\circlearrowleft$ a homeomorphism such that $W_f^s=\{W_f^s(x)\}_{x\in M}$ is a continuous foliation of $M$ $($\footnote{\label{footfolia}
 See Section~\ref{SectionCOntF} in Appendix.}$)$.
If there exists $\varepsilon>0$ such that $\leb(W_f^s(\bigcup_{n\ge0}f^n(U)))$ $\ge$ $\varepsilon$ for every nonempty open set $U\subset M$, then there exists a finite collection of topological attractors $A_1,\cdots,A_k$, with $1\le k\le\leb(M)/\varepsilon$,  such that
$$\beta_f(A_j)\cup\cdots\cup\beta_f(A_{k})\sim M$$
and $\omega_f(x)=A_j$ for a residual set of points $x\in\beta_f(A_j)$ and every $1\le j\le k$.

Furthermore, if $\mu$ is a physical measure for $f$ then $\supp\mu\subset A_j$ for some $1\le j\le k$ or $\beta_f(\mu)$ is a nowhere dense set.
\end{maintheorem}

We have chosen to present the main results in a less technical and more unified form.
Nevertheless, we observe that Theorem~\ref{mainTheoTrans} is also true for a large class of non compact spaces and unbounded function, see Theorem~\ref{Theoremkviduro}.
Moreover, Theorem~\ref{TheoremFatErgodicAttractorsMAIN} and Theorem~\ref{TheoremFatErgodicAttractors2MAIN} above are simplified (and less technical) versions of Theorem~\ref{TheoremFatErgodicAttractors} and \ref{TheoremFatErgodicAttractors2}. Indeed, Theorem~\ref{TheoremFatErgodicAttractors} and \ref{TheoremFatErgodicAttractors2} can be applied to maps with discontinuity when the closure of the set of all discontinuities has empty interior.
Furthermore, the results of Section~\ref{Connections}, used to prove Theorem~\ref{mainThojhgf}, show others connections between metrical and topological attractors.

We would like to thank Minkov,  Okunev, and Shilin, who noted in \cite{MOS} that in the earlier versions of this paper, the hypothesis of Theorem A in those older versions (which is included as part of Theorem C in the current version) was incomplete. They suggested adding the hypothesis that the map in question should be an open one and provided several interesting counterexamples for non-open maps in \cite{MOS}.
Indeed, the hypothesis was incomplete, and the natural hypothesis was the non-singularity of the map, which was already widely used in many results in the earlier versions of the paper. This hypothesis is much less restrictive than the hypothesis of the map being open.

\subsection{Organization of the text}
Section~\ref{SectionBaireErgodicity} is dedicated to the {\em Ergodic Formalism}, which comprises results analogous to those valid in the context of ergodic invariant probability measures (see, for instance, Proposition~\ref{Propositionkjuytf9tfg76}).
In this section we introduce the notion of {\em Baire ergodicity} and study its relation with transitivity and asymptotic transitivity.

In Section~\ref{SectionAttractors}, we relate Baire and $u$-Baire ergodicity with topological and statistical attractors.
In this section we provide some criteria for the existence of a finite Baire (or $u$-Baire) ergodic decomposition. 

In the last section, Section~\ref{SectionAplications}, we apply the results of the previous two sections to several examples of dynamical systems. Additionally, we prove all the theorems stated above.

\section{Topological $\times$ Baire ergodicity}\label{SectionBaireErgodicity}

A {\bf\em Baire space} $X$ is a topological space with the property that the intersection of any given countable collection of open dense sets is a dense set.
It is known that all complete metric spaces and all locally compact Hausdorff spaces are Baire spaces.
As commented before, a countable union of nowhere dense subsets of $X$ is said to be {\bf\em meager}; the complement of such a set is called a {\bf \em residual set} and it contains a countable intersection of open and dense sets.
If a set is not meager then it is called {\bf \em fat} $($\footnote{ The meager and fat sets also are called, respectively, first and second category sets.}$)$.
A set $V$ {\bf\em is residual in a set $U$} when $U\setminus V$ is a meager set.

A subset $A\subset X$ of a topological space $X$ is said to have the {\bf\em  Baire property} if there is an open set $U$ such that $A\triangle U$ is a meager set, i.e., $A\sim U$.
A set with the Baire property is also called {\bf  \em an almost open set}.
Hence, a set $A\subset X$ with the Baire property is fat if and only if $A\sim U$ for some nonempty  open set $U\subset X$.

\begin{Proposition}[Prop. 8.22, pp. 47 of \cite{KeBook}]\label{PropBorelBaire}
Let $X$ be a topological space.
	The class of subsets of $X$  having the Baire property is a $\sigma$-algebra on $X$. Indeed, it is the smallest $\sigma$-algebra containing all open sets and all meager sets. In particular, this $\sigma$-algebra contains the Borel $\sigma$-algebra.
\end{Proposition}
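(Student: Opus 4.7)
The plan is to verify the three $\sigma$-algebra axioms for the class $\mathcal{B}\mathcal{P}$ of Baire sets and then identify it as the smallest $\sigma$-algebra containing the open sets and the meager sets. Two generating families are free from the definition: every open set $U$ satisfies $U \triangle U = \emptyset$, and every meager $M$ satisfies $M \triangle \emptyset = M$, so both are Baire. It remains to show closure under complementation and countable unions.

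For the complement, suppose $A \triangle U$ is meager with $U$ open. Then $A^c \triangle U^c = A \triangle U$ is meager, but $U^c$ is closed, not open. The key topological fact is that the boundary $\partial U = \overline{U} \setminus U$ of an open set is closed with empty interior, hence nowhere dense, hence meager. Writing $U^c = \operatorname{int}(U^c) \cup \partial U$ and chaining symmetric differences gives
\[
A^c \triangle \operatorname{int}(U^c) \;\subset\; (A^c \triangle U^c) \cup (U^c \triangle \operatorname{int}(U^c)) \;\subset\; (A \triangle U) \cup \partial U,
\]
so $A^c$ is Baire, with witness $\operatorname{int}(U^c)$. Countable unions are handled by the set-theoretic inclusion
\[
\Bigl(\bigcup_n A_n\Bigr) \triangle \Bigl(\bigcup_n U_n\Bigr) \;\subset\; \bigcup_n (A_n \triangle U_n),
\]
which is meager whenever each $A_n$ is Baire with witness $U_n$, and $\bigcup_n U_n$ is open. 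Hence $\mathcal{B}\mathcal{P}$ is a $\sigma$-algebra.

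For the minimality statement, any $\sigma$-algebra $\Sigma$ on $X$ that contains every open set and every meager set automatically contains every Baire set: if $A \triangle U$ is meager, the identity $A = U \triangle (A \triangle U)$ expresses $A$ as the symmetric difference of two elements of $\Sigma$, so $A \in \Sigma$. The final sentence, that every Borel set is Baire, is then immediate: the Borel $\sigma$-algebra is by definition the smallest $\sigma$-algebra containing the open sets, and $\mathcal{B}\mathcal{P}$ is one such. The main (and only) delicacy in this proof is the complement step, where one must replace the closed $U^c$ by its interior; that move is justified entirely by the fact that the boundary of an open set is nowhere dense, after which everything reduces to standard symmetric-difference manipulations.
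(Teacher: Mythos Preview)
Your proof is correct. The paper does not actually prove this proposition; it is quoted verbatim from Kechris' book and used as a black box, so there is no ``paper's own proof'' to compare against. What you have written is the standard argument (and essentially the one given in Kechris): the only nontrivial step is closure under complements, handled exactly as you do by passing from the closed set $U^c$ to its interior at the cost of the nowhere dense boundary $\partial U$; the countable-union step and the minimality characterization are routine symmetric-difference bookkeeping.
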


When every element of a $\sigma$-algebra $\mathfrak{A}$ on a topological space $X$ has the Baire property, we say that $\mathfrak{A}$ {\bf\em  has the Baire property}.

A set $V\subset{X}$ is called {\bf \em invariant} if $f^{-1}(V)=V$ and it is called {\bf\em almost invariant} when $f^{-1}(V)\triangle V$ is meager.
If $f(V)\subset V$, then $V$ is called {\bf \em  forward invariant}.
Analogously, $V$ is called {\bf\em almost forward invariant} if $f(V)\setminus V$ is meager.

The natural way to define a {\bf\em topologically ergodic} map $f:X\circlearrowleft$ is that every invariant set is meager or residual.
A basic example of a topologically ergodic map on a compact space is a periodic orbit.
That is, a map $f:X\circlearrowleft$, where $X=\{p_1,\cdots,p_n\}$, $f(p_1)=p_2,\cdots,f(p_{n-1})=p_n$ and $f(p_n)=p_1$.
Although the definition above is  perfectly consistent, it follows from Proposition~\ref{Propositionjghjk4} below that essentially only singular maps can be topologically ergodic, with the exception, as in the example above, of spaces that have isolated points.

As observed in Section~\ref{SecStatOfMainsR}, the map $f$ is called {\bf\em topologically non-singular} or, for short, {\bf\em non-singular}, if the pre-image of a meager set is also a meager set.
The concept of non-singular maps is inspired by non-singular measure, that is, a measure on a space $X$ is  {\em $f$-non-singular} when $\mu(A)=0\implies\mu(f^{-1}(A))=0$ for every measurable set $A\subset X$. Note the all $f$-invariant measure are non-singular measures.
A non-singular measure, even if it is not invariant, has many ergodic properties (see Section~3 of \cite{Pi11}).
Similarly, non-singular continuous maps have many interesting topological (see Appendix) and ergodic properties (see, for instance, Theorem~\ref{TheoremProposi0ytd6881}).

If we are considering a metric space $(\YY,d)$, the {\bf\em open ball of radius $r\ge0$ and center $p\in\YY$} is given by $$B_r(p)=\{x\in\YY\,;\,d(x,p)<r\}.$$ Note that $B_0(p)=\emptyset$, $\forall\,p\in\YY$.

\begin{Proposition}\label{Propositionjghjk4}
A complete separable metric space $X$ without isolated points does not admit a topologically ergodic non-singular map $f:X\circlearrowleft$.
\end{Proposition}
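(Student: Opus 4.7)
The plan is a proof by contradiction: assuming such an $f$ exists, I will package the topological ergodicity hypothesis as a $\sigma$-complete non-principal ultrafilter on a set of cardinality at most $\mathfrak c$, which is ruled out in ZFC by Ulam's theorem.

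First, for each $p\in X$ I introduce the grand orbit
\[
\mathcal O(p)=\bigcup_{m,n\ge 0}f^{-m}\bigl(\{f^n(p)\}\bigr);
\]
a direct computation shows $f^{-1}(\mathcal O(p))=\mathcal O(p)$, so $\mathcal O(p)$ is invariant. Because $X$ has no isolated points, each singleton $\{q\}$ is closed with empty interior, hence meager; non-singularity then makes every $f^{-m}(\{q\})$ meager, so $\mathcal O(p)$, a countable union of meager sets, is itself meager. Writing $\mathcal F$ for the collection of all grand orbits I obtain a partition of $X$ into invariant meager sets, and since the Baire space $X=\bigcup\mathcal F$ is not meager, $\mathcal F$ must be uncountable.

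Next I exploit topological ergodicity: for every $\mathcal G\subset\mathcal F$ the union $\bigcup\mathcal G$ is invariant, so by hypothesis it is meager or residual. Setting
\[
\mathcal U=\bigl\{\mathcal G\subset\mathcal F\;:\;\bigcup\mathcal G\text{ is residual}\bigr\},
\]
I would check that $\mathcal U$ is a $\sigma$-complete non-principal ultrafilter on $\mathcal F$. The ultrafilter axiom follows from applying the dichotomy to the complementary pair $\bigcup\mathcal G$ and $\bigcup(\mathcal F\setminus\mathcal G)$, which are disjoint with union $X$: two disjoint residual sets cannot coexist in a Baire space, and two meager sets cannot cover one. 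Non-principality is immediate since every grand orbit is meager. The one real point is $\sigma$-completeness: for $\mathcal G_n\in\mathcal U$ one uses the elementary partition identity
\[
\bigcup\bigl(\bigcap_n\mathcal G_n\bigr)=\bigcap_n\bigl(\bigcup\mathcal G_n\bigr),
\]
valid because distinct grand orbits are disjoint, together with the Baire-space fact that countable intersections of residual sets are residual.

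Finally, since $X$ is separable metric one has $|\mathcal F|\le|X|\le\mathfrak c$, and by Ulam's theorem (equivalently, the fact that every measurable cardinal is a strong limit and so strictly exceeds $\mathfrak c$) no non-principal $\sigma$-complete ultrafilter can live on a set of cardinality at most $\mathfrak c$, contradicting the existence of $\mathcal U$. The main obstacle is the $\sigma$-completeness step, since it is the only place where both the partition structure of grand orbits and the Baire property of $X$ must be combined; the remaining pieces, including the translation into set-theoretic language and the invocation of Ulam, are essentially routine.
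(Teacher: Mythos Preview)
Your argument is correct and gives a genuinely different proof from the one in the paper. Both begin the same way: the grand orbits $\mathcal O(p)$ partition $X$ into invariant sets, and the hypotheses (no isolated points, non-singularity) force each of them to be meager. From that point the two proofs diverge. The paper uses the Axiom of Choice to pick a transversal $A$ (one point per grand orbit), argues that some forward iterate $f^m(A)$ must be fat, finds a point $p$ at which $f^m(A)$ is ``locally fat'', and then splits $f^m(A)$ by a small ball $B_r(p)$ into two fat pieces lying in distinct grand orbits; saturating each piece under the grand-orbit equivalence produces two disjoint invariant fat sets, contradicting topological ergodicity directly.

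Your route instead packages the meager/residual dichotomy on unions of grand orbits as a $\sigma$-complete non-principal ultrafilter on the index set $\mathcal F$, and then appeals to the ZFC fact that the least cardinal carrying such an ultrafilter is measurable, hence strongly inaccessible and in particular $>\mathfrak c\ge|\mathcal F|$. The paper's approach is more elementary and entirely self-contained---no large-cardinal machinery is invoked, only a concrete splitting. Yours is conceptually cleaner and exposes the set-theoretic content of the obstruction; it also shows, with no extra work, that the conclusion holds for any perfect Baire space whose cardinality lies below the first measurable cardinal, not just for Polish spaces.
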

\begin{proof} Let $f:X\circlearrowleft$ be a non-singular map defined on a complete separable metric space $X$ without isolated points.
Given $x\in X$ let the {\bf\em total orbit} of $x$ be defined as $$\co_f(x)=\{y\in X\,;\, f^n(y)=f^m(x)\text{ for some }n,m\ge0\}.$$

Let $\cu$ be the collection of all the total orbit of points of $X$.
Using the Axiom of Choice select for each $O\in\cu$ a single point $x_{_O}\in O$.
Let $A=\{x_{_O}\,;\,O\in\cu\}$.
Let $A_0=\bigcup_{j\ge0}f^j(A)$.
As $X=\bigcup_{n\ge0}f^{-n}(A_0)$ and $f$ is non-singular, $A_0$ must be a fat set.
Hence, there exists $m\ge0$ such that $f^m(A)$ is a fat set.
It can be seen that $f^m|_{A}$ is a bijection of $A$ with $f^m(A)$.

Let $X_0=\{x\in X\,;\,B_r(x)\cap f^m(A)\text{ is not a meager set for every }r>0\}$. 
As $X$ is separable, $X_0\ne\emptyset$. Otherwise, for each $x\in X$ there is $r_x>0$ such that $\,B_r(x)\cap f^m(A)$ is a meager set.
Choosing any countable subcover $\{B_{r_{x_n}}(x_n)\}_{n\in\NN}$ of $\{B_{r_{x}}(x)\}_{x\in X}$, we conclude that $f^m(A)=f^m(A)\cap\bigcup_{n\in\NN}B_{r_{x_n}}(x_n)$ is meager, a contradiction.

Let $p\in X_0$. As $X$ does not have isolated points we have that $f^m(A)\setminus\{p\}=\bigcup_{n\in\NN}\big(f^m(A)\setminus B_{\frac{1}{n}}(p)\big)$ is not a meager set.
Hence, there exists $r>0$ ($r=1/n$ for some $n\in\NN$) such that $f^m(A)\setminus B_{r}(p)$ is not a meager set.

Let $P=\bigcup_{n\ge0}f^{-n}(\bigcup_{j\ge0}f^j(f^m(A)\cap B_r(p)))$ and $Q=\bigcup_{n\ge0}f^{-n}(\bigcup_{j\ge0}f^j(f^m(A)\setminus B_r(p)))$.
 As $P$ and $Q$ are $f$-invariant fat sets and $P\cap Q=\emptyset$, we conclude that $f$ is not topologically ergodic.
\end{proof}

As a consequence of Proposition~\ref{Propositionjghjk4}, even an irrational rotation on the circle cannot be topologically ergodic.
Therefore, we weaken the definition of ergodicity by considering only measurable invariant sets having the Baire property. 

\begin{Definition}[Baire ergodic maps]
	Let $X$ be a Baire space and $\mathfrak{A}$ a $\sigma$-algebra on $X$ with the Baire property.
A $\mathfrak{A}$-measurable map $f:X\circlearrowleft$ is called {\bf \em Baire ergodic} if every invariant set $U\in\mathfrak{A}$ is either meager or residual.
\end{Definition}

Ergodicity and transitivity are notions related with the idea of a dynamical system being  indecomposable.
Therefore, it is not surprising that these two concepts are connected.
The parallel between transitivity and ergodicity was pointed out as early as the 1930s by J. C. Oxtoby \cite{Ox}, a few years after the Ergodic Theorem appeared.
Indeed, applying the ``Zero-one topological law'' to the group $(\{f^n\}_{n\in\ZZ},\circ)$, we can conclude that every transitive homeomorphism $f:X\circlearrowleft$ on a Baire space $X$ is Baire ergodic. 

\begin{Lemma}[Zero-one topological law, see Prop. 8.46, pp. 55 of \cite{KeBook}, see also \cite{GK}]
Let $X$ be a Baire space and $G$ a group of homeomorphism of $X$. Suppose that $X$ is $G$-transitive, that is, given a pair of open sets $A,B\subset X$, there is a $g\in G$ such that $g(A)\cap B\ne\emptyset$.
Let $U\subset X$ be a $G$-invariant set, i.e.,  $g(U)=U$ for every $g\in G$.
If $U$ has the Baire property then either $U$ or $X\setminus U$ is meager. 
\end{Lemma}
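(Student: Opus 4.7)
The plan is to use the Baire property of $U$ to replace it by an open approximant and then exploit $G$-transitivity to force that approximant to be dense. First I would fix an open $V\subset X$ with $U\triangle V$ meager, which exists by the definition of a Baire set. If $U$ is itself meager we are done, so I assume it is not; then $V$ must be nonempty, since otherwise $U=U\triangle V$ would be meager. My remaining goal is to prove that $X\setminus U$ is meager, and for this it suffices to show that $V$ is dense in $X$: once density is established, $X\setminus V$ is closed and nowhere dense, hence meager in the Baire space $X$, and the symmetric difference $(X\setminus U)\triangle(X\setminus V)=U\triangle V$ is meager, whence $X\setminus U$ is meager as well.

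The density of $V$ is where the hypotheses would come together. I would argue by contradiction: assume $W:=X\setminus\overline V$ is nonempty and apply the $G$-transitivity hypothesis to the pair of nonempty open sets $V$ and $W$ to obtain some $g\in G$ with $g(V)\cap W\ne\emptyset$. Since $g$ is a homeomorphism it carries meager sets to meager sets, so $g(U)\triangle g(V)=g(U\triangle V)$ is meager. Combining this with the $G$-invariance $g(U)=U$ and the meagerness of $U\triangle V$, I conclude that $V\triangle g(V)$ is meager. But the nonempty open set $g(V)\cap W$ is disjoint from $V$ and hence contained in $g(V)\setminus V\subset V\triangle g(V)$, which would make a nonempty open set meager in a Baire space, a contradiction. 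This contradiction forces $V$ to be dense and, by the reduction in the first paragraph, finishes the proof.

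I do not expect any significant obstacle; the whole argument is a short symmetric-difference manipulation, and the main conceptual point is that $G$ consists of homeomorphisms, which is exactly what lets me transport the approximation $U\sim V$ along each $g\in G$ via $g(U\triangle V)=g(U)\triangle g(V)$. Under a weaker hypothesis (say, a group of Baire-measurable non-singular transformations) this transport step would require extra justification, but with the stated hypothesis every step is routine.
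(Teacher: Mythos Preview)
Your argument is correct and is essentially the classical proof of this topological zero--one law (the one appearing, for instance, in Kechris's book). Note, however, that the paper does not supply its own proof of this lemma: it is stated with a citation to \cite{KeBook} and \cite{GK} and used as a black box, so there is no in-paper proof to compare against. Your write-up would serve perfectly well as a self-contained justification.
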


\begin{Corollary}
\label{HomeoErgEquivTras}
 If $f:X\circlearrowleft$ is a homeomorphism defined on a Baire space $X$ then $f$ is Baire ergodic if and only if $f$ is transitive.
\end{Corollary}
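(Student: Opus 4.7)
The equivalence has two directions, of which the first is immediate from the preceding Zero-one topological law while the second requires a refinement argument.

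\textbf{(Transitive $\Rightarrow$ Baire ergodic.)} I would invoke the Zero-one topological law for the group $G=\{f^n\}_{n\in\ZZ}$ of homeomorphisms of $\XX$. Transitivity of $f$ supplies $G$-transitivity in the required sense: given nonempty open $A,B\subset\XX$, transitivity yields some $n\geq 0$ with $f^n(A)\cap B\neq\emptyset$, and $f^n\in G$. Since $f$ is a bijection, $f$-invariance coincides with $G$-invariance (a set $V$ with $f^{-1}(V)=V$ automatically satisfies $f^n(V)=V$ for every $n\in\ZZ$). The law then guarantees that every $f$-invariant Baire set is meager or residual, i.e.\ $f$ is Baire ergodic.

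\textbf{(Baire ergodic $\Rightarrow$ transitive.)} First, for any nonempty open $V\subset\XX$ the set $\widetilde V=\bigcup_{n\in\ZZ}f^n(V)$ is open, $f$-invariant (because $f$ is a homeomorphism) and nonempty, so by Baire ergodicity $\widetilde V$ is residual and in particular dense. To upgrade this to forward transitivity I argue by contradiction: suppose there exist nonempty open $V,W\subset\XX$ with $f^n(V)\cap W=\emptyset$ for every $n\geq0$. The plan is to exhibit an open, nonempty, $f$-invariant set that misses a nonempty open set, violating Baire ergodicity.

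To this end, I aim to refine $V,W$ to nonempty open $V_0\subset V$ and $W_0\subset W$ with $f^n(V_0)\cap W_0=\emptyset$ for \emph{every} $n\in\ZZ$; the ``$n\geq 0$'' half is automatic from the inclusions, so the condition that remains is $f^m(W_0)\cap V_0=\emptyset$ for $m\geq1$, namely that the forward orbit of $W_0$ avoids $V_0$. Once such a pair is obtained, $\bigcup_{n\in\ZZ}f^n(V_0)$ is open, nonempty, $f$-invariant and disjoint from the nonempty open $W_0$, so it is neither residual nor meager, contradicting Baire ergodicity.

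The main obstacle is producing the refined pair $V_0,W_0$ in general. My plan is a two-case analysis. If the forward orbit $P=\bigcup_{m\geq1}f^m(W)$ is not dense in $V$, I take $V_0=V\setminus\overline{P}$ and $W_0=W$; then $f^m(W_0)\cap V_0\subset P\cap V_0=\emptyset$ for all $m\geq1$, as required. If instead $V\subset\overline{P}$, I apply Baire category inside $W$ to the countable family $\{W\cap f^{-m}(V')\}_{m\geq1}$ for a preselected nonempty open $V'\subset V$: one of these open sets is non-meager (hence contains a nonempty open subset), producing $m_0\geq1$ and a nonempty open $W'\subset W$ with $f^{m_0}(W')\subset V'$. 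I then choose $V_0\subset V$ nonempty open disjoint from $\overline{f^{m_0}(W_0)}$ for a suitable open $W_0\subset W'$ (using the homeomorphism property of $f^{m_0}$), iterating this refinement if further stray intersections $f^m(W_0)\cap V_0$ with $m\neq m_0$ persist. The crux is that the process must terminate, because a hypothetical infinite sequence of failing refinements would itself exhibit an open invariant set that is neither dense nor meager, already contradicting Baire ergodicity at some intermediate stage.
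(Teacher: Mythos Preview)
Your first direction (transitive $\Rightarrow$ Baire ergodic) is fine and matches the paper. The second direction, however, has a genuine gap: the iterative refinement in Case~2 is not shown to terminate. Your claim that ``a hypothetical infinite sequence of failing refinements would itself exhibit an open invariant set that is neither dense nor meager'' is asserted but never argued; there is no mechanism in your outline that produces such a set from the failure of the refinement process. As it stands, Case~2 is open-ended.

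More to the point, the whole refinement machinery is unnecessary, and you came within one line of the clean argument. You already observed that for any nonempty open $V$ the set $\widetilde V=\bigcup_{n\in\ZZ}f^n(V)$ is open, invariant, and hence residual by Baire ergodicity. Do the same for $W$: then $\widetilde V$ and $\widetilde W$ are both residual in a Baire space, so $\widetilde V\cap\widetilde W\neq\emptyset$, giving $f^n(V)\cap f^m(W)\neq\emptyset$ for some $n,m\in\ZZ$. Now simply apply the homeomorphism $f^{-n}$ (say $n\le m$): one obtains $V\cap f^{m-n}(W)\neq\emptyset$ with $m-n\ge 0$, which is exactly forward transitivity since $V,W$ were arbitrary. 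This is the paper's proof, and it sidesteps entirely the problem of ``upgrading'' two-sided to one-sided transitivity by a refinement; the shift by $f^{-n}$ does that upgrade in one stroke.
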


In contrast with the topological ergodicity, it follows from  Corollary~\ref{HomeoErgEquivTras}  that all irrational rotation on the circle are Baire ergodic maps. As we can see below, continuity and transitivity is enough to ensure ergodicity for non-singular maps. 

\begin{Lemma}
	\label{Lemma98uyhji}
Let $X$ is a Baire space and consider the Borel $\sigma$-algebra on $X$.
If a non-singular map $f:X\circlearrowleft$ is continuous and transitive then $f$ is Baire ergodic. 
\end{Lemma}
\begin{proof}
Let $V\subset X$ be a fat invariant Borel set.
Let $A\subset X$ be an open set such that $V\sim A$, that is, $V\triangle A$ is a meager set.
Since $f^{-n}(V)\triangle f^{-n}(A)=f^{-n}(V\triangle A)$ and $f$ is non-singular, we also get that $f^{-n}(V)\sim f^{-n}(A)$ $\forall n\ge0$.
As $f$ is continuous and transitive $\bigcup_{j\ge0}f^{-j}(A)$ is open and dense in $X$, i.e., $\bigcup_{j\ge0}f^{-j}(A)\sim X$.
Thus, $V=\bigcup_{j\ge0}f^{-j}(V)\sim \bigcup_{j\ge0}f^{-j}(A)\sim X$, proving that $V$ is a residual set of $X$.
\end{proof}

Despite the connection given by Corollary~\ref{HomeoErgEquivTras} and Lemma~\ref{Lemma98uyhji}, in general, an ergodic map can be far from being transitive and a trivial example of such a map is a constant one, i.e., $f:X\circlearrowleft$, with $\# X>1$, such that $f(x)=p$ for some $p\in X$.
However, since constant maps are singular maps, one might ask whether there are non-singular Baire ergodic maps that are not transitive.
The answer again is yes, as we can see in Example~\ref{Unimodal} below. 
Indeed, we need to relax the definition of transitivity to obtain a concept that is closer to ergodicity. 

\begin{Definition}[Asymptotically transitive maps]Let $X$ be a topological space.
A map $f:X\circlearrowleft$ is called {\bf\em asymptotically transitive} if 
$$
\bigg(\bigcup_{j\ge0}f^j(A)\bigg) \cap \bigg(\bigcup_{j\ge0}f^j(B)\bigg)\,\text{ is a fat set}$$ for every nonempty open sets $A$ and $B\subset X$.
\end{Definition}

\begin{Theorem}
\label{TheoremProposi0ytd6881}
Let $\XX$ be a Baire metric space $\XX$ and consider the Borel $\sigma$-algebra on $\XX$.
If $f:\XX\circlearrowleft$ is a non-singular continuous map, then $f$ is Baire ergodic if and only if $f$ is asymptotically transitive.
\end{Theorem}
As some preliminary results are required, we leave the proof of Theorem~\ref{TheoremProposi0ytd6881} above for the end of Section~\ref{SectioNonSing}.
Theorem~\ref{TheoremProposi0ytd6881} can be used to provide  examples of non trivial maps that are Baire ergodic but not transitive. 

\begin{Example}[A non-singular Baire ergodic map that is not transitive]\label{Unimodal}
The maps of the Logistic family $\{f_t\}_{0<t\le1}$, where $f_t(x)=4 t x(1-x)$ (Figure~\ref{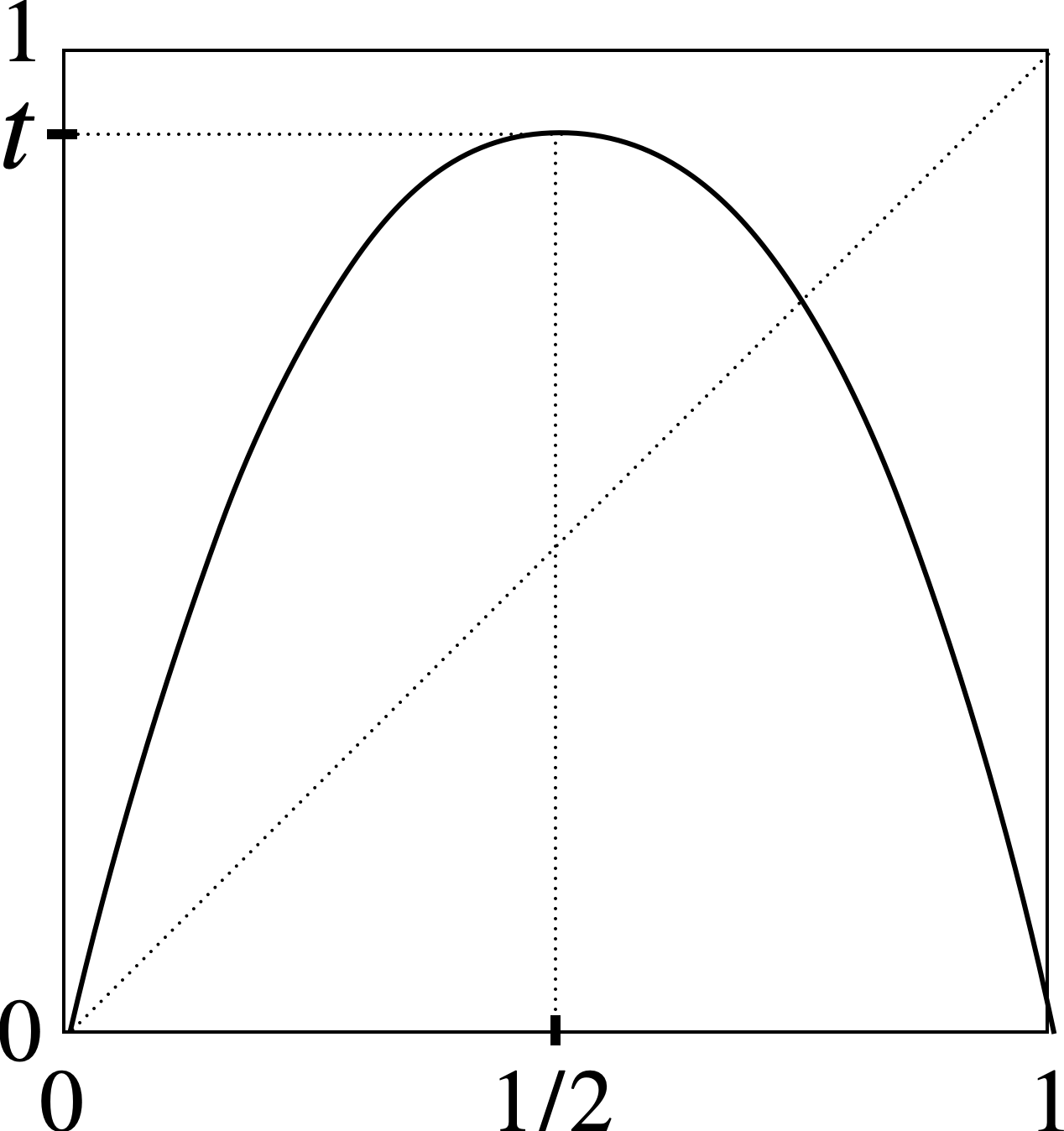}), are classical examples of non-flat $S$-unimodal maps.
\begin{figure}
\begin{center}\includegraphics[scale=.08]{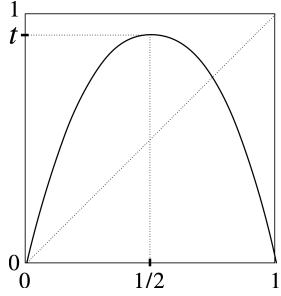}
\caption{The Logistic map $f_t$ with parameter $0<t\le1$.}\label{Logistic.png}
\end{center}
\end{figure}
By \cite{Gu} (see also \cite{MS89}), a non-flat $S$-unimodal map does not have  wandering intervals, that is, a strongly wandering domain for interval maps $($\footnote{An open set $A\subset X$ is called a {\bf\em strongly wandering domain} for a map $f$ if $f^n|_A$ is a homeomorphism of $A$ with $f^n(A)$, for every $n\ge1$, and $f^n(A)\cap f^m(A)=\emptyset$ for every $m>n\ge0$.
If $X$ is a Baire space that is perfect and Hausdorff, then the existence of a strongly wandering domain is an obstruction for a map to be asymptotically transitive and so, Baire ergodic. 
Indeed, as $X$ is perfect and a Hausdorff space, every open set $A$ contains open disjoint subsets $A_0$ and $A_1$.
Thus, if $A$ is a strongly wandering domain, then $\big(\bigcup_{n\ge0}f^n(A_0)\big)\cap \big(\bigcup_{n\ge0}f^n(A_1)\big)=\emptyset$, proving that $f$ is not asymptotically transitive. 
}$)$.
When $f_t$ is a $\infty$-renormalizable map (see for instance \cite{MvS} for the definition), $\interior(\omega_{f_t}(x))=\emptyset$ $\forall\,x\in[0,1]$ and so, $f_t$ cannot be transitive, since transitivity implies the existence of dense orbits $($\footnote{ The equivalence between transitivity and the existence of dense forward orbits is well known (see for instance Proposition~11.4 of \cite{ManeLivro}).}$)$.
Nevertheless being $\infty$-renormalizable implies that $\bigcup_{n\ge0}{f_t}^n(A)$ always contains an open neighborhood of the point $1/2$ for every nonempty open set $A\subset[0,1]$.
Thus, one can use this fact and Theorem~\ref{TheoremProposi0ytd6881} to conclude that $f_t$ is Baire ergodic.
\end{Example}

Proposition~\ref{Propositionkjuytf9tfg76} below provides an example of a {\em Ergodic Formalism result}, that is, a result that has a metric analog for ergodic maps with respect to invariant probability measures.
Indeed, one can find in most introductory books of Ergodic Theory a version, for invariant probability measures, of Proposition~\ref{Propositionkjuytf9tfg76} below (see for instance Proposition~2.1 of \cite{ManeLivro}, Proposition~4.1.3 of \cite{OV} or Theorem~1.6 of \cite{Wa}).
To state and prove Proposition~\ref{Propositionkjuytf9tfg76}, we need to introduce some definitions and notations.

 Let $X$ be a Baire space and $\mathfrak{A}$ a $\sigma$-algebra on $X$ with the Baire property.
A {\bf\em Baire potential} on $X$ is a measurable map defined on $X$ and assuming values on a complete separable metric space, i.e., $\varphi$ is a Baire potential on $X$ if $\varphi:(X,\mathfrak{A})\to(\YY,\mathfrak{B})$ is a measurable map for some complete separable metric space $\YY$ and $\mathfrak{B}$ is the Borel $\sigma$-algebra on $\YY$.
Define the {\bf\em image-support of $\varphi$} as $$\im\supp\varphi=\{y\in\YY\,;\,\varphi^{-1}(B_{\varepsilon}(y))\text{ is not meager for every }\varepsilon>0\}.$$

\begin{Lemma}\label{Lemmakytf7634erbv}
$\im\supp\varphi\ne\emptyset$ for every Baire potential $\varphi$ defined on a Baire space $X$.
\end{Lemma}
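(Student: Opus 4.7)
The plan is to build a Cauchy sequence in a countable dense subset of $\YY$ whose terms center open balls with non-meager $\varphi$-preimages, and to verify that its limit lies in $\im\supp\varphi$.

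First I would record the key background fact that a nonempty Baire space $X$ is non-meager in itself: if $X=\bigcup_{n}N_{n}$ with each $N_{n}$ nowhere dense, then $\bigcap_{n}(X\setminus\overline{N_{n}})$ is empty, contradicting that it is the intersection of a countable collection of open dense subsets of a Baire space. Fix a countable dense subset $D=\{z_{k}\}_{k\in\NN}$ of $\YY$, which exists by separability. For every $r>0$ the balls $\{B_{r}(z_{k})\}_{k\in\NN}$ cover $\YY$, so $X=\bigcup_{k}\varphi^{-1}(B_{r}(z_{k}))$, and since a countable union of meager sets is meager, at least one $\varphi^{-1}(B_{r}(z_{k}))$ is non-meager.

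I would then construct inductively, for $n\ge 1$, points $y_{n}\in D$ with $r_{n}=2^{-n}$ such that $\varphi^{-1}(B_{r_{n}}(y_{n}))$ is non-meager and $B_{r_{n+1}}(y_{n+1})\cap B_{r_{n}}(y_{n})\ne\emptyset$. The base case applies the previous observation at $r=r_{1}$. For the inductive step, density of $D$ gives the countable cover
$$B_{r_{n}}(y_{n})\subset\bigcup\{B_{r_{n+1}}(z_{k})\,:\,z_{k}\in D,\; B_{r_{n+1}}(z_{k})\cap B_{r_{n}}(y_{n})\ne\emptyset\};$$
pulling back and using that $\varphi^{-1}(B_{r_{n}}(y_{n}))$ is non-meager, at least one $z_{k}$ in this cover has $\varphi^{-1}(B_{r_{n+1}}(z_{k}))$ non-meager. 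I set $y_{n+1}$ equal to this $z_{k}$. The triangle inequality then yields $d(y_{n},y_{n+1})<r_{n}+r_{n+1}=3\cdot 2^{-n-1}$, so $\{y_{n}\}$ is Cauchy in the complete metric space $\YY$, converging to some $y^{*}\in\YY$.

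To finish, given any $\varepsilon>0$ I would choose $n$ so large that both $r_{n}<\varepsilon/2$ and $d(y_{n},y^{*})<\varepsilon/2$ hold; then $B_{r_{n}}(y_{n})\subset B_{\varepsilon}(y^{*})$, hence $\varphi^{-1}(B_{\varepsilon}(y^{*}))\supset\varphi^{-1}(B_{r_{n}}(y_{n}))$ is non-meager, proving $y^{*}\in\im\supp\varphi$. The only subtle point is the inductive step, where I must arrange \emph{simultaneously} that the new ball have non-meager preimage and that its center be close enough to $y_{n}$ to ensure the Cauchy property; routing the choice of $y_{n+1}$ through a cover of $B_{r_{n}}(y_{n})$ by balls of radius $r_{n+1}$ centered on points of $D$ takes care of both requirements at once.
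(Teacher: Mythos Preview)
Your argument is correct, but it takes a genuinely different route from the paper. The paper argues by contradiction: if $\im\supp\varphi=\emptyset$, then every $y\in\YY$ has some ball $B_{r_y}(y)$ with meager $\varphi$-preimage; separability (via the Lindel\"of property) yields a countable subcover $\{B_{r_{y_n}}(y_n)\}$ of $\YY$, whence $X=\bigcup_n\varphi^{-1}(B_{r_{y_n}}(y_n))$ is meager, contradicting that $X$ is a Baire space. This is a one-line covering argument and, notably, uses only separability of $\YY$, not completeness. Your approach is constructive: you manufacture a specific point $y^*\in\im\supp\varphi$ by nesting balls with non-meager preimage and invoking completeness of $\YY$ to pass to a limit. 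The payoff of your argument is an explicit witness rather than a contradiction, at the cost of a longer induction and an essential use of the completeness hypothesis that the paper's proof does not need.
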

\begin{proof}
If $\im\supp\varphi=\emptyset$ then, for every $y\in\YY$ there exists $r_y>0$ such that $\varphi^{-1}(B_{r_y}(y))$ is meager.
As $\YY$ is a separable metric space and $\bigcup_{y\in\YY}B_{r_y}(y)=\YY$, there exists a countable set $C=\{y_1,y_2,y_3,\cdots\}$ such that $\bigcup_{n\in\NN}B_{r_{y_n}}(y_n)=\YY$.
Thus, $X=\varphi^{-1}(\YY)=\bigcup_{n\in\NN}\varphi^{-1}(B_{r_{y_n}}(y_n))$ which is a contradiction as $\bigcup_{n\in\NN}\varphi^{-1}(B_{r_{y_n}}(y_n))$ is a meager set (\footnote{ Here we are using the fact that $X$ is a Baire space. Otherwise $X$ itself can be a meager set.
For instance, the rational numbers $\QQ=\bigcup_{q\in\QQ}\{q\}$ is a meager metric space with the usual distance.}).
\end{proof}

We say that $\varphi:X\to Y$ is  {\bf\em almost invariant} (with respect to $f:X\circlearrowleft$) if there exists an invariant residual set $U\in\mathfrak{A}$ such that $\varphi(x)=\varphi\circ f(x)$ for every $x\in U$.
Similarly, $\varphi$ is {\bf\em almost constant} if there exist  $y_0\in Y$ an invariant residual set $U\in\mathfrak{A}$ such that $\varphi(x)=y_0$ for every $x\in U$.

Given a set $A\subset X$, let $\mathbb{1}_A$ be the characteristic function of $A$, that is, $$\mathbb{1}_A(x)=
\begin{cases}
1 & \text{ if }x\in A\\
0 & \text{ if }x\notin A
\end{cases}.$$

\begin{Proposition}\label{Propositionkjuytf9tfg76}
Let $X$ be a Baire space and $\mathfrak{A}$ a $\sigma$-algebra on $X$ with the Baire property.  If $f:X\circlearrowleft$ is a measurable map then the following statements are equivalent: 
\begin{enumerate}
\item $f$ is Baire ergodic.
\item Every almost invariant Baire potential on $X$ is almost constant.
\item ${X}\ni x\mapsto\limsup_{n}\frac{1}{n}\sum_{j=0}^{n-1}\varphi\circ f^{j}(x)$  is almost constant for every measurable function $\varphi:X\to\RR$ such that $\limsup_{n}\frac{1}{n}\sum_{j=0}^{n-1}\varphi\circ f^{j}(x)\in\RR$ for every $x\in X$.
\item $X\ni x\mapsto\limsup_{n\to+\infty}\frac{1}{n}\#\{0\le j<n\,;\,f^{j}(x)\in A\}$ is almost constant \,$\forall\,A\in\mathfrak{A}$.
\end{enumerate}
\end{Proposition}
\begin{proof}
(1)$\implies$(2). Suppose that $f$ is Baire ergodic, $\YY$ a complete separable metric space and $\varphi:X\to\YY$ an almost invariant measurable map.
It follows from Lemma~\ref{Lemmakytf7634erbv} that $\im\supp\varphi\ne\emptyset$.

As $\varphi$ is an almost invariant potential, let $U\in\mathfrak{A}$ be a $f$ invariant residual set  such that $\varphi\circ f(x)=\varphi(x)$ for every $x\in U$.
Choose any $p\in\im\supp\varphi$.
Set $B_n:=\varphi^{-1}(B_{1/n}(p))$, $\triangle_n=\varphi^{-1}(\YY\setminus B_{1/n}(p))$, $B_n'=B_n\cap U$ and $\triangle_n'=\triangle_n\cap U$, where $n\in\NN$.
Note that $B_n'$ and $\triangle_n'$ are $f$-invariant sets and $B_n'\cap\triangle_n'=\emptyset$ for every $n\in\NN$.
As $p\in\im\supp\varphi$, $B_n'$ is a fat set and so, it follows from the ergodicity of $f$ that $\triangle_n'$ is meager for every $n\in\NN$.
Hence, $U\setminus\varphi^{-1}(p)=U\cap\varphi^{-1}(\YY\setminus\{p\})=\bigcup_{n\in\NN}\triangle_n'$ is a meager set, proving that $U\cap\varphi^{-1}(p)$ is a residual set.
That is, $\varphi(x)=p$ for a residual set of points $x\in X$.

(2)$\implies$(3) Let $\psi:X\to\RR$ be given by $\psi(x)=\limsup_{n\to\infty}\frac{1}{n}\sum_{j=0}^{n-1}\varphi\circ f^j(x)$.
The measurability of $\psi$ follows from the measurability of $\varphi$.
Thus, (3) follows from (2) and the fact that $\psi\circ f(x)=\psi(x)$ for every $x\in X$.
Indeed, $$\psi(f(x))=\limsup_{n\to+\infty}\frac{1}{n}\sum_{j=0}^{n-1}\varphi\circ f^{j}(f(x))=$$
$$=\limsup_{n\to+\infty}\bigg(\underbrace{\frac{n+1}{n}}_{\hspace{0.5cm}\to 1}\bigg(\frac{1}{n+1}\sum_{j=0}^{n}\varphi\circ f^{j}(x)\bigg)-\underbrace{\frac{1}{n}\varphi(x)}_{\hspace{0.5cm}\to0}\bigg)=\psi(x).$$

(3)$\implies$(4) Noting that
$\mathbb{1}_A$
 is a measurable map and $\frac{1}{n}\#\{0\le j<n\,;\,f^j(x)\in A\}=\frac{1}{n}\sum_{j=0}^{n-1}\mathbb{1}_A\circ f^j(x)$, we have (4) as a direct consequence of (3).

(4)$\implies$(1) Let $A\in\mathfrak{A}$ be such that $f^{-1}(A)=A$.
Set $\psi(x)=\limsup_n\frac{1}{n}\#\{0\le j<n\,;\,f^j(x)\in A\}$. As $A$ is $f$-invariant,
$$\psi(x)=
\begin{cases}
1 & \text{ if }x\in A\\
0 & \text{ if }x\notin A
\end{cases}.$$
It follows from (4) that there exists a residual set $U\in\mathfrak{A}$ such that either $\psi(x)=1$ for every $x\in U$ or $\psi(x)=0$ for every $x\in U$.
The first case implies that $U\subset A$ and so $A$ is residual. The second case implies that $U\subset X\setminus A$ and so, $A$ is meager.
Thus, every measurable invariant set $A$ is either residual or meager, proving (1). 
\end{proof}

\subsection{Ergodicity for non-singular maps}\label{SectioNonSing}
In this section (Section~\ref{SectioNonSing}), let  $X$ be a Baire space, $\mathfrak{A}$ a $\sigma$-algebra on $X$ with the Baire property.

\begin{Lemma}\label{Lemmagdigi00fi75r8}Let $Y\in\mathfrak{A}$ be a residual subset of $X$ and $f:Y\to X$ a non-singular measurable map.
If $U\subset{Y}$ is a fat almost invariant measurable set then $$U':=\bigcup_{n\ge0}f^{-n}\left(\bigcap_{j\ge0}f^{-j}(U)\right)$$ is a fat invariant measurable set with $U'\sim U$ .
\end{Lemma}
\begin{proof}
As $f$ is non-singular, $f^{-n}(U\triangle f^{-1}(U))$ is a meager set $\forall\,n\ge0$.
This implies that  $U\triangle f^{-j}(U)\subset (U\triangle f^{-1}(U))\cup\cdots\cup(f^{-(j-1)}(U)\triangle f^{-j}(U))=\bigcup_{n=0}^{j-1}f^{-n}(U\triangle f^{-1}(U))$ is also a meager set.
That is, $U\sim f^{-j}(U)$ for every $j\ge0$ and, as a consequence,  $U\sim U_0:=\bigcap_{j\ge0}f^{-j}(U)\subset U$.
So, $U_0$ is a fat measurable set and $f^{-1}(U_0)$ $=$ $f^{-1}\big(\bigcap_{j\ge0}f^{-j}(U)\big)$ $=$ $\bigcap_{j\ge1}f^{-j}(U)$ $\supset$ $U_0$.
Hence, $U'=\bigcup_{j\ge0}f^{-j}(U_0)$ is a fat measurable set and, as  $U_0\cup f^{-1}(U_0)=f^{-1}(U_0)$, we get that $$f^{-1}(U')=f^{-1}\bigg(\bigcup_{j\ge0}f^{-j}(U_0)\bigg)=\bigcup_{j\ge1}f^{-j}(U_0)=\bigcup_{j\ge0}f^{-j}(U_0)=U'.$$
Since $U_0\triangle U=\big(\bigcap_{j\ge0}f^{-j}(U)\big)\triangle U\subset\bigcap_{j\ge0} f^{-j}(U)\triangle U\subset f^{-1}(U)\triangle U$, we have that $U_0\triangle U$ is a meager set.
Moreover, as $f^{-j}(U)\triangle f^{-j}(U_0)= f^{-j}(U\triangle U_0)$ is meager, we have that $f^{-j}(U_0)\sim f^{-j}(U)\sim U$ for every $j\ge0$.
As a consequence, $U'\triangle U\subset\bigcup_{j\ge0}(f^{-j}(U_0)\triangle U)$ is a meager set, that is, $U'\sim U$.
\end{proof}

\begin{Corollary}\label{Corollaryiouregh}
A non-singular measurable map $f:X\circlearrowleft$ is Baire ergodic if and only if every almost invariant measurable set is either meager or residual.
\end{Corollary}
\begin{proof}
As an invariant set is an almost invariant one, we need only to show that if $f$ is Baire ergodic then every almost invariant measurable set is either meager or residual.
Suppose that $U\sim f^{-1}(U)$ is a fat measurable set.
It follows Lemma~\ref{Lemmagdigi00fi75r8} above that  $U'=\bigcup_{n\ge0}f^{-n}\big(\bigcap_{j\ge0}f^{-j}(U)\big)$ is a fat invariant measurable set with $U'\sim U$.
Thus, by the Baire ergodicity, $U\sim U'\sim{X}$, proving that $U$ is a residual set. 
\end{proof}

For non-singular maps, we can use Corollary~\ref{CorCompactErg} below to characterize Baire ergodicity in terms of open or closed invariant sets. 

\begin{Corollary}
\label{CorCompactErg}
If $f:{X}\circlearrowleft$  is a non-singular measurable map then the following statements are equivalent.
\begin{enumerate}
\item $f$ is Baire ergodic.
\item Every almost invariant nonempty open set is dense in ${X}$.
\item ${X}$ is the unique closed almost invariant set without empty interior.
\end{enumerate}
\end{Corollary}
\begin{proof} Since {\em (2)}$\iff${\em (3)} and, by Corollary~\ref{Corollaryiouregh}, {\em (1)}$\implies${\em (2)}, we need only to show that {\em (2)}$\implies${\em (1)}.
For that, suppose that $U$ is a measurable invariant fat set.
Let $A$ be an open set meager equivalent to $U$, i.e., $A\sim U$.
Since $f$ is non-singular, $A\sim U$ $\implies$ $f^{-1}(A)\sim f^{-1}(U)\sim U\sim A$, proving that $A$ is an almost invariant nonempty open set. 
Thus, it follows from {\em (2)} that $A$ is and dense in ${X}$.
As a consequence $U\sim A\sim{X}$, proving that $f$ is Baire ergodic.
\end{proof}

\begin{proof}[\bf Proof of Theorem~\ref{TheoremProposi0ytd6881}]
Suppose that $f$ is Baire ergodic and consider two nonempty open sets  $A,B\subset\XX$.
As $\AA:=\bigcup_{j\in\ZZ}f^j(A)$ and is an invariant subset of $\XX$ and, by Lemma~\ref{LemmaDoInterior} in Appendix, $$\AA\sim\AA_0:=\bigcup_{j\in\ZZ}\interior(f^j(A))\supset A,$$ we have that $\AA_0$ is an almost invariant open set.
Indeed, as $f$ is non-singular, $\AA\sim\AA_0$ $\implies$ $f^{-1}(\AA)\sim f^{-1}(\AA_0)$ and so, $f^{-1}(\AA_0)\sim f^{-1}(\AA)=\AA\sim\AA_0$.
Thus, by Corollary~\ref{CorCompactErg}, $\AA_0$ is an open and dense set, proving that $V:=\interior(f^n(A))\cap B$ is a nonempty open set for some $n\in\ZZ$.

Let $m\ge0$ be so that $m+n\ge 0$.
By Lemma~\ref{LemmaDoInterior} in Appendix, $\interior f^m(V)\ne\emptyset$ and so,
$$\left(\bigcup_{j\ge0}f^j(A)\right)\cap\left(\bigcup_{j\ge0}f^j(B)\right)\supset \interior(f^m(V))\ne\emptyset,$$
proving that $f$ is asymptotically transitive.

Now, assume that $f$ is asymptotically transitive.
If $f$ is not Baire ergodic, there is a fat invariant Borel set $V$ such $X\setminus V$ is also fat.
Let $A,B\subset\XX$ be open sets such that $V\sim$ is residual in $A$ and $\XX\setminus V$ is residual in $B$.

It follows from Corollary~\ref{CorLemResnaoSingsing} in Appendix (and the invariance of $V$ and $\XX\setminus V$) that $V$ is residual in $f^{j}(A)$  and  $\XX\setminus V$ is residual in $f^j(B)$ for every $j\ge0$.
Since $f$ is asymptotically transitive, $W=\big(\bigcup_{j\ge0}f^j(A)\big)\cap\big(\bigcup_{j\ge0}f^j(B)\big)$ is a fat Borel set and both $V$ and $\XX\setminus V$ are residual in $W$.
This is a contradiction, as this would imply that  $V\cap(\XX\setminus V)\ne\emptyset$. 
\end{proof}

\subsection{$u$-Baire ergodicity}
As in Section~\ref{SecStatOfMainsR}, in a metric space $(\XX,d)$, one can define the {\bf\em stable set of a point} $x\in\XX$ with respect to a map $f:\XX\circlearrowleft$ as $$W_f^s(x)=\left\{y\in\XX\,;\,\lim_{n\to+\infty}d(f^n(x),f^n(y))=0\right\}$$
and the {\bf\em stable set of a set} $U\subset\XX$ as $W_f^s(U)=\bigcup_{x\in U}W_f^s(x)$.

From the classical theory of Uniformly Hyperbolic Dynamical Systems, given a $C^1$ Anosov diffeomorphism $f:M\circlearrowleft$ defined on a compact manifold, the tangent space at each $x\in M$ splits into two complementary directions $T_xM=\EE^s\oplus\EE^u$ such that the derivative contracts on  the ``stable'' direction $\EE^s$ and expands on the ``unstable'' direction $\EE^u$ , at uniform rates. 
Moreover, for every $x\in M$, $W_f^s(x)$ is, locally (\footnote{ For every $p\in W_f^s(x)$ and a small enough $\varepsilon>0$, the connected component $N$ of $W_f^s(x)\cap B_{\varepsilon}(p)$ containing $p$ is a submanifold with $T_pN=\EE^s(p)$.}), a sub-manifold of $M$ tangent to $\EE^s$. 
As the asymptotical behavior of the points in $W_f^s(x)$ are the same, we may restrict, in the definition of ergodicity, to invariant set that are equal to its stable set.
This reduces the collection of allowed invariant sets producing a weaker definition of ergodicity called $u$-ergodicity (\footnote{ This name comes from the fact that belonging to a stable set is an equivalence relation ($x\sim y$ if  $x\in W_f^s(y)$), and from the idea that by grouping the points on stable manifolds, we are making a kind of quotient by $\sim$ and so (in a hyperbolic context) seeing only the unstable behavior of the dynamics.}).
The concept of $u$-ergodicity was introduced by Alves, Dias, Pinheiro and Luzzatto \cite{ADLP} for non (necessarily) invariant measures, in this section we adapted it to Baire ergodicity.

In the remaining of this section, $\XX$ is a Baire metric space and $f:\XX\circlearrowleft$ is a measurable map with respect to a $\sigma$-algebra $\mathfrak{A}$, where $\mathfrak{A}$ has the Baire property.

\begin{Definition}[$u$-Baire ergodicity]
We say that $f$ is {\bf\em $u$-Baire ergodic} if every set $U\in\mathfrak{A}$ satisfying $f^{-1}(U)= U= W_f^s(U)$ is meager or residual. \end{Definition}

Of course, every Baire ergodic map is $u$-Baire ergodic.
A simple example of a $u$-Baire but not Baire ergodic map is the contraction $f:\RR\circlearrowleft$ given by $f(x)=x/2$.
In this case $W_f^s(x)=\RR$ for every $x$, proving that $f$ is $u$-Baire ergodic.
Nevertheless,  $A=\bigcup_{n\in\ZZ}f^n((1/2,2/3))$ and $B=\bigcup_{n\in\ZZ}f^n((2/3,1))$ are two $f$ invariant nonempty open sets such that $A\cap B=\emptyset$, proving that $f$ is not Baire ergodic.

A {\bf\em $u$-Baire potential} $\varphi$ for $f:\XX\circlearrowleft$ is a  measurable map $\varphi$ of $(\XX,\mathfrak{A})$ to a measurable space $(\YY,\mathfrak{B})$, where $\YY$ is a  complete separable metric space $\YY$ and $\mathfrak{B}$ is the Borel $\sigma$-algebra on $\YY$, and such that
\begin{equation}\label{Equagftuft4}
  \varphi(x)=\varphi(y)\;\;\forall x\in\XX\text{ and }y\in W_f^s(x).
\end{equation}
That is, $\varphi$ is a $u$-Baire potential if $\varphi$ is a Baire potential satisfying \eqref{Equagftuft4}.

\begin{Proposition}\label{Propositionkjuiiiifg7655}
If $f$ is $u$-Baire ergodic then the following statements are true.
\begin{enumerate}
\item Every almost invariant  $u$-Baire potential on $\XX$ is almost constant.
\item Every continuous and almost  invariant function $\psi:\XX\to\RR$ is constant.
\item $\XX\ni x\mapsto\limsup_{n}\frac{1}{n}\sum_{j=0}^{n-1}\varphi\circ f^{j}(x)$  is almost constant for every  continuous function $\varphi:\XX\to\RR$ such that $\limsup_{n}\frac{1}{n}\sum_{j=0}^{n-1}\varphi\circ f^{j}(x)\in\RR$ for every $x\in\XX$.

\end{enumerate}
\end{Proposition}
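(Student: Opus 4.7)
The plan is to adapt the proof of Proposition~\ref{Propositionkjuytf9tfg76}, replacing ``invariant Baire set'' by ``invariant Baire set that is closed under $W_f^s$'', so that the $u$-Baire ergodicity hypothesis can be invoked in place of ordinary Baire ergodicity.

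For item (1), take an almost invariant $u$-potential $\varphi:\XX\to\YY$, let $U$ be a residual $f$-invariant set on which $\varphi\circ f=\varphi$, and fix $p\in\im\supp\varphi$ (which exists by Lemma~\ref{Lemmakytf7634erbv}). For $n\in\NN$ set $B_n:=\varphi^{-1}(B_{1/n}(p))$; this is a Baire set, and since $\varphi$ is a $u$-potential one has $W_f^s(B_n)=B_n$. The $B_n$ are only almost $f$-invariant, so they cannot be fed directly into $u$-Baire ergodicity. The main technical step is to pass to $B_n^{*}:=\bigcap_{k\ge 0}f^{-k}(B_n)$, which is a Baire set (countable intersection of preimages under the Baire measurable $f$ of a Baire set). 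The $f$-equivariance of stable sets ($y\in W_f^s(x)\Rightarrow f(y)\in W_f^s(f(x))$) together with $W_f^s(B_n)=B_n$ yields $W_f^s(B_n^{*})=B_n^{*}$; the inclusion $f(B_n^{*})\subset B_n^{*}$ is immediate from the definition; and Lemma~\ref{LemmaDusasvo} upgrades forward invariance to $f^{-1}(B_n^{*})=B_n^{*}$. Iterating $\varphi\circ f=\varphi$ along orbits starting in $U$ gives $B_n\cap U\subset B_n^{*}$, and $B_n\cap U$ is fat because $B_n$ is non-meager (by the choice of $p\in\im\supp\varphi$) while $\XX\setminus U$ is meager. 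Thus $u$-Baire ergodicity forces each $B_n^{*}$ to be residual, so $\bigcap_n B_n^{*}\subset\varphi^{-1}(\{p\})$ is residual, proving that $\varphi$ is almost constantly equal to $p$.

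Items (2) and (3) will then be reduced to (1) by checking that the relevant function is a $u$-potential. For (2), given a continuous almost invariant $\psi:\XX\to\RR$, the plan is to show $\psi$ is a $u$-potential: for $y\in W_f^s(x)$, uniform continuity of $\psi$ on a compact orbit closure gives $|\psi(f^n(x))-\psi(f^n(y))|\to 0$, and combined with the identity $\psi=\psi\circ f^n$ (valid on the residual invariant $U$, and extended everywhere via continuity of $f$ and density of $U$), this forces $\psi(x)=\psi(y)$. Part (1) then makes $\psi$ almost constantly equal to some $c\in\RR$ on a residual, hence dense, set, and continuity extends $\psi\equiv c$ to all of $\XX$. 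For (3), set $\psi(x):=\limsup_n\tfrac{1}{n}\sum_{j=0}^{n-1}\varphi\circ f^j(x)$; the telescoping estimate $\psi\circ f(x)-\psi(x)=\limsup_n\tfrac{1}{n}(\varphi(f^n(x))-\varphi(x))=0$ (using boundedness of $\varphi$) yields $\psi\circ f=\psi$ pointwise, while uniform continuity of $\varphi$ combined with Ces\`aro averaging gives $\psi(y)=\psi(x)$ for $y\in W_f^s(x)$, so $\psi$ is a $u$-potential; applying (1) finishes the argument.

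The principal obstacle is verifying the two closure conditions required by $u$-Baire ergodicity for $B_n^{*}$: the $W_f^s$-closure propagates the $u$-potential property of $\varphi$ through the $f$-equivariance of stable sets, while the $f^{-1}$-closure crucially depends on Lemma~\ref{LemmaDusasvo} to convert the easy forward invariance of $B_n^{*}$ into full $f$-invariance. A secondary subtlety is the uniform-continuity step in (2)--(3), which is painless on compact ambient spaces (the setting of the applications) but deserves a word of care in general Baire metric spaces.
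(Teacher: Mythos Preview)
Items (1) and (3) are handled correctly. In fact your treatment of (1) via $B_n^{*}=\bigcap_{k\ge0}f^{-k}(B_n)$ is cleaner than the paper's: the paper instead works with the stable saturations $W_f^s(B_n\cap U)$ and $W_f^s(\triangle_n\cap U)$ and feeds those into $u$-Baire ergodicity without pausing to check that stable saturations of Baire sets are themselves Baire, whereas your $B_n^{*}$ is manifestly Baire and you verify $f^{-1}(B_n^{*})=B_n^{*}=W_f^s(B_n^{*})$ explicitly (the use of Lemma~\ref{LemmaDusasvo} here is a nice touch). For (3) your argument coincides with the paper's.

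The problem is item (2). Your reduction to (1) hinges on showing that a continuous almost-invariant $\psi$ is a genuine $u$-potential, and for this you extend the identity $\psi=\psi\circ f$ from the residual set $U$ to all of $\XX$ ``via continuity of $f$ and density of $U$''. But in the standing hypotheses of this section $f$ is only Baire measurable, not continuous, so that extension step is unavailable; without it you obtain $\psi(x)=\psi(y)$ only for $x,y\in U$ with $y\in W_f^s(x)$, which is not enough to run your proof of (1) (the verification $W_f^s(B_n^{*})=B_n^{*}$ there uses the $u$-potential identity at \emph{every} iterate $f^k(x)$, not just at points of $U$). The paper avoids this by \emph{not} reducing (2) to (1): it reruns the ball-versus-complement argument directly but with a built-in gap, taking $B_\varepsilon=\psi|_V^{-1}((p-\varepsilon/4,p+\varepsilon/4))$ against $\triangle_\varepsilon=\psi|_V^{-1}(\RR\setminus(p-\varepsilon,p+\varepsilon))$, so that uniform continuity of $\psi$ alone (applied along converging stable pairs, using $\psi\circ f=\psi$ only on $V$) forces $W_f^s(B_\varepsilon)\cap W_f^s(\triangle_\varepsilon)$ to be meager --- no continuity of $f$ is invoked. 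Your closing caveat about uniform continuity needing compactness applies equally to the paper's argument, which also tacitly assumes it.
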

\begin{proof}
Proof of item {\em (1)}.
Let $\YY$ be a complete separable metric space and $\varphi:\XX\to\YY$ an almost invariant $u$-Baire potential.
As $\varphi$ is an almost invariant, there exists a $f$ invariant residual set $U$ such that $\varphi\circ f(x)=\varphi(x)$ for every $x\in U$.
Choose any $p\in\im\supp\varphi$ (see Lemma~\ref{Lemmakytf7634erbv}).
Set $B_n:=\varphi^{-1}(B_{1/n}(p))$, $\triangle_n=\varphi^{-1}(\YY\setminus B_{1/n}(p))$, $B_n'=B_n\cap U$ and $\triangle_n'=\triangle_n\cap U$, where $n\in\NN$.
As $\varphi$ is a $u$-Baire potential, we get that $$B_n'\subset W_f^s(B_n')=W_f^s(\varphi^{-1}(B_{1/n}(p))\cap U)\subset W_f^s(\varphi^{-1}(B_{1/n}(p)))=\varphi^{-1}(B_{1/n}(p))=B_n$$ as well as 
$$\triangle_n'\subset W_f^s(\triangle_n')=W_f^s(\varphi^{-1}(\YY\setminus B_{1/n}(p))\cap U)\subset W_f^s(\varphi^{-1}(\YY\setminus B_{1/n}(p)))=\varphi^{-1}(\YY\setminus B_{1/n}(p))=\triangle_n.$$ 

Note that $B_n'$ and $\triangle_n'$ are $f$-invariant sets and $W_f^s(B_n')\cap W_f^s(\triangle_n')\subset B_n\cap\triangle_n=\emptyset$ for every $n\in\NN$.
As $p\in\im\supp\varphi$, $B_n'$ and also $W_f^s(B_n')$ are fat sets and so, it follows from the $u$-ergodicity of $f$ that $W_f^s(\triangle_n')$ is meager. This implies that $\triangle_n'$ is also meager for every $n\in\NN$.
Hence, $U\setminus\varphi^{-1}(p)=U\cap\varphi^{-1}(\YY\setminus\{p\})=\bigcup_{n\in\NN}\triangle_n'$ is a meager set, proving that $U\cap\varphi^{-1}(p)$ is a residual in $\XX$.
That is, $\varphi(x)=p$ for a residual set of points $x\in\XX$.

Proof of item {\em (2)}. Let $\psi:\XX\to\RR$  be a continuous and invariant function.
Let $V$ be a $f$-invariant residual set such that $\psi\circ f(x)=\psi(x)$ for every $x\in V$.

By Lemma~\ref{Lemmakytf7634erbv}, $\im\supp\psi|_V\ne\emptyset$ and so,  choose a point $p\in\im\supp\psi|_V\subset\RR$.
Given $\varepsilon>0$ let  $B_{\varepsilon}:=\psi|_V^{-1}((p-\varepsilon/4,p+\varepsilon/4))$ and $\triangle_n=\psi|_V^{-1}(\YY\setminus (p-\varepsilon,p+\varepsilon))$.
As $B_\varepsilon$ and $\triangle_\varepsilon$ are $f$ invariant sets, we get that $W_f^s(B_\varepsilon)$ and $W_f^s(\triangle_\varepsilon)$ are also $f$-invariant.

We claim that $W_f^s(B_\varepsilon)\cap W_f^s(\triangle_\varepsilon)\sim\emptyset$.
 Indeed, if $x\in V\cap W_f^s(B_\varepsilon)\cap W_f^s(\triangle_\varepsilon)$ then, let $\delta>0$ be such that $|\psi(a)-\psi(b)|<\varepsilon/4$ for every $a,b\in\XX$ with $d(a,b)<\delta$. 
As $d(f^j(x),f^j(B_{\varepsilon}))$ and $d(f^j(x),f^j(\triangle_{\varepsilon}))\to0$ there exists $\ell\ge0$ such that $d(f^j(x),f^j(B_{\varepsilon}))$ and $d(f^j(x),f^j(\triangle_{\varepsilon}))<\delta$ for every $j\ge\ell$.
 From $d(f^j(x),f^j(B_{\varepsilon}))<\delta$, for $j\ge\ell$, we get that $\psi(f^j(x))\in (p-\varepsilon/2,p+\varepsilon/2)$ $\forall\,j\ge\ell$.
On the other hand, from $d(f^j(x),f^j(\triangle_{\varepsilon}))<\delta$ for every $j\ge\ell$, we get that $\psi(f^j(x))\notin(p-\varepsilon,p+\varepsilon)$, a contradiction.

As $B_{\varepsilon}\subset W_f^s(B_{\varepsilon})$ and $B_{\varepsilon}$ is a fat set, we get from the $u$-ergodicity of $f$ that $W_f^s(B_{\varepsilon})$ is a residual set and, as $W_f^s(B_\varepsilon)\cap W_f^s(\triangle_\varepsilon)\sim\emptyset$, we get that $W_f^s(\triangle_\varepsilon)$ is a meager set for every $\varepsilon>0$. 
Hence, $V\setminus W_f^s(\psi^{-1}(p))\subset \bigcup_{n\ge1}W_f^s(\triangle_{1/n})$ is a meager set, proving that $V\cap W_f^s(\psi^{-1}(p))$ is a residual in $\XX$.
This implies that $d(f^j(x),f^j(\psi^{-1}(p)))\to0$ for a residual set of points $x\in\XX$.
That is, $\psi(x)=p$ for a residual set of points $x\in\XX$.
As $f|_V(\psi^{-1}(p))=\psi^{-1}(p)$, we have that $d(f^j(x),\psi^{-1}(p))\to0$ for a residual sets of points $x\in V$ (and so, for a residual sets of points $x\in\XX$).
As a consequence of the continuity of $\psi$, $|\psi(f^j(x))-p|\to0$ for a residual set of points $x$.
But, as $\psi$ is almost invariant, we conclude that $\psi(x)=p$ residually in $\XX$ and so, by continuity, $\psi(x)=p$ for every $x\in\XX$.

Proof of item {\em (3)}. Letting $\psi(x)=\limsup_{n\to+\infty}\frac{1}{n}\sum_{j=0}^{n-1}\varphi\circ f^{j}(x)$, we get that $\psi$ is measurable and $\psi(f(x))=\psi(x)$ for every $x\in\XX$.
As $\varphi$ is equicontinuous,  $\lim_{n\to+\infty}|\varphi\circ f^n(x)-\varphi\circ f^n(y)|\to0$ for every $y\in W_f^s(x)$ and so, $\psi(x)=\psi(W_f^s(x))$ for every $x\in\XX$, proving that $\psi$ is a almost invariant $u$-Baire potential.
Thus,  item {\em (3)} follows from item {\em (1)}.
\end{proof}

\section{Topological and statistical attractors}\label{SectionAttractors}
In many situations (for instance,  expanding/contracting Lorenz maps \cite{Br,GuWi,Me,Ro}), we have a dynamical system generated by a map $f$ that is continuous on the whole space $X$ except in a compact meager set $\cc$ (Figure~\ref{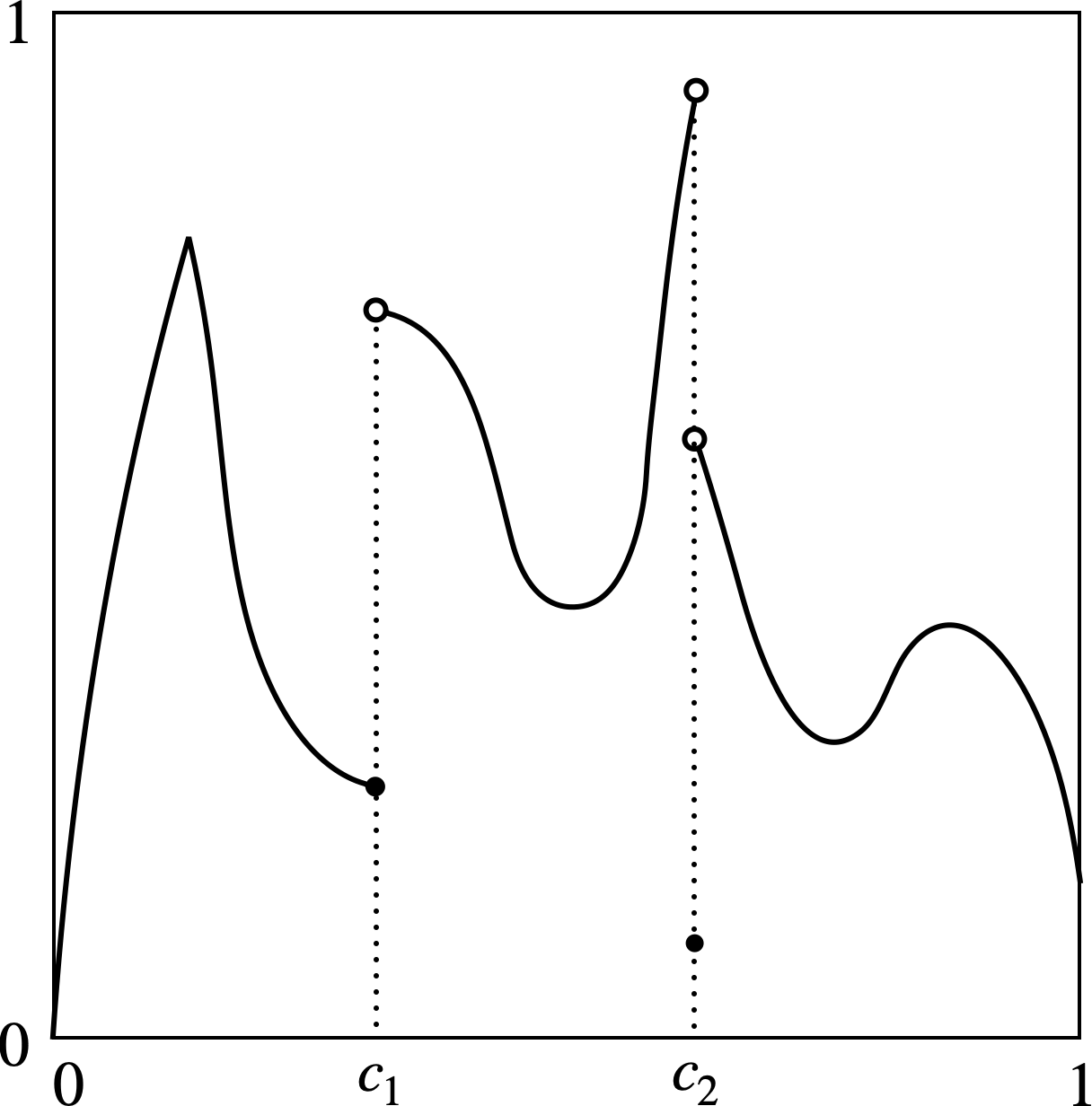}). In this case, we can consider the continuous map $g:X_0\to X$ where $g:=f|_{X_0}$ and $X_0=X\setminus\cc$ is an open and dense subset of $X$.
Thus, due to the applications we want to obtain, we will assume for the  entire  Section~\ref{SectionAttractors} that $\XX$ is a compact metric space, $\mathfrak{A}$ is Borel $\sigma$-algebra of $\XX$ ($\mathfrak{A}$ has the Baire property by Proposition~\ref{PropBorelBaire}), $\XX_0$ is an open and dense subset of $\XX$ and $f:\XX_0\to\XX$ be a non-singular continuous map.
\begin{figure}
\begin{center}\includegraphics[scale=.25]{NonSingCont.png}
\caption{The picture above represents the graph of a map $f$ that is not continuous.
Nevertheless the results of Section~\ref{SectionAttractors} can be applied to $g:[0,1]\setminus\cc\to[0,1]$, where $\cc=\{c_1,c_2\}$ and $g=f\big|_{[0,1]\setminus\cc}$,  since $X_0:=[0,1]\setminus\cc$ is an open and dense subset of $[0,1]$ and $g$ is a non-singular continuous map.
As $\widetilde{X}:=\bigcap_{n\ge0}g^{-n}(X_0)$ is a residual set of $[0,1]$ and $f|_{\widetilde{X}}=g|_{\widetilde{X}}$, the generic behavior of a point $x\in[0,1]$ by $f$ can be analyzed by $g$. 
}\label{NonSingCont.png}
\end{center}
\end{figure}

Let $2^{\XX}$ be the power set of $\XX$, that is, the set for all subsets of $\XX$, including the empty set.
Define ${f^*}:2^{\XX}\circlearrowleft$ given by
$${f^*}(U)=\begin{cases}
	\emptyset & \text{ if }U\cap\XX_0=\emptyset\\
	f(U\cap\XX_0) & \text{ if }U\cap\XX_0\ne\emptyset
\end{cases}$$
We say that $U\subset\XX$ is {\bf\em forward invariant} if ${f^*}(U)\subset U$ and, as before, $U$ is called {\bf\em invariant} if $f^{-1}(U)=U$ and {\bf\em almost invariant} when $f^{-1}(U)\sim U$.
Let $\co_f^+(U)=\bigcup_{n\ge0}{f^*}^n(U)$ be the forward orbit of $U\subset\XX$ and $\co_f^-(U)=\bigcup_{n\ge0} f^{-n}(U)$ the backward orbit of $U$.
For short, write ${f^*}^n(x)$, $f^{-n}(x)$, $\co_f^+(x)$ and $\co_f^-(x)$ instead of ${f^*}^n(\{x\})$, $f^{-n}(\{x\})$, $\co_f^+(\{x\})$ and $\co_f^-(\{x\})$ respectively.
The omega-limit of a point $x$, $\omega_f(x)$, is the set of accumulating points of the forward orbit of $x\in\XX$.
Precisely,
\begin{equation}\label{Equationovifcf}
\omega_f(x)=\bigcap_{n\ge0}\overline{\co_f^+({f^*}^n(x))}
\end{equation}
and the {\bf\em alpha limit set} of $x$, the of all accumulation points of the pre-orbit of $x$, is
$$\alpha_f(x)=\bigcap_{n\ge0}\overline{\co_f^-(f^{-n}(x))}.$$

Adapting the definitions given in Section~\ref{SecStatOfMainsR}, 
the {\bf\em basin of attraction} of a compact set $A$ is $$\beta_f(A)=\{x\in\XX\,;\,\emptyset\ne\omega_f(x)\subset A\}.$$
 Thus, as in Section~\ref{SecStatOfMainsR}, a compact set $A$
is called  a {\bf\em topological attractor} if
$\beta_f({A})$ and $\beta_f(A)\setminus\beta_f(A')$ are fat sets for every nonempty compact set $A'\subsetneqq A$.

\subsection{Baire ergodic components}

In many situations, $f$ may not be Baire Ergodic, but  the space can be decompose into subsets in which the restriction of $f$ to each of these subsets is Baire ergodic.
These subsets are the Baire ergodic components of $f$.

\begin{Definition}[Baire ergodic components]\label{DefiBaireErComp}
A measurable almost invariant fat set $U\subset\XX$ is called a {\bf \em Baire ergodic component of $f$} if $V\sim U$ or $V\sim\emptyset$ for every almost invariant measurable set $V\subset U$.
\end{Definition}

In Section~\ref{SectionBaireErgodicity} Baire ergodicity was defined using invariant sets. The connection between the Baire ergodicity and the  almost invariant sets was established there by Corollay~\ref{Corollaryiouregh}.
Here, since the definition above of Baire ergodic components use almost invariant sets, Lemma~\ref{Lemmakuytrdjku4} below connects Baire ergodic components with the invariant sets.

\begin{Lemma}\label{Lemmakuytrdjku4}
If $U$ is a measurable almost invariant fat set then 
$U$ is an Baire ergodic component of $f$ if and only if $V\sim U$ or $V\sim\emptyset$ for every  invariant measurable set $V\subset U$.
\end{Lemma}
\begin{proof}Suppose that $V\sim U$ or $V\sim\emptyset$ for every  invariant measurable set $V\subset U$.
Let $V$ be an almost invariant measurable set.
We may assume that $V$ is a fat set, otherwise there are nothing to prove.
Thus, it follows form Lemma~\ref{Lemmagdigi00fi75r8} that $V':=\bigcup_{n\ge0}f^{-n}\left(\bigcap_{j\ge0}f^{-j}(V)\right)$ is a measurable invariant fat set and $V'\sim V$.
Since $V'$ is invariant and fat, by assumption, $V'\sim U$ and so, $V\sim U$, proving that $U$ is a Baire ergodic component.

On the other hand, if we assume that $U$ is a Baire ergodic component, since every invariant set is almost invariant, we get that $V\sim U$ or $V\sim\emptyset$ for every measurable invariant set $V\subset U$.
\end{proof}

By Definition~\ref{DefiBaireErComp} above, if $U$ and $V$ are Baire ergodic components then either $U\sim V$ or $U\cap V\sim\emptyset$.
Thus, since we are assuming that $\XX$ is compact, $\XX$ has at most a countable number of non (meager) equivalent Baire ergodic components.
We say that $\XX$ can be decomposed into Baire ergodic components when there exists a countable collection $\{U_n\,;\,n\in L\}$, $L\subset\NN$, of Baire ergodic components such that $\XX\sim\bigcup_{n\in L}U_n$. Proposition~\ref{PropositionBaireProjectionCriterium} below gives a criterion for a finite Baire ergodic decomposition of $\XX$.

Let $\mathfrak{I}(f)\subset\mathfrak{A}$ be the sub $\sigma$-algebra of all $f$ invariant measurable sets.
A {\bf\em Baire $f$-function} is a map $\mathfrak{m}:\mathfrak{I}(f)\to[0,+\infty)$ such that $\mathfrak{m}(\XX)>0$ and 
$$A\cap B\sim\emptyset\implies \mathfrak{m}(A)+\mathfrak{m}(B)\le \mathfrak{m}(A\cup B).$$

\begin{Proposition}[Criterium for a finite Baire ergodic decomposition]\label{PropositionBaireProjectionCriterium}
If there exist a Baire $f$-function  $\mathfrak{m}$ and $\ell\in\NN$ such that either $\mathfrak{m}(U)=0$ or $\mathfrak{m}(U)\ge \mathfrak{m}({\XX})/\ell$ $\forall U\in\mathfrak{I}(f)$, then ${\XX}$ can be decomposed (up to a meager set) into at most $\ell$ Baire ergodic components.
\end{Proposition}
\begin{proof}
Let $M\subset {\XX}$ be any fat invariant measurable set (for example,
$M={\XX}$) and let $\mathcal{F}(M)$ be the collection of all fat invariant measurable sets contained in $M$.
Note that  $\mathcal{F}(M)$ is nonempty, because $M\in\mathcal{F}(M)$. Let us consider the inclusion (up to a meager subset) as a partial order on $\mathcal{F}(M)$, i.e., $A\le A'$ if $A'\setminus A$ is meager.

\begin{Claim}\label{Claimkhgivi}
Every  totally ordered subset $\Gamma\subset\mathcal{F}(M)$ is finite.
In particular, it has an upper bound.
\end{Claim}
\begin{proof}
Otherwise there is an infinite sequence
$\gamma_0\supset \gamma_1\supset\gamma_3\supset\cdots$ with $\gamma
_k\in\mathcal{F}(M)$ and
$\Delta_k:=\gamma_{k}\setminus\gamma_{k+1}$ being a fat $\forall k$.
As $\gamma_j$ is invariant $\forall\,j$, $\Delta_k$ is also a fat invariant set, that is, $\Delta_k\in\mathcal{F}(M)$.
Thus, by hypothesis, $\mathfrak{m}(\Delta_k)\ge \mathfrak{m}({\XX})/\ell$. 
As $\Delta_i\cap\Delta_j=\emptyset$ whenever $i\ne j$, we get $\frac{k}{\ell}\mathfrak{m}({\XX})\le \mathfrak{m}(\Delta_1)+\cdots+\mathfrak{m}(\Delta_k)\le \mathfrak{m}({\XX})$ $\forall\,k\in\NN$, which is a contradiction.
\end{proof}

From Zorn's Lemma, there exists a maximal
element $U\in\mathcal{F}(M)$ and, by Lemma~\ref{Lemmakuytrdjku4}, this is necessarily a Baire ergodic component.
Thus, take $M_1={\XX}$ and let $U_1$ be a maximal element of $\mathcal{F}(M_1)$ given by Zorn's Lemma.
As $M_2:={\XX}\setminus U_1$ is an invariant set, either it is meager or we can apply the argument above to $M_2$ and obtain a new Baire ergodic component $U_2$ inside ${\XX}\setminus U_1$.
Inductively, we can construct a collection of Baire ergodic components $U_1,...,U_i$ while ${\XX}\setminus(U_1\cup
...\cup U_i)$ is fat.
Nevertheless, as $\mathfrak{m}(U_j)\ge \mathfrak{m}({\XX})/\ell$ $\forall j$ and $U_j\cap U_k=\emptyset$ whenever $j\ne k$, this process have to stop at some $k\le\ell$ and so, ${\XX}\sim U_1\cup\cdots\cup U_k$.
\end{proof}

Let $\mathfrak{O}$ be the set of all open sets of $X$ and consider the following definition. 

\begin{Definition}[Baire projection]\label{DefBaireProj}
The {\bf\em Baire projection} $\pi:\mathfrak{A}\to\mathfrak{O}$  associates a measurable set to the maximal open set meager equivalent to it.
That is,
$$\pi(U)=\bigcup_{\mathfrak{O}\ni L\sim U}L.$$
\end{Definition}
Observes that $\pi(U)=\interior(\overline{V})$, for every open set $V\sim U$, and
$$A\cap B\sim\emptyset\iff\pi(A)\cap\pi(B)=\emptyset.$$

Lemma~\ref{LemmAAAAfi75r8} below is an improvement of Lemma~\ref{Lemmagdigi00fi75r8} as the $f$ invariant set obtained is a Baire subspace. 

\begin{Lemma}\label{LemmAAAAfi75r8}
If $U\subset\XX_0$ is a fat almost invariant measurable set then $$\widetilde{U}:=\bigcup_{n\ge0}f^{-n}\left(\bigcap_{j\ge0}f^{-j}(\pi(U))\right)\subset\XX_0$$ is a fat  invariant measurable set, $\widetilde{U}\sim U$ and $\widetilde{U}$ is a Baire subspace of $\XX$.
\end{Lemma}
\begin{proof}
As $f$ is non-singular, and $\pi(U)\sim U$, we get that $f^{-1}(\pi(U))\sim f^{-1}(U)$ and so, $\pi(U)\sim U\sim f^{-1}(U)\sim f^{-1}(\pi(U))$, proving that $\pi(U)$ is an almost invariant nonempty open set.
Hence, it follows from Lemma~\ref{Lemmagdigi00fi75r8} that $\widetilde{U}$ is a measurable invariant set and $\widetilde{U}\sim\pi(U)\sim U$. 
Therefore, to conclude the proof, we need to show that $\widetilde{U}$ is a Baire subspace of $\XX$.

\begin{Claim}\label{Claimhviyfyi}
$f^{-j}(\pi(U))$ is an open and dense subset of $\pi(U)$.
\end{Claim}
\begin{proof}[Proof of the claim]
As $f$ is continuous, $f^{-j}(\pi(U))$ is an open set $\forall j\ge0$.
It follows from $f^{-j}(\pi(U))\sim\pi(U)$ that $\pi(U)\cup f^{-j}(\pi(U))\sim\pi(U)\sim U$.
That is, $\pi(U)\cup f^{-j}(\pi(U))$ is an open set meager equivalent to $U$ and so, by the definition of $\pi(U)$,  $\pi(U)\cup f^{-j}(\pi(U))=\pi(U)$, proving that $f^{-j}(\pi(U))\subset\pi(U)$. Since, $f^{-j}(\pi(U))\sim\pi(U)$, we have also that $f^{-j}(\pi(U))$ is dense in $\pi(U)$.
\end{proof}
As $\pi(U)$ is a Baire subspace of $\XX$ and every countable intersection of open and dense subset of a Baire space is a Baire subspace, it follows form Claim~\ref{Claimhviyfyi} that $A:=\bigcap_{j\ge0}f^{-j}(\pi(U))$ is a Baire subspace of $\XX$.

\begin{Claim}\label{Calimufdfghj}
$f^{-j}(A)$ is a Baire subspace of $\XX$ for every $j\ge0$
\end{Claim}
\begin{proof}[Proof of the claim]
Suppose that $V_1,V_2,V_3,\cdots$ is a countable collection of open and dense subsets (in the induced topology) of $f^{-j}(A)$.
Thus, $V_k=\XX_k\cap f^{-j}(A)$, for some open set $\XX_k\subset\XX$, $k\in\NN$.
Writing $\cx:=\bigcup_k\XX_k$, we have that $\bigcap_kV_k=\bigcup_k(\XX_k\cap f^{-j}(A))=\cx\cap f^{-j}(A)$.
Using that $f$ is non-singular, we get that $f^{-j}(A)\subset f^{-j}(\pi(U))\sim f^{-j}(A)$ and so, as $\cx$ is residual in the open set $f^{-j}(\pi(U))$, we can conclude that $\cx\cap f^{-j}(A)$ is dense in $f^{-j}(A)$.
Indeed, writing $f^{-j}(\pi(U))=f^{-j}(A)\cup H$, where $H$ is a meager set,  and since $H\cap\cx$ is a meager set, we have that $f^{-j}(A)\cap\cx$ is a residual set of the open set $f^{-j}(\pi(U))$.
In particular, $f^{-j}(A)\cap\cx$ is dense in $f^{-j}(\pi(U))$ and so,  $f^{-j}(A)\cap\cx$ is dense in $f^{-j}(A)$.
Since $\cap_kV_k$ being dense in $f^{-j}(A)$ implies that $f^{-j}(A)$ is a Baire subspace of $\XX$, we finished the proof the claim.
\end{proof}

As an arbitrary union of Baire subspaces is a Baire subspace $($\footnote{ Let $A=\bigcup_{\ell}A_{\ell}$, where $L$ is a set of indices and, for each $\ell\in L$, $A_{\ell}$ is a Baire subspace of $\XX$.
Let $V_1, V_2, V_3,\cdots$ be a countable collection of open and dense (in the induced topology) of $A$. We can write $V_k=A\cap \XX_k$ for some open and dense set $\XX_k\subset\XX$.
Thus, $\bigcap_{k}V_k=\cx\cap A$, where $\cx=\bigcap_k\XX_k$ is a residual subset of $\XX$.
As a consequence, $\bigcap_kV_k=\cx\cap\bigcup_{\ell}A_{\ell}=\bigcup_{\ell}\cx\cap A_{\ell}$.
Since $A_{\ell}$ is a Baire subspace of $\XX$, $\cx$ is residual (in particular, dense) in $A_{\ell}$ and so, $\bigcap_kV_k$ is dense in $\bigcup_{\ell}A_{\ell}$, proving that $\bigcup_{\ell}A_{\ell}$ is a Baire subspace of $\XX$.}$)$, it follows from Claim~\ref{Calimufdfghj} above that $\widetilde{U}$ is a Baire subspace of $\XX$.
\end{proof}

Corollary~\ref{Corollaryjuytf9tfg76} below is the version for Baire ergodic components of Proposition~\ref{Propositionkjuytf9tfg76}. 

\begin{Corollary}
\label{Corollaryjuytf9tfg76}
If $U$ is a measurable almost invariant fat set the the following statements are equivalent.
\begin{enumerate}
\item $U$ is Baire ergodic component for $f$.
\item Given an almost invariant Baire potential $\varphi:\XX\to\YY$, there exists $y\in\YY$ such that $\varphi(x)=y$ for a residual set of points $x\in U$.
\item Given a measurable  function $\varphi:X\to\RR$   with $\limsup_{n}\frac{1}{n}\sum_{j=0}^{n-1}\varphi\circ f^{j}(x)\in\RR$ for every $x\in X$, there exists 
$r\in\RR$ such that $\limsup_{n}\frac{1}{n}\sum_{j=0}^{n-1}\varphi\circ f^{j}(x)=r$ for a residual set of points $x\in U$.
\item Given $A\in\mathfrak{A}$, there exists  $ \theta\in[0,1]$ such that $\tau_x(A)=\theta$ for a residual set of points $x\in U$.
\end{enumerate}
\end{Corollary}
\begin{proof}
By Lemma~\ref{LemmAAAAfi75r8}, $\widetilde{U}=\bigcup_{n\ge0}f^{-n}\left(\bigcap_{j\ge0}f^{-j}(\pi(U))\right)$ is an invariant fat measurable set and also a Baire subspace of $\XX$.
Therefore, we can apply Proposition~\ref{Propositionkjuytf9tfg76} to $f|_{\widetilde{U}}$ and the proof follows from the fact that $\widetilde{U}\sim U$.
\end{proof}

\subsection{Topological attractors for Baire ergodic components}
Since, in our context, the omega-limit sets are compact sets, a natural tool to analyze their behavior is the Hausdorff distance.
  
The {\bf\em distance of $x\in\XX$ and $\emptyset\ne U\in2^{\XX}$} is given by $d(x,U)=\inf\{d(x,y)\,;\,y\in U\}$.
Defining the {\bf\em open ball of radius $r>0$ and center on $\emptyset\ne U\in2^{\XX}$} as $$B_{r}(U)=\bigcup_{x\in U}B_r(x)=\{x\in\XX\,;\,d(x,U)<r\},$$ the {\bf\em Hausdorff distance} of two nonempty sets  $U$ and $V\subset\XX$ is given by
$$d_H(U,V)=\inf\{r>0\,;\,B_r(U)\supset V\text{ and }B_r(V)\supset U\}.$$

Let $\KK(\XX)$ be the set of all nonempty compact subsets of $\XX$.
Since $\XX$ is a compact metric space, it is well known that $(\KK(\XX),d_H)$ is also a compact metric space. 

\begin{Lemma}\label{LemmalOMEGAmeasu}
The map $\psi:\widetilde{\XX}\to\KK(\XX)$ given by $\psi(x)=\omega_f(x)$ is a measurable map, where $\widetilde{\XX}=\bigcap_{n\ge0}f^{-n}(\XX)$.
Moreover, $\psi$ is invariant Baire potential on the Baire space $\widetilde{\XX}$.
\end{Lemma}
\begin{proof} As $f$ is continuous, $f^{-j}(\XX)$ is an open and dense set, we get that $\XX$ is a Baire subspace of $\XX$ $($\footnote{ One can also use Lemma~\ref{LemmAAAAfi75r8}, since $\bigcap_{n\ge0}f^{-n}(\XX)=\bigcup_{n\ge0}(\bigcap_{j\ge0}f^{-j}(\XX))$.}$)$.
Moreover, by the definition of $\omega_f(x)$ (see \eqref{Equationovifcf}), we have that $\omega_f(x)=\omega_f(f(x))$ for every $x\in\XX$.
In particular, $\psi(x)=\psi(f(x))\in\KK(\XX)$ for every $x\in\widetilde{\XX}$.
Thus, since $\KK(\XX)$ is a complete separable metric space, we have that $\psi$ is an invariant Baire potential on $\widetilde{\XX}$.
As a consequence, we need only to show that $\psi$ is measurable.

Let $\psi_{n,m}:\widetilde{\XX}\to\KK(\XX)$ be given by $\psi_{n,m}(x)=\{f^{n}(x),\cdots,f^{n+m}(x)\}$.
As $f$ is continuous, it is easy to see that $\psi_{n,m}$ is a sequence of continuous maps and 
it is easy to show that $\lim_m\psi_{n,m}(x)=\overline{\co_f^+(f^n(x))}$.
Hence, the map $\psi_n:\widetilde{\XX}\to\KK(\XX)$ given by $\psi_n(x)=\overline{\co_f^+(f^n(x))}$ is a measurable map, since it is the pointwise limit of measurable maps.
As $\omega_f(x)=\bigcap_{n}\overline{\co_f^+(f^n(x))}$, one can see that $\lim_n\psi_n(x)=\omega_f(x)=\psi(x)$ for every $x\in\widetilde{\XX}$, proving that $\psi$ is a measurable map. 
\end{proof}

\begin{Proposition}[The topological attractor of a Baire ergodic component]\label{Propositiontop-ergodicAttractors} 
If $U\subset \XX$ is a Baire ergodic component of $f$, then there exists a unique topological attractor $A\subset\overline{U}$ attracting a residual subset of $U$.
Moreover, $\omega_f(x)=A$ for a residual set of points $x\in U$.
\end{Proposition}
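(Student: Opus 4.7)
The approach is to extract $A$ canonically as the common $\omega$-limit of orbits of generic points of $U$, using a countable base of $\XX$ combined with the Baire-ergodic dichotomy, and then verify the topological-attractor axioms. First, by Lemma~\ref{Lemmagdigi00fi75r8} I replace $U$ by its fat $f$-invariant refinement $U'\sim U$; the restriction $f|_{U'}$ is still Baire ergodic. Since $\XX$ is a compact metric space, fix a countable base $\{V_n\}_{n\in\NN}$ for its topology. For each $n$ the set
\[E_n := \{x \in U' : f^j(x)\in V_n\text{ for infinitely many }j\ge 0\}\]
is Borel and satisfies $f^{-1}(E_n)=E_n$, so Baire ergodicity of $f|_{U'}$ forces each $E_n$ to be either meager or residual in $U'$. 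Setting $\mathcal{J}:=\{n : E_n\text{ is residual in }U'\}$, I define the closed set $A := \XX\setminus\bigcup_{n\notin\mathcal{J}} V_n$.

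Next I show $\omega_f(x)=A$ generically. Let $U_0 := \bigcap_{j\ge 0} f^{-j}(U)\subset U'$, which is fat and $\sim U$ by the proof of Lemma~\ref{Lemmagdigi00fi75r8}. The set
\[R := \bigcap_{n\in\mathcal{J}} E_n \,\cap\, \bigcap_{n\notin\mathcal{J}}(U'\setminus E_n) \,\cap\, U_0 \,\cap\, \bigcap_{j\ge 0}f^{-j}(\XX_0)\]
is residual in $U'$, and hence in $U$. For every $x\in R$ a basis unpacking shows that $y\in\omega_f(x)$ iff every $V_n\ni y$ has $n\in\mathcal{J}$, which is precisely the condition $y\in A$; hence $\omega_f(x)=A$. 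Since the forward orbit of $x$ lies in $U\cap\XX_0$, we get $A=\omega_f(x)\subset\overline{U}$, and $A$ is nonempty, compact, and forward invariant as the $\omega$-limit of an orbit in a compact space.

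Finally I verify the topological-attractor axioms. The basin $\beta_f(A)\supset R$ is fat; for any compact forward invariant $A'\subsetneqq A$, picking $y\in A\setminus A'$ together with an open $V\ni y$ disjoint from $A'$, I see $\omega_f(x)=A$ meets $V$ for every $x\in R$, so $R\subset\beta_f(A)\setminus\beta_f(A')$ is fat and $A$ is a topological attractor. For uniqueness, any other topological attractor $A''\subset\overline{U}$ whose basin meets $U$ residually satisfies $\omega_f(x)=A\subset A''$ on the residual set $R\cap\beta_f(A'')$, so $A\subset A''$; the reverse inclusion comes from the topological-attractor minimality of $A''$ applied to the compact forward invariant subset $A$, together with the observation that $\beta_f(A'')\setminus\beta_f(A)$ is meager in $U'$ by our construction, so any strict containment $A\subsetneqq A''$ would force a fat portion of $\beta_f(A'')\setminus\beta_f(A)$ to sit outside $U'$---which I expect to be ruled out using $A''\subset\overline{U}$ together with the $f$-invariance of basin differences. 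This uniqueness step is the main technical hurdle I foresee; existence and the $\omega_f(x)=A$ clause are clean consequences of the canonical basis-based construction.
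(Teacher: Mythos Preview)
Your argument for the existence of $A$ and the clause $\omega_f(x)=A$ generically on $U$ is correct, but it proceeds along a genuinely different route from the paper. The paper packages the whole thing into a single invariant Baire potential: it regards $x\mapsto\omega_f(x)$ as a map into the hyperspace $\KK(\overline{U})$ with the Hausdorff metric, verifies that this map is Baire measurable (as an iterated pointwise limit of the continuous maps $x\mapsto\{f^n(x),\dots,f^{n+m}(x)\}$), notes that it is $f$-invariant, and then invokes Proposition~\ref{Propositionkjuytf9tfg76} to conclude it is almost constant. Your approach instead works directly with a countable base $\{V_n\}$ of $\XX$, applies the ergodic dichotomy to each invariant Borel set $E_n=\{x:f^j(x)\in V_n\text{ i.o.}\}$, and reconstructs $A$ as the complement of the union of the ``meager'' $V_n$'s. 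Your method is more elementary---it avoids the hyperspace machinery and the measurability lemma---while the paper's method is more conceptual and reusable: the same template (show the potential is measurable and invariant, then apply Proposition~\ref{Propositionkjuytf9tfg76}) is recycled verbatim for $\Omega_f^0$, $\omega_f^*$, $\Agemo_f$, and $\omega_{\delta,f}$ later in the paper.

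Regarding uniqueness: the paper's proof does not spell out this step either; it stops once $\omega_f(x)=A$ residually on $U$ is established and treats the attractor axioms and uniqueness as immediate. Your verification of the Milnor minimality of $A$ is clean. For uniqueness, your instinct is right that the issue is that $\beta_f(A'')\setminus\beta_f(A)$ need not lie in $U$; but note that once you have $A\subset A''$ (from $R\cap\beta_f(A'')$ residual in $U$), the fat set $\beta_f(A'')\setminus\beta_f(A)$ required by minimality of $A''$ must be meager inside $U$ (since $R\subset\beta_f(A)$), and the statement only asserts uniqueness among attractors \emph{attracting a residual subset of $U$}---so the intended reading is that any such $A''$ coincides with $A$, which your containment $A\subset A''$ together with the minimality of $A''$ relative to subsets with residually-equivalent basins already gives. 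There is no deeper obstacle here.
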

\begin{proof} Let $\widetilde{\XX}$ and $\psi$ be as in Lemma~\ref{LemmalOMEGAmeasu}
and set $\widetilde{U}:=\bigcup_{n\ge0}f^{-n}\left(\bigcap_{j\ge0}f^{-j}(\pi(U))\right)$.
By Lemma~\ref{LemmAAAAfi75r8}, $\widetilde{U}$ is an invariant measurable fat set and a Baire subspace of $\XX$.
As $\widetilde{U}\subset\widetilde{\XX}$ and $\psi$ is an invariant Baire potential on $\widetilde{\XX}$, we can apply  Proposition~\ref{Propositionkjuytf9tfg76} to $f|_{\widetilde{U}}$ and $\psi|_{\widetilde{U}}$ and conclude that there exists some $A\in\KK(\XX)$ such that $\omega_f(x)=A$ for a residual set of points $x\in\widetilde{U}$.
Since $U\sim\widetilde{U}$, we conclude the proof.
\end{proof}

Proposition~\ref{Propositiontop-ergodicAttractors} above and Lemma~\ref{Lemma98uyhji} imply, for non-singular maps, the known fact that a {\bf\em transitive continuous map is also transitive for generic points}.
Indeed, suppose that $g:\XX\circlearrowleft$ is a non-singular transitive continuous map. 
Thus, by Lemma~\ref{Lemma98uyhji}, $g$ is Baire-ergodic and so, by Proposition~\ref{Propositiontop-ergodicAttractors}, there exist a compact set $A\subset\XX$ and a residual set $R\subset\XX$ such that $\omega_g(x)=A$ for all $x\in R$.
Let $p\in\XX$ be such that $\omega_g(p)=\XX$.
If $A\ne\XX$, let $\delta>0$ be small enough so that $\overline{B_{\delta}(A)}\ne\XX$.
Let $R_n=\{x\in R\,;\,\co_g^+(g^n(x))\subset B_{\delta}(A)\}$, $n\ge0$.
As $\bigcup_{n\ge0}R_n=R\sim\XX$, $R_{\ell}$ is a fat set for some $\ell\ge0$.
Choose an open set $V\sim R_{\ell}$ and $m\in\NN$ so that $g^m(p)\in V$.
As  $g^j(g^m(p))\in g^j(V)\subset \overline{B_{\delta}(A)}$ for every $j\ge\ell$, we get the $\XX=\omega_g(p)=\omega_g(g^m(p))\subset\overline{B_{\delta}(A)}\ne\XX$, a contradiction.

\begin{Lemma}[Ball criterium  for a finite Baire ergodic decomposition]\label{LemmaCriteionForErgodicityII}
If there exists $\delta>0$ such that every invariant measurable set is either meager or it is residual in some open ball of radius $\delta$, then $\XX$ can be decomposed (up to a meager set) into at most a finite number  of Baire ergodic components.
\end{Lemma}
\begin{proof} Let $\mu$ be a Borel probability measure on $\XX$ with $\supp\mu=\XX$.
For instance, we may consider a countable and dense subset  $\{x_n\,;\,n\in\NN\}$ of $\XX$ and take $\mu=\sum_{n\in\NN}\frac{1}{2^n}\delta_{x_n}$.
Note that $\mathfrak{m}:\mathfrak{I}(f)\to[0,1]$, given by $\mathfrak{m}(U)=\mu(\pi(U))$, is Baire $f$-function, where $\pi$ is the Baire projection (Definition~\ref{DefBaireProj}).

By compactness, there exists $\ell\in\NN$ such that $\inf\{\mu(B_{\delta}(p))\,;\,p\in\XX\}\ge1/\ell$.
Thus, if $U\in\mathfrak{I}(f)$ is a fat set then, by hypothesis, $\pi(U)$ contains an open ball $B$ of radius $\delta$ and so, $\mathfrak{m}(U)=\mu(\pi(U))\ge\mu(B)\ge1/\ell$.
On the other hand, if $U\in\mathfrak{I}(f)$ is meager then $\pi(U)=\emptyset$ and so, $\mathfrak{m}(U)=\mu(\emptyset)=0$.
Hence, the proof follows from Proposition~\ref{PropositionBaireProjectionCriterium}.
\end{proof}

\begin{Lemma}\label{Lemmaiuyg652}
 	If $\Lambda$ is a meager compact set then given any $\delta>0$ there is $\varepsilon>0$ such that $B_{\varepsilon}(\Lambda)=\bigcup_{x\in\Lambda}B_{\varepsilon}(x)$ does not contain any ball of radius $\delta$.
That is, $$\lim_{\varepsilon\to0}\sup\{r>0\,;\,B_{r}(p)\subset B_{\varepsilon}(\Lambda)\text{ and }p\in\XX\}=0.$$
 \end{Lemma}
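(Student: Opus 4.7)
The plan is to argue by contradiction, exploiting the fact that $\XX$, being a compact metric space, is a Baire space, and so every closed meager subset has empty interior. (If a closed set $F$ had an interior point $U \ne \emptyset$, then $U$ itself would be an open, hence Baire, subspace contained in a countable union of nowhere dense subsets of $\XX$, which are still nowhere dense in $U$; this contradicts $U$ being Baire.) In particular $\Lambda$ has empty interior.

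Suppose for contradiction that the conclusion fails: there is some $\delta > 0$ such that for every $n \in \NN$ one can find a point $p_n \in \XX$ with $B_\delta(p_n) \subset B_{1/n}(\Lambda)$. By compactness of $\XX$, after passing to a subsequence, $p_n \to p$ for some $p \in \XX$. I claim that $B_\delta(p) \subset \Lambda$. Indeed, given $x \in B_\delta(p)$, set $\eta = \delta - d(x,p) > 0$; for all sufficiently large $n$ we have both $d(p_n, p) < \eta$ and $1/n < \eta$, so $d(x, p_n) \le d(x,p) + d(p, p_n) < \delta$, i.e., $x \in B_\delta(p_n) \subset B_{1/n}(\Lambda)$. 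Thus $d(x, \Lambda) < 1/n$ for all large $n$, and since $\Lambda$ is closed, $x \in \Lambda$.

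This gives $B_\delta(p) \subset \Lambda$, so $\Lambda$ has nonempty interior, contradicting the observation of the first paragraph. Hence such a $\delta$ cannot exist, proving the displayed limit. There is no real obstacle to this plan; the only subtle point worth being careful about is justifying that a closed meager set in the Baire space $\XX$ has empty interior, but this is a standard consequence of the Baire category theorem applied to the would-be interior.
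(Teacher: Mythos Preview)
Your proof is correct and follows essentially the same contradiction argument as the paper: assume balls $B_\delta(p_n)\subset B_{1/n}(\Lambda)$ exist for all $n$, pass to a convergent subsequence $p_n\to p$ by compactness, and deduce $B_\delta(p)\subset\Lambda$, contradicting that a compact meager set in a Baire space has empty interior. You are in fact somewhat more careful than the paper in justifying the last step and in spelling out the triangle-inequality estimate.
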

 \begin{proof}
 	Otherwise, as $\Lambda$ is compact, there exist $\delta>0$ and a convergent sequence $p_n\in\Lambda$ such that $B_{\delta}(p_n)\subset B_{1/n}(\Lambda)$ $\forall\,n\ge1$.
This implies that $d(x,\Lambda)=0$ for every $x\in B_{\delta}(p)$, where $p=\lim_n p_n$. By compactness, we get that $B_{\delta}(p)\subset\Lambda$, contradicting the hypotheses of $\Lambda$ being meager.
 \end{proof}

Define the {\bf\em large omega limit} of a point $x\in\XX$ as
$$\Omega_f(x) =\bigcap_{r>0}\bigcap_{n\ge0}\bigg(\overline{\bigcup_{m\ge n}{f^*}^m(B_{r}(x))}\bigg).$$
Note that $\Omega_f(x)$ is a well defined nonempty compact set for every $x\in\XX$, even if $x\notin\XX_0$.

\begin{Lemma}\label{LemmaFGHJKUYTR}
There exists a residual set $\cR\subset\XX$ such that $\Omega_f(f(x))=\Omega_f(x)$ $\forall x\in\cR$.
\end{Lemma}
\begin{proof}
Since $\XX_0$ is an open and dense subset of $\XX$, applying Lemma~\ref{Lemmakufyuvooyb} at Appendix, there exists residual set $\cR\subset\XX_0$  such that if $x\in\cR$ then  $f(x)\in\interior(f^*(V))$ for every open set $V\subset\XX$ containing $x$.

If $x\in\cR$ then, given $\delta>0$, we can choose $\delta_1>0$ so that $B_{\delta_1}(f(x))\subset f(B_{\delta}(x))$.
Hence, we get that $\bigcap_{n\ge0}\overline{\bigcup_{m\ge n}(f^*)^m(B_{\delta}(x))}\supset\bigcap_{n\ge0}\overline{\bigcup_{m\ge n}(f^*)^m(B_{\delta_1}(f(x)))}\supset\Omega_f(f(x)).$
That is,
$\bigcap_{n\ge0}\overline{\bigcup_{m\ge n}(f^*)^m(B_{\delta}(x))}\supset\Omega_f(f(x))\text{ for every }\delta>0$.
and so, $\Omega_f(x)\supset\Omega_f(f(x))$.

On the other hand, by the continuity of $f$, taking $\delta>0$, one can choose $\delta_1>0$  so that $f(B_{\delta_1}(x))\subset B_{\delta}(f(x))$.
Hence,
$\bigcap_{n\ge0}\overline{\bigcup_{m\ge n}(f^*)^m(B_{\delta}(f(x)))}$ $\supset$ $\bigcap_{n\ge0}\overline{\bigcup_{m\ge n}(f^*)^m(B_{\delta_1}(x))}$ $\supset$ $\Omega_f(x),$
proving that 
$\bigcap_{n\ge0}\big(\overline{\bigcup_{m\ge n}(f^*)^m(B_{\delta}(f(x)))}\big)$ $\supset$ $\Omega_f(x)\text{ for every }\delta>0$
and, as a consequence,  $\Omega_f(x)\subset\Omega_f(f(x))$.
\end{proof}

The {\bf\em nonwandering  set} of $f$, denoted by $\Omega(f)$, is the set of points $x\in\XX$ such that $V\cap\bigcup_{n\ge1}{f^*}^n(V)\ne\emptyset$ for every open neighborhood $V$ of $x$. It is easy to see that
$$\Omega(f)=\{x\in\XX\,;\,x\in\Omega_f(x)\},$$
that is $\Omega(f)$ is the set of all ``$\Omega$-recurrent'' points of $\XX$ (recall that a point is called recurrent (or ``$\omega$-recurrent'') if $x\in\omega_f(x)$).

A forward invariant set $V$ is called {\bf\em strongly transitive} if $\bigcup_{n\ge0}{f^*}^n(A)=V$ for every nonempty open set (in the induced topology) $A\subset V$. One can check that $V$ is strongly transitive if and only if $\alpha_f(x)\supset V$ for every $x\in V$.

\begin{Theorem}
\label{TheoremFatErgodicAttractors} If there exists $\delta>0$ such that $\overline{\bigcup_{n\ge0}{f^*}^n(U)}$ contains some open ball of radius $\delta$, for every nonempty open set $U\subset\XX$, then $\XX$ can be decomposed (up to a meager set) into a finite number of Baire ergodic components $U_1,\cdots, U_\ell\subset\XX$,  each $U_j$ is an open set and  the attractors $A_j$ associated to  $U_j$ (given by Proposition~\ref{Propositiontop-ergodicAttractors}) satisfy the following properties.
\begin{enumerate}
\item\label{Item(12b)} Each $A_j$ contains some open ball $B_j$ of radius $\delta$ and $A_j=\overline{\interior(A_j)}$.
\item\label{Item(3b)}
\begin{enumerate}
\item Each $A_j$ is transitive and $\omega_f(x)=A_j$ for a residual set of points $x\in\beta_f(A_j)$.
\item If $\varphi:\XX\to\RR$ is a Borel measurable bounded function then for each $A_j$ there exists $a_j\in\RR$ such that $\limsup\frac{1}{n}\sum_{j=0}^{n-1}\varphi\circ f^j(x)=a_j$ for a residual set of points $x\in\beta_f(A_j)$.
\item If $U$ is a Borel subset of $\XX$ then for each $A_j$ there exists $u_j\in[0,1]$ such that $$\limsup_{n\to+\infty}\frac{1}{n}\#\{0\le j<n\,;\,f^j(x)\in U\}=u_j$$
for a residual set of points $x\in\beta_f(A_j)$.

\end{enumerate}
\item\label{Item+}$\interior(\beta_f(A_j))\sim\beta_f(A_j)\sim U_j$ for every $1\le j\le\ell$. In particular, \begin{center}$\beta_f(A_1)\cup\cdots\cup\beta_f(A_{\ell})$ contains an open and dense subset of $\XX$.\end{center}
\item\label{Item(4b)} $\Omega_f(x)\supset A_j$ if $x\in\overline{U_j}$ and $\Omega_f(x)= A_j$ if $x\in U_j$, $\forall\,j$.
In particular $$\Omega(f)\subset\bigg(\bigcup_{j=1}^{\ell}A_j\bigg)\cup\bigg(\XX\setminus\bigcup_{j=1}^{\ell}U_j\bigg),$$ where $\XX\setminus\bigcup_{j=1}^{\ell}U_j$ is a compact set with empty interior.
\end{enumerate}
Furthermore, if $\bigcup_{n\ge0}{f^*}^n(U)$ contains some open ball of radius $\delta$, for every nonempty open set $U\subset\XX$, then the following statements are true.
\begin{enumerate}\setcounter{enumi}{4}
\item\label{Item(5b)}
For each $A_j$ there is a forward invariant set $\ca_j\subset A_j$ containing an open and dense subset of $A_j$ such that $f$ is strongly transitive in $\ca_j$.
Indeed, $\alpha_f(x)\supset \overline{U_j}\supset A_j\supset\ca_j$ for every $x\in\ca_j$.
\item\label{Item(6b)} Either $\omega_f(x)=A_j$ for every $x\in\ca_j$ with $\omega_f(x)\ne\emptyset$ or $A_j$ has sensitive dependence on initial conditions.
\end{enumerate}
\end{Theorem}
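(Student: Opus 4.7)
My plan is to prove the theorem in three stages: verify the criterion of Proposition~\ref{PropositionCriteionForErgodicity} to produce a finite Baire ergodic decomposition, apply Proposition~\ref{Propositiontop-ergodicAttractors} to extract the attractors $A_j$, and then verify items (1)--(6). For the first stage, let $V$ be a fat invariant Baire set and write $V\sim W$ with $W$ open nonempty. Because $V=f^{-1}(V)$ is completely invariant, $\bar{f}^n(W\cap V)\subset V$ and $\bar{f}^n(W\setminus V)\subset\XX\setminus V$; since $W\cap V$ is dense in $W$, continuity of the iterates gives
\[
\overline{V}\;\supset\;\overline{\bigcup\nolimits_{n\ge 0}\bar{f}^n(W\cap V)}\;=\;\overline{\bigcup\nolimits_{n\ge 0}\bar{f}^n(W)}\;\supset\;B_\delta(p)
\]
for some $p\in\XX$. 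A routine Baire-category argument on $B_\delta(p)$ (using $V\triangle W$ meager to first deduce $B_\delta(p)\subset\overline{W}$, then that $V$ is residual in the open dense $W\cap B_\delta(p)\subset B_\delta(p)$) upgrades this to: $V$ is residual in $B_\delta(p)$. Proposition~\ref{PropositionCriteionForErgodicity} then produces the decomposition into Baire ergodic components with $\ell\le\diam(\XX)/(2\delta)$; I take each $U_j$ to be an open representative of its equivalence class.

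Stage two is immediate from Proposition~\ref{Propositiontop-ergodicAttractors}: each $U_j$ yields the unique topological attractor $A_j\subset\overline{U_j}$ with $\omega_f(x)=A_j$ generically on $U_j$, handling the second assertion of item~(\ref{Item(3b)}) and the attracting part of item~(\ref{Item+}). Transitivity of $A_j$ follows because a generic $x\in U_j$ has dense forward orbit in $A_j$. Items~(\ref{Item+}) and (\ref{Item(4b)}) are then bookkeeping: the residual set of generic points sits inside $\beta_f(A_j)$, so $\beta_f(A_j)\sim U_j$, and since $U_j$ is open, $\interior(\beta_f(A_j))\sim U_j$; openness of $U_j$ together with the uniform-convergence argument below (applied to small balls $B_r(x)\subset U_j$) yields $\Omega_f(x)=A_j$ on $U_j$, and an approximation via generic nearby points gives $\Omega_f(x)\supset A_j$ on $\overline{U_j}$, bounding $\Omega(f)$ as claimed.

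The main obstacle is item~(\ref{Item(12b)})---showing $A_j$ itself contains a ball of radius $\delta$, not merely that the closure of some orbit union does. My approach is Baire-category pigeonholing combined with a shrinking-domain argument. Fix $\varepsilon>0$: the sets $U_{N,\varepsilon}:=\bigcap_{n\ge N}f^{-n}(B_\varepsilon(A_j))$ are $G_\delta$, and their union contains the residual generic set of $U_j$, so by Baire some $U_{N_\varepsilon,\varepsilon}$ is residual in a nonempty open $V\subset U_j$. Density of $U_{N_\varepsilon,\varepsilon}$ in $V$ together with continuity give $\overline{\bar{f}^n(V)}\subset\overline{B_\varepsilon(A_j)}$ for every $n\ge N_\varepsilon$, so combining with the hypothesis applied to $V$,
\[
B_\delta(p)\;\subset\;\bigcup_{n<N_\varepsilon}\overline{\bar{f}^n(V)}\;\cup\;\overline{B_\varepsilon(A_j)}.
\]
Now shrink $V$ to a ball of radius $r\to 0$: uniform continuity of the finite stack $\{\bar{f}^0,\ldots,\bar{f}^{N_\varepsilon-1}\}$ makes each $\overline{\bar{f}^n(V)}$ (for $n<N_\varepsilon$) arbitrarily small, so the $N_\varepsilon$ tiny closed sets cannot obstruct balls of radius close to $\delta$; hence $\overline{B_\varepsilon(A_j)}$ is forced to contain a ball of radius $\delta-o(1)$. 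Letting first $r\to 0$ and then $\varepsilon\to 0$, extracting subsequential limits by compactness of $\XX$ and invoking Lemma~\ref{Lemmaiuyg652}, I produce $q^*\in\XX$ with $B_\delta(q^*)\subset A_j$. The equality $A_j=\overline{\interior(A_j)}$ then follows from transitivity of $A_j$ together with forward-invariance of $\interior(A_j)$.

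For the furthermore part (strengthened hypothesis, no closure), I define $\ca_j:=\{x\in A_j:\alpha_f(x)\supset\overline{U_j}\}$; iterating the stronger hypothesis through open subsets of $A_j$ shows $\ca_j$ contains an open and dense subset of $A_j$, and strong transitivity holds by definition. For item~(\ref{Item(6b)}), if $A_j$ fails to have sensitive dependence at some $x_0\in A_j$, continuity plus strong transitivity of $\ca_j$ propagate equicontinuity of forward orbits throughout $\ca_j$; combined with item~(\ref{Item(3b)})'s dense orbit in $A_j$, this forces $\omega_f(x)=A_j$ for every $x\in\ca_j$ with nonempty $\omega$-limit. The main technical difficulty is the ball-extraction in item~(\ref{Item(12b)}): carefully balancing the transient and asymptotic portions of forward orbits so that a concrete ball of radius exactly $\delta$ lands inside the compact attractor $A_j$ rather than in its approximate neighborhood.
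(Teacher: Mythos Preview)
Your overall strategy for the decomposition and for items~(\ref{Item(3b)})--(\ref{Item(4b)}) matches the paper's, and your argument for item~(\ref{Item(12b)}) is a genuinely different and workable alternative. The paper argues by contradiction: if $A_j$ were meager, Lemma~\ref{Lemmaiuyg652} furnishes $\varepsilon>0$ so that $B_\varepsilon(A_j)$ contains no ball of radius $\delta/2$; then the fat forward-invariant set $\bar f^{n_0}(W_j')\subset B_\varepsilon(A_j)$ violates the criterion established in stage one. Your direct ``head/tail'' splitting---fix $N_\varepsilon$, shrink $V_r=B_r(x_0)$ so that the finitely many initial images collapse to the finite orbit segment $\{x_0,\ldots,f^{N_\varepsilon-1}(x_0)\}$, and pass to subsequential limits first in $r$ then in $\varepsilon$---is a clean constructive route that avoids the contradiction and does not really need Lemma~\ref{Lemmaiuyg652} at all (the limit $B_\delta(q^*)\subset A_j$ follows directly once $B_\delta(p^*_\varepsilon)\subset\overline{B_\varepsilon(A_j)}$ for every $\varepsilon$). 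One small caveat: in stage one your deduction ``$B_\delta(p)\subset\overline W$'' from ``$V\triangle W$ meager'' is not quite enough; you actually need non-singularity to get $f^{-n}(W)\sim W$ and hence $\bar f^n(W)\subset\overline W$ for every $n$, which then forces $\overline{\bigcup_n\bar f^n(W)}\subset\overline W$. This is what the paper's one-line invocation of non-singularity is doing.

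The real gap is item~(\ref{Item(5b)}). Your definition $\ca_j=\{x\in A_j:\alpha_f(x)\supset\overline{U_j}\}$ is fine, but ``iterating the stronger hypothesis through open subsets of $A_j$'' does not show $\ca_j$ is open and dense in $A_j$. The stronger hypothesis, applied to an open $V\subset A_j$, produces \emph{some} ball $B_\delta(q_V)\subset\bigcup_n\bar f^n(V)$, and hence every $y\in B_\delta(q_V)$ has a preimage in $V$. But $\alpha_f(y)\supset\overline{U_j}$ requires that the pre-orbit of $y$ visit \emph{every} open $V$, so you would need $y$ to lie in $\bigcap_V B_\delta(q_V)$ over a dense family of $V$'s---and the centres $q_V$ move with $V$, so this intersection may well be empty. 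The paper resolves this by constructing an \emph{invariant measurable} potential $x\mapsto\Omega_f^0(x)\in\KK(\XX)$ (a refined large-omega limit built from the tails $\bigcup_{m\ge n}\bar f^m(B_r(x))$) and invoking Baire ergodicity via Proposition~\ref{Propositionkjuytf9tfg76} to make it constant generically on $U_j$; this pins down a \emph{single} ball $B_{p_j}$ of radius $\delta$ with $x\in\alpha_f(y)$ for generic $x$ and every $y\in B_{p_j}$, whence $\alpha_f(y)\supset\overline{U_j}$ by closedness. Without this ergodic-formalism step (or a substitute producing a common ball), your item~(\ref{Item(5b)}) does not go through. Your item~(\ref{Item(6b)}) sketch also inverts the logic slightly: the negation of sensitive dependence does not give a single equicontinuity point $x_0$, so it is cleaner (as the paper does) to assume some $p\in\ca_j$ has $\emptyset\ne\omega_f(p)\subsetneq A_j$ and use strong transitivity of $\ca_j$ to exhibit sensitivity directly.
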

\begin{proof}
Given a  forward invariant measurable fat set $U$ and an open set $V$ such that $U\sim V$, it follows from Proposition~\ref{PropositionRESUMO} at Appendix that 
\begin{equation}\label{Equatiobjhfyr66}
\text{$U\cap V$ is a residual subset of $\interior({f^*}^n(V))$ for every $n\in\NN$.}
\end{equation}
In particular, $U$ is residual in every nonempty open subset of $V$.

By hypothesis, $\overline{\bigcup_{n\ge0}{f^*}^n(V)}$ contains some open ball $B$ with radius $\delta$ and so, $B\cap \bigcup_{n\ge0}{f^*}^n(V)$ is a dense subset of $B$.
By Proposition~\ref{PropositionRESUMO}, 
 $B$ $\cap$ $\bigcup_{n\ge0}\interior({f^*}^n(V))$ is an open and dense subset of $B$.
Hence, it follows from \eqref{Equatiobjhfyr66} that $U$ is residual in $B$.
That is,
\begin{equation}\label{Equatioescrita}
\text{\em every forward invariant fat measurable set 
is residual in some open ball of radius }\delta.
\end{equation}
Thus, the hypothesis of Theorem~\ref{TheoremFatErgodicAttractors} implies the hypothesis of Lemma~\ref{LemmaCriteionForErgodicityII}, since every invariant set is a forward invariant one.
Hence, $\XX$ can be decomposed (up to a meager set) into at most $\ell\ge1$ Baire ergodic components $W_1,\cdots,W_{\ell}$.

By Proposition~\ref{Propositiontop-ergodicAttractors}, each Baire ergodic component $W_j$ has a unique topological attractor $A_j$ such that $\omega_f(x)=A_j$ for a residual set of points $x\in W_j$.

We claim that $A_j$ is a fat set. Indeed, if $A_j$ is a meager set then it follows from Lemma~\ref{Lemmaiuyg652} above that $\exists\,\varepsilon>0$ such that
\begin{equation}\label{Equaiu679}
	\sup\{r>0\,;\,B_{r}(p)\subset B_{\varepsilon}(A_j)\text{ and }p\in\XX\}<\delta/2.
\end{equation}
Let $n_0\ge1$ be big enough so that $W_j':=\{x\in W_j\ {;}\
f^n(x)\in B_{\varepsilon}(A_j),\,\forall n\ge n_0\}$ is a fat set.
As $f$ is  non-singular and $W_j'$ is a fat set, we have that also ${f^*}^{n_0}(W_j')$ is fat.
As $W_j'$ is forward invariant, ${f^*}^{n_0}(W_j')$ is also a forward invariant set.
Nevertheless,
as ${f^*}^{n_0}(W_j')\subset B_{\varepsilon}(A_j)$, it follows from the inequation  \eqref{Equaiu679} above that $W_j'$ cannot be residual in a ball of radius bigger or equal to $\delta/2$, but this is in contradiction with \eqref{Equatioescrita}.
Therefore $A_j$ is fat set.

As $A_j$ is compact fat set, $\interior(A_j)\ne\emptyset$.
This implies, as $A_j$ is compact forward invariant set, that $A_j\supset\overline{\bigcup_{j\ge0}f^{*n}(\interior(A_j))}$.
Hence, by the theorem's hypothesis, $A_j$ contains some open ball $B_j$ of radius $\delta$.
 
Since $\omega_f(x)=A_j$ for every $x$ in a residual set $R_j\subset W_j$, for each $x\in R_j$ there is some $n_x\ge1$ such that $f^{n_x}(x)\in B_j$.
As $\omega_f(x)=A_j$ residually on $B_j\subset A_j$, we get that there exists $p\in B_j$ such that $\co_f^+(p)$ is a dense subset of $A_j$, in particular, $A_j$ is transitive.
It follows from Proposition~\ref{PropositionRESUMO}, $f^n(p)\in {f^*}^n(B_j)\subset\overline{\interior({f^*}^n(B_j))}\subset A_j$ $\forall\,n\ge0$.
In particular, $d_H(\{f^n(p)\},\interior({f^*}^n(B_j)))=0$ $\forall\,n\ge0$.
This implies that $$d_H\bigg(A_j,\bigcup_{n\ge0}\interior({f^*}^n(B_j))\bigg)=d_H\bigg(\co_f^+(p),\bigcup_{n\ge0}\interior({f^*}^n(B_j))\bigg)=0,$$ which proves  that $\bigcup_{n\ge0}\interior({f^*}^n(B_j))$ is an open and dense subset of $A_j$ and so, $A_j=\overline{\interior(A_j)}$.  
As $W_j\sim U_j:=\bigcup_{n\ge0}f^{-n}(\interior(A_j))$, we have that $U_j$ is a Baire ergodic component with $A_j=\overline{\interior(A_j)}$ being its transitive Topological attractor and $\omega_f(x)=A_j$ residually in $U_j$, proving items {\em (1)} and {\em (2)(a)}.
Items {\em (2)(b)} and {\em (2)(c)} follows from Corollary~\ref{Corollaryjuytf9tfg76}, concluding the proof of items  {\em (\ref{Item(12b)})} and {\em (\ref{Item(3b)})}. 
Therefore, we can consider the open sets $U_j,\cdots,U_{\ell}$, instead of $W_1,\cdots,W_{\ell}$,  as the decomposition (up to a meager set) of $\XX$ into Baire ergodic components.
Furthermore, by definition, if $x\in U_j$ then $f^n(x)\in A_j$ for some $n\ge0$ and so, $\omega_f(x)\subset A_j$, proving that $U_j\subset\beta_f(A_j)$.
Hence, as $\beta_f(A_j)\setminus U_j\subset\XX\setminus\bigcup_{n=1}^{\ell}U_n\sim\emptyset$, we conclude the proof of item {\em(\ref{Item+})}.

Given $p\in\overline{U_j}$ and $\varepsilon>0$, let $p_{\varepsilon}\in B_{\varepsilon}(p)\cap R_j$.
As $\omega_f(p_{\varepsilon})=A_j$, we get that $$\bigcap_{n\ge0}\bigg(\overline{\bigcup_{m\ge n}{f^*}^m(B_{\varepsilon}(p))}\bigg)\supset\omega_f(p_{\varepsilon})=A_j.$$
That is, $\bigcap_{n\ge0}\overline{\bigcup_{m\ge n}{f^*}^m(B_{\varepsilon}(p))}\supset A_j$ for every $\varepsilon>0$, proving that $\Omega_f(x)\supset A_j$ for every $p\in\overline{U_j}$ and concluding the proof of item {\em(\ref{Item(4b)})}.

Now, assume that $\bigcup_{n\ge0}{f^*}^n(A)$ contains some open ball of radius $\delta$, for every nonempty open set $A\subset\XX$.
Define, for $0<r<\delta$, $\Delta_r^n(x)=\bigcup_{m\ge n}{f^*}^m(B_r(x))$.
For $0<\varepsilon<r/2$, note that $\Delta_r^n(x)\setminus B_{\varepsilon}(\partial\Delta_r^n(x))$ is a compact set (\footnote{ Recall that $B_{\varepsilon}(\partial\Delta_r^n(x))=\bigcup_{p\in\partial\Delta_r^n(x)}B_{\varepsilon}(p)$.}) and, as $\Delta_r^n(x)$ contains an open ball of radius $\delta$, $\Delta_r^n(x)\setminus B_{\varepsilon}(\partial\Delta_r^n(x))\ne\emptyset$, indeed, it contains a ball of radius $\delta-\varepsilon$.
Hence, 
$$\Omega_f^r(x):=\lim_{\varepsilon\searrow0}\bigcap_{n\ge0}\big(\Delta_r^n(x)\setminus B_{\varepsilon}(\partial\Delta_r^n(x))\big)\in\KK(\XX)$$
is a well defined nonempty compact set for every $x\in\XX$.
Moreover, for every $x\in\XX$, $\Omega_f^r(x)$ contains an open ball $B_{x,r}$ of radius $\delta$ and
\begin{equation}\label{Equationnnhgoi7}
  B_r(x)\cap\alpha_f(y)\ne\emptyset\;\;\forall y\in B_{x,r}.
\end{equation}
As $\Omega_f^0(x):=\lim_{r\searrow0}\Omega_f^r(x)=\bigcap_{r>0}\Omega_f^r(x)$, it follows from (\ref{Equationnnhgoi7}) above that, for every $x
\in\XX$,
\begin{enumerate}[(a)]
	\item $\Omega_f^0(x)$ contains an open ball $B_x$ of radius $\delta$ and
	\item $x\in\alpha_f(y)$ for every $y\in B_x$.
\end{enumerate}
\begin{claim}
$\psi:\XX\to\KK(\XX)$ given by $\psi(x)=\Omega_f^0(x)$ is a measurable map.
\end{claim}
\begin{proof}[Proof of the claim]
Let $\Delta_r^{n,m}(x)=\bigcup_{j=n}^m{f^*}^j(B_r(x))$ and $\psi_{r,\varepsilon,n,m}:\XX\to\KK(\XX)$ be given by $\psi_{r,n,m}(x)=\Delta_r^{n,m}(x)\setminus B_{\varepsilon}(\partial \Delta_r^{n,m}(x))$. 
As $\psi_{r,\varepsilon,n,m}$ is a continuous map and $\lim_{m}\psi_{r,\varepsilon,n,m}(x)=\Delta_r^n(x)\setminus B_{\varepsilon}(\partial \Delta_r^n(x))$, we get that $\psi_{r,\varepsilon,n}:\XX\to\KK(\XX)$, given by $\psi_{r,\varepsilon,n}(x)=\Delta_r^n(x)\setminus B_{\varepsilon}(\partial \Delta_r^n(x))$, is a measurable map.
Likewise $\psi_{r,\varepsilon}:=\lim_{n}\psi_{r,\varepsilon,n}$, $\psi_{r}:=\lim_{\varepsilon\searrow0}\psi_{r,\varepsilon}$, $\psi=\lim_{r\searrow0}\psi_{r}$ are measurable maps.
\end{proof}
Following the proof of Lemma~\ref{LemmaFGHJKUYTR}, one can show that there exists a residual set $\cR\subset\XX$ such that $\psi\circ f(x)=\psi(x)$ for every $x$ for every $x\in\cR$, i.e., $\psi$ is an almost invariant potential.
Thus, it follows from Corollary~\ref{Corollaryjuytf9tfg76} that, for each $1\le j\le\ell$, there exists a compact set $K_j\in\KK(\XX)$ such that $\psi(x)=K_j$ for a residual set of points $x\in U_j$.
Since $\psi(x)=\Omega_f^0(x)$ and every $\Omega_f^0(x)$ contains an open ball of radius $\delta$, we get that, for each $1\le j\le\ell$, there exists $p_j\in\XX$ such that $K_j\supset B_{\delta}(p_j)$. 
In particular, $\Omega_f^0(x)\supset B_{\delta}(p_j)$ for a residual set of points $x\in U_j$. 
Thus, for every $y\in B_{\delta}(p_j)$, $\alpha_f(y)\ni x$ for a residual set of $x\in U_j$.
By compactness, $\alpha_f(y)\supset\overline{U_j}\supset A_j$ for every $y\in B_{\delta}(p_j)$.
As $\interior(A_j)\ne\emptyset$ and $A_j$ is forward invariant, we have that $A_j\supset B_{\delta}(p_j)$.
Since $\alpha_f(f^j(x))\supset\alpha_f(x)$ always, we get that $\alpha_f(x)\supset\overline{U_j}\supset A_j$ for every $x\in\ca_j:=\bigcup_{n\ge0}{f^*}^n(B_{\delta}(p_j))$, proving that $f$ is strongly transitive in the forward invariant set $\ca_j\subset A_j$.
Furthermore, it follows from the transitivity of $A_j$ and Proposition~\ref{PropositionRESUMO} at Appendix that $\ca_j$ contains an open in dense subset of $A_j$, proving item {\em(\ref{Item(5b)})}. 

Suppose that there exists $p\in\ca_j$ such that $\emptyset\ne\Lambda:=\omega_f(p)\ne A_j$.
By compactness, $\Lambda$ is not a dense subset of $A_j=\overline{\ca_j}$ and so,  $\ca_j\setminus\Lambda\ne\emptyset$.
Choose $q\in\ca_j\setminus\Lambda$ and set $r=d_H(\{q\},\Lambda)>0$. 
Let $n_0\ge0$ be such that $\co_f^+(f^{n_0}(p))\subset B_{r/2}(\Lambda)$.
Given $x\in A_j$ and $\varepsilon>0$ let $n_1\ge0$ and 
$n_2\ge n_1+n_0$ be such that $f^{-n_1}(p)\cap B_{\varepsilon}(x)\ne\emptyset\ne f^{-n_2}(q)\cap B_{\varepsilon}(x)$.
As $f^n(p)\in B_{r/2}(\Lambda)$ for every $n\ge n_0$, we get that ${f^*}^{n_2}(B_{\varepsilon}(x))\cap B_{r/2}(\Lambda)\ne\emptyset$ and $q\in {f^*}^{n_2}(B_{\varepsilon}(x))$, proving that, $\sup_{n\ge1}\diam({f^*}^{n}(B_{\varepsilon}(x)))\ge r/2$ for every $x\in A_j$ and $\varepsilon>0$.
As this implies the sensitive dependence on initial conditions (item {\em(\ref{Item(6b)})}), we conclude the proof of the theorem.
\end{proof}

\subsection{$u$-Baire ergodic components and its topological attractors}
Similarly to Baire ergodic components, we can define the $u$-Baire ergodic components.

\begin{Definition}[$u$-Baire ergodic components]\label{Defioihbohb}
An almost invariant measurable set $U\subset\XX$ is called a {\bf \em $u$-Baire ergodic component of $f$} if $U$ is a fat set, $U=W_f^s(U)$ and $V\sim U$ or $V\sim\emptyset$ for every measurable set $V\subset U$ such that $f^{-1}(V)\sim V=W_f^s(V)$.
\end{Definition}

\begin{Lemma}\label{Lemmagdighfyt5r8}
If $U$ is a fat measurable set such that $f^{-1}(U)\sim U=W_f^s(U)$ then $\widetilde{U}=\bigcup_{j\ge0}f^{-j}\big(\bigcap_{n\ge0}f^{-n}(\pi(U))\big)\subset\bigcap_{n\ge0}f^{-n}(\XX)$ is a fat measurable set, $\widetilde{U}\sim U$,  $f^{-1}\big(\widetilde{U}\big)=\widetilde{U}=W_f^s\big(\widetilde{U}\big)$ and $\widetilde{U}$ is a Baire subspace of $\XX$.
\end{Lemma}
\begin{proof}It follows from Lemma~\ref{LemmAAAAfi75r8} that $\widetilde{U}$ is an invariant measurable set, $\widetilde{U}\sim U$ and $\widetilde{U}$ is a Baire subspace of $\XX$.
Thus, we need only to show that $W_f^s(\widetilde{U})=\widetilde{U}$.

Note that $U_0=\bigcap_{n\ge0}f^{-n}(U)$ is the set of all points $x\in U$ such that $f^n(x)\in U$ $\forall\,n\ge0$, this implies that $f(U_0)\subset U_0$.
Moreover, $W_f^s(U_0)=U_0$.
Indeed, $x\in U_0$ $\implies$ $f^n(x)\in U$ $\implies$ $W_f^s(f^n(x))\subset W_f^s(U)=U$ $\forall n\ge0$ and, as $f^n(W_f^s(x))\subset W_f^s(f^n(x))$, we get that $f^n(W_f^s(x))\subset U$ $\forall n\ge0$.
That is, $W_f^s(x)\subset\bigcap_{n\ge0}f^{-n}(U)=U_0$ and so,
$$U_0\subset W_f^s(U_0)=\bigcup_{x\in U_o}W_f^s(x)\subset U_0.$$

Thus, since $\widetilde{U}=\bigcup_{n\ge0}f^{-n}(U_0)$ and $f^{-n}(W_f^s(p))=W_f^s(f^{-n}(p))=\bigcup_{y\in f^{-n}(p)}W_f^s(y)$, we get that $$W_f^s\big(\widetilde{U}\big)=W_f^s\left(\bigcup_{n\ge0}f^{-n}(U_0)\right)=\bigcup_{n\ge0}W_f^s\left(f^{-n}(U_0)\right)=\bigcup_{n\ge0}f^{-n}\left(W_f^s(U_0)\right)=\bigcup_{n\ge0}f^{-n}(U_0)=\widetilde{U}.$$
\end{proof}

The proof of Corollary~\ref{Corollaryfyt5r8jku4} below is similar to the proof of Lemma~\ref{Lemmakuytrdjku4}.

\begin{Corollary}\label{Corollaryfyt5r8jku4}
Let $U$ be an almost invariant measurable set, 
$U$ is an Baire ergodic component of $f$ if and only if $V\sim U$ or $V\sim\emptyset$ for every   measurable set $V\subset U$ such that $f^{-1}(V)=V=W_f^s(V)$.
\end{Corollary}
\begin{proof}  
Assume that $L\sim U$ or $V\sim\emptyset$ for every   measurable set $V\subset U$ such that $f^{-1}(V)=V=W_f^s(V)$.
We need to show that if $V\sim U$ or $V\sim\emptyset$ for every   measurable set $V\subset U$ such that $f^{-1}(V)\sim V=W_f^s(V)$.
Thus, let $V$ be a measurable set such that $f^{-1}(V)\sim V=W_f^s(V)$.
Since $f$ is non-singular, if $V\sim\emptyset$ then $f^{-1}(V)\sim\emptyset\sim V$.
Hence, we can assume that $V$ is a fat set.
Thus, it follows form Lemma~\ref{Lemmagdighfyt5r8} above that $\widetilde{V}:=\bigcup_{n\ge0}f^{-n}\left(\bigcap_{j\ge0}f^{-j}(V)\right)$ is a measurable invariant set with $f^{-1}\big(\widetilde{V}\big)=\widetilde{V}=W_f^s\big(\widetilde{V}\big)$ and $\widetilde{V}\sim V$.
Therefore, it follows from our assumption that  $\widetilde{V}\sim U$.
As a consequence,  $V\sim U$, proving that $V\sim U$ or $V\sim\emptyset$ for every   measurable set $V\subset U$ with $f^{-1}(V)\sim V=W_f^s(V)$.

Assuming that $U$ is a $u$-Baire ergodic component, since every invariant set is almost invariant, we get that $V\sim U$ or $V\sim\emptyset$ for every  invariant measurable set $V\subset U$ such that $V=W_f^s(V)$.
\end{proof}

Let $\mathfrak{I}^u(f)\subset\mathfrak{A}$ be the sub $\sigma$-algebra of all  measurable sets $f^{-1}(U)=U=W_f^s(U)$.
A {\bf\em $u$-Baire $f$-function} is a map $\mathfrak{m}:\mathfrak{I}^u(f)\to[0,+\infty)$ such that $\mathfrak{m}(\XX)>0$ and 
$$A\cap B\sim\emptyset\implies \mathfrak{m}(A)+\mathfrak{m}(B)\le \mathfrak{m}(A\cup B).$$

\begin{Proposition}[Criterium  for a finite $u$-Baire ergodic decomposition]\label{PropositionCriteionFor-u-Ergodicity}
If there exist a $u$-Baire $f$-function $\mathfrak{m}$ and $\ell\in\NN$ such that  either $\mathfrak{m}(U)=0$ or $\mathfrak{m}(U)\ge \mathfrak{m}(\XX)/\ell$ for every $U\in\mathfrak{I}^u(f)$, then $\XX$ can be decomposed into at most $\ell$ $u$-Baire ergodic components.
\end{Proposition}
\begin{proof}
Given $M\subset\XX$ a fat measurable set such that $f^{-1}(M)=M=W_f^s(M)$, define $\cf(M)$ as the collection of all fat invariant measurable set $U$ contained in $M$ and such that $f^{-1}(U)=U=W_f^s(U)$.
Now the proof follows exactly as the proof of Proposition~\ref{PropositionBaireProjectionCriterium}.
That is, take $M_1=\XX$ and note that $\mathcal{F}(M_1)$ is
non-empty, because $M_1\in\mathcal{F}(M_1)$.
We say that  $A\le A'$ if $A'\setminus A$ is meager.
Using the same argument of Claim~\ref{Claimkhgivi} in the proof of Proposition~\ref{PropositionBaireProjectionCriterium}, we can show that every totally ordered subset $\Gamma\subset\mathcal{F}(M_1)$ is finite (in particular, it has an upper bound), using Zorn's Lemma, there exists a maximal element $U_1\in\mathcal{F}(M_1)$ and, by Corollary~\ref{Corollaryfyt5r8jku4}, $U_1$ is necessarily a $u$-Baire ergodic component.

Let $M_2=\XX\setminus U_1$ and note that it satisfies $f^{-1}(M_2)= M_2= W_f^s(M_2)$. Either $M_2$ is meager or we can use the argument above to $M_2$ and obtain a new $u$-Baire ergodic component $U_2$ inside $\XX\setminus U_1$.
Inductively, as in the proof of Proposition~\ref{PropositionBaireProjectionCriterium}, we  construct a collection of $u$-Baire ergodic components $U_1,\cdots,U_i$ while $\XX\setminus(U_1\cup\cdots\cup U_i)$ is a fat set.
Nevertheless, as $U_j\cap U_k =\emptyset$ when $j\ne k$ and $\mathfrak{m}(U_j)\ge1/\ell$ $\forall j$,  we have that $\frac{k}{\ell}\mathfrak{m}(\XX)\le \mathfrak{m}(U_1)+\cdots+\mathfrak{m}(U_i)\le \mathfrak{m}(\XX)$ and so, this process has to stop at some $k\le\ell$.
As a consequence $\XX\sim U_1\cup\cdots\cup U_{k}$.
\end{proof}

\begin{Lemma}\label{Lemmakjbidtx}
If an almost forward invariant open set $U\subset\XX$ is  transitive, i.e., $B\cap \bigcup_{n\ge0}f^{*n}(A)\ne\emptyset$ for every nonempty open sets $A,B\subset U$, then $\bigcup_{n\ge0}f^{-n}(U)$ is a Baire ergodic component.
\end{Lemma}
\begin{proof}
Writing $V=\bigcup_{n\ge0}f^{-n}(U)$, we have that $f^{-1}(V)=\bigcup_{n\ge1}f^{-n}(U)\subset V$.
That is, $f^{-1}(V)\setminus V=\emptyset$.
On the other hand, as $f$ is non-singular and $f(U)\setminus U\sim\emptyset$, we get that $$V\setminus f^{-1}(V)=\left( U\cup\bigcup_{n\ge1}f^{-n}(U)\right)\setminus \bigcup_{n\ge1}f^{-n}(U)= U\setminus \bigcup_{n\ge1}f^{-n}(U)\subset$$
$$\subset U\setminus f^{-1}(U)\subset f^{-1}(f(U))\setminus  f^{-1}(U)=
f^{-1}(f(U)\setminus U))\sim\emptyset,$$
proving that $f^{-1}(V)\triangle V\sim\emptyset$, i.e., $V$ is an almost invariant set.

Let $L\subset V$ be a given almost invariant measurable fat set.
Thus, $T:=\pi(L)\cap V\sim L$ is an almost invariant open subset of $V$.
By the definition of $V$, there exists $a\ge0$ such that $f^a(T)\cap U$ is a fat set. 
As $f$ continuous and non-singular, it follows from Proposition~\ref{PropositionRESUMO} at Appendix that $f^{*a}(T)\sim\interior f^{*a}(T)\ne\emptyset$.
Thus $W:=\interior(f^{*a}(T))\cap U\ne\emptyset$.
By hypothesis, $A\cap\bigcup_{n\ge0}f^{*n}(W)\ne\emptyset$ for every nonempty open set $A\subset U$.
Since, by Proposition~\ref{PropositionRESUMO},  $$\interior(f^{*n}(W))\sim f^{*n}(W)\subset\overline{\interior(f^{*n}(W))},$$ we can conclude that $M:=U\cap \bigcup_{n\ge0}\interior(f^{*n}(W))$ is a dense subset of $U$.
As a consequence, $\bigcup_{j\ge0}f^{-j}(M)$ is an open and dense subset of $V$.
That is, $V\sim \bigcup_{j\ge0}f^{-j}(M)$.
Now, since $f$ is non-singular, and $T\sim f^{-1}(T)$, we get that $W\setminus T\sim\emptyset$ and so, $\interior(f^{*n}(W))\setminus T\sim\emptyset$ $\forall n\ge0$. 
Thus, $M\setminus T\subset \left(\bigcup_{n\ge0}\interior(f^{*n}(W))\right)\setminus T\sim\emptyset$ and so, 
$$\left(\bigcup_{j\ge0}f^{-j}(M)\right)\setminus T\sim \left(\bigcup_{j\ge0}f^{-j}(M\setminus T)\right)\sim\emptyset,$$
proving that $V\sim T$, as $T\subset V$.
Since $L\sim T\sim V$ for any given  almost invariant fat set $L\subset V$, we conclude that $V$ is a Baire ergodic component for $f$.
\end{proof}

One can use Lemma~\ref{Lemmakjbidtx} above to provide more examples of non transitive Baire ergodic maps. A trivial example of such maps is the following.
Given a continuous non-singular transitive map $h:\XX\circlearrowleft$, consider the  continuous and non-singular map $g:\XX\times\{1,2\}\circlearrowleft$, defined by $g(x,j)=(x,1)$.
Note that $g$ is not transitive and, by Lemma~\ref{Lemmakjbidtx}, it is Baire ergodic.

For a nontrivial example, consider a quadratic map $f:[0,1]\circlearrowleft$, $f(x)=4tx(1-x)$, with a parameter $0<t<1$ such that $f$ has a cycle of intervals. That is, there exists closed interval $I_1,\cdots,I_{\ell}\subset[0,1]$ such that $f|_{J}$ is transitive, where $J=I_1\cup\cdots\cup I_{\ell}$.
It is well known that, given a small $\varepsilon>0$, we can choose $t\in(0,1)$ so that $\leb(J)<\varepsilon$ for some circle of intervals $J$.
On the other hand, we always have that $\leb\left(\bigcup_{n\ge0}f^{-n}(J)\right)=1$.
In particular, $V:=\bigcup_{n\ge0}f^{-n}(\interior(J))$ is an open and dense subset of $[0,1]$.
Since $f(J)=J$, we have $f(\interior(J))\subset f(J)=J\sim\interior J$, i.e., $\interior J$ is an almost forward invariant open set.
Hence, it follows from  Lemma~\ref{Lemmakjbidtx} that $V$ is a Baire ergodic component of $f$ and so, since $V\sim[0,1]$, $f$ is Baire ergodic (and it is not transitive).  

\begin{Proposition}[The topological attractor of a $u$-Baire ergodic component]\label{Proposition-u-ergAttrac} 
If $U\subset \XX$ is a $u$-Baire ergodic component of $f$, then there exists a unique topological attractor $A\subset\overline{U}$ attracting a residual subset of $U$.
Indeed, $\omega_f(x)=A$ for a residual set of points $x\in U$.
Furthermore, if $U$ is not a Baire ergodic component of $f$ then $A$ is a meager set.
\end{Proposition}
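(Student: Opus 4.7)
The plan is to adapt the proof of Proposition~\ref{Propositiontop-ergodicAttractors} by replacing Proposition~\ref{Propositionkjuytf9tfg76} with its $u$-ergodic counterpart, Proposition~\ref{Propositionkjuiiiifg7655}. By Lemma~\ref{Lemmagdigi00fi75r8} we may replace $U$ by an $f$-invariant Baire set $U'\sim U$ on which $f$ is $u$-Baire ergodic. Consider the map $\psi:U'\to\KK(\overline U)$ defined by $\psi(x)=\omega_f(x)$. Baire measurability of $\psi$ is obtained exactly as in the proof of Proposition~\ref{Propositiontop-ergodicAttractors}, by writing $\psi$ as the pointwise limit of the continuous maps $\psi_n(x)=\overline{\co_f^+(f^n(x))}$; and $\psi\circ f=\psi$ is immediate. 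The new ingredient is that $\psi$ is a $u$-potential: if $y\in W_f^s(x)$ then $d(f^n(x),f^n(y))\to 0$ forces $\omega_f(x)=\omega_f(y)$, so $\psi$ is constant along stable sets. Therefore Proposition~\ref{Propositionkjuiiiifg7655}(1) applies and produces some $A\in\KK(\overline U)$ with $\omega_f(x)=A$ for every $x$ in a residual subset of $U'$.

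That $A$ is a topological attractor follows routinely. Its basin contains the residual set $\{x\in U'\,;\,\omega_f(x)=A\}$ inside $U'\sim U$, so $\beta_f(A)$ is fat. For any compact forward invariant $A'\subsetneqq A$, every $x$ in this residual set satisfies $\omega_f(x)=A\not\subset A'$, hence $x\in\beta_f(A)\setminus\beta_f(A')$, showing that this difference is fat too; uniqueness of $A$ follows from the uniqueness of the almost-constant value of $\psi$.

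For the final statement I argue the contrapositive: if $A$ is not meager, then $f|_U$ is Baire ergodic. Since $A$ is closed and has the Baire property, fatness of $A$ yields an open set $O\subset\XX$ with $A\sim O$; then $O\setminus A$ is an open set contained in the meager set $A\triangle O$, hence empty in a Baire space, so $O\subset A$ and $\interior(A)\neq\emptyset$. Taking any $x$ in the residual set $\{\omega_f(x)=A\}$, the orbit of $x$ enters $\interior(A)$ at some time $n_0$; since $y=f^{n_0}(x)\in A$ has $\omega_f(y)=A$ and $A$ is forward invariant, the orbit of $y$ lies in $A$ and is dense in $A$. Hence $f|_A$ is transitive, and by Lemma~\ref{Lemma98uyhji} it is Baire ergodic.

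The main obstacle is to transfer this Baire ergodicity from $A$ back to $U'$. Given an invariant fat Baire set $V\subset U'$, the plan is to show that $V\cap A$ is both $f|_A$-invariant and fat in $A$: the invariance is immediate from $f^{-1}(V)=V$ together with $A\subset f^{-1}(A)$, and Baire ergodicity of $f|_A$ would then force $V\cap A$ to be residual in $A$; applying the same argument to the also-fat invariant set $U'\setminus V$ would produce two disjoint residual subsets of $A$, the desired contradiction. The delicate step is fatness of $V\cap A$ in $A$: from the density of a generic $V$-orbit in $A$ one easily obtains $A\subset\overline V$, but this does not immediately show that $V\cap A$ contains a nonempty relatively open subset of $A$ modulo meager (indeed $V\cap A$ can be empty when $A$ is meager, as in the contraction $f(x)=x/2$). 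To close the gap, I plan to use bimeasurability of $f$ together with the invariance of $V$ to replace $V$ by the invariant open refinement produced by Lemma~\ref{Lemmagdigi00fi75r8} applied to an open $O_V\sim V$, and then show that this refinement must meet $\interior(A)$ in a nonempty open set. Controlling this last step --- passing from generic intersection to honest open intersection under the dynamics --- is where I expect to spend the most care.
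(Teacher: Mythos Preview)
Your argument for the existence and uniqueness of $A$ is correct and matches the paper exactly: the map $x\mapsto\omega_f(x)$ is a Baire measurable, $f$-invariant $u$-potential, so Proposition~\ref{Propositionkjuiiiifg7655}(1) gives a generic constant value $A$.

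For the contrapositive in the ``furthermore'' clause, you and the paper agree up through showing $\interior(A)\ne\emptyset$ and $f|_A$ transitive. After that you diverge. You try to lift Baire ergodicity from $A$ to $U'$ by showing that any fat invariant $V\subset U'$ must meet $A$ in a set that is fat \emph{in $A$}, and you correctly identify this as the delicate point you have not yet closed. The paper sidesteps this entirely: rather than working with an arbitrary invariant Baire set, it proves directly that $U$ is \emph{asymptotically transitive} and then invokes Proposition~\ref{Propositionaa0ytd6881} (this is where bimeasurability enters). Concretely, since $\omega_f(x)=A$ generically, the set $A_0=U\cap\bigcup_{n\ge0}f^{-n}(\interior A)$ is residual in $U$; given open $B_1,B_2$ meeting $A_0$, some iterates $\bar f^{\,n}(B_1)$, $\bar f^{\,m}(B_2)$ land in $\interior(A)$ as open sets, and transitivity of $f|_A$ then produces the required intersection. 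This is cleaner because it never needs to compare a generic Baire set against $A$.

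Your route can in fact be completed: from $f^{-1}(V)=V$ one checks $V\cap f^{-n}(\interior A)=f^{-n}(V\cap\interior A)$, so $V\cap A_0=\bigcup_n f^{-n}(V\cap\interior A)$; since $V\cap A_0$ is fat and $f$ is non-singular, $V\cap\interior A$ cannot be meager. But the asymptotic-transitivity argument reaches Baire ergodicity of $f|_U$ without ever isolating this step, and packages the use of bimeasurability into a single citation of Proposition~\ref{Propositionaa0ytd6881}.
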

\begin{proof} The proof is similar to the proof of Proposition~\ref{Propositiontop-ergodicAttractors}. 
Indeed, let $\widetilde{\XX}$ and $\psi$ be as in Lemma~\ref{LemmalOMEGAmeasu}
and set $\widetilde{U}:=\bigcup_{n\ge0}f^{-n}\left(\bigcap_{j\ge0}f^{-j}(\pi(U))\right)$.
It follows from Lemma~\ref{Lemmagdighfyt5r8} that $\widetilde{U}\subset \widetilde{\XX}$ is a Baire subspace of $\XX$, a measurable set, $\widetilde{U}\sim U$ and $f^{-1}(\widetilde{U})=\widetilde{U}=W_f^s(\widetilde{U})$.

By Lemma~\ref{LemmalOMEGAmeasu}, $\psi$ is an invariant Baire potential on $\widetilde{\XX}$. Moreover, since $\omega_f(x)=\omega_f(y)$ for every $y\in W_f^s(x)$,  we get that $\psi|_{\widetilde{U}}$ is a $u$-Baire potential for $f|_{\widetilde{U}}$.
Thus, we can apply Proposition~\ref{Propositionkjuiiiifg7655} to $\psi|_{\widetilde{U}}$ and conclude that there exists $A\in\KK(\XX)$ such that $\omega_f(x)=A$ for a residual set of points $x\in\widetilde{U}$.
Hence, $\omega_f(x)=A$ for a residual set of points $x\in U$, since $U\sim\widetilde{U}$.

If $A$ is not meager then, since $A$ is compact, $\interior A\ne\emptyset$ and, as $\omega_f(x)=A$ for a residual set of points $x\in U$, we get that
\begin{equation}\label{Equatuoibv}
  \co_f^+(x)\cap\interior A\ne\emptyset\text{ for a residual set of points }x\in U.
\end{equation}
In particular, $\co_f^+(x)\cap\,\interior A\ne\emptyset$ for a residual set of points $x\in \interior A$ and so, $\overline{\co_f^+(x)}=A$ for a residual set of $x\in A$.
This implies that, given nonempty open sets $B_0,B_1\subset\interior A$ then $B_0\cap\bigcup_{n\ge0}f^{*n}(B_1)\ne\emptyset$, proving that $\interior A$ is transitive.
Furthermore, it follows from \eqref{Equatuoibv} that $U\sim\bigcup_{n\ge0}f^{-n}(\interior A)$.
Since, by Lemma~\ref{Lemmakjbidtx}, $\bigcup_{n\ge0}f^{-n}(\interior A)$ is an Baire ergodic component of $f$, we conclude that if $A$ is a fat set then $U$ is a Baire ergodic component of $f$.
That is, if $U$ is not a Baire ergodic component then $A$ is a meager set.
\end{proof}

\subsection{Statistical attractors for Baire and $u$-Baire ergodic components}\label{SectionSTATAT}

Milnor's definition of attractors deals only with the topological aspects of the asymptotical behavior of the orbits of a fat set of points, saying little about the statistical properties of those points.
To analyze the region that is frequently visited by a large set of points, a variation of Milnor's definition called {\em statistical attractor} was introduced by Ilyashenko (see, for instance, \cite{Ily}).

As defined in Section~\ref{SecStatOfMainsR}, the (upper) {\bf\em   visiting frequency} of $x\in\XX$ to $V\subset\XX$ is given by
\begin{equation}\label{Equationhgfdfb345}
  \tau_x(V)=\tau_{x,f}(V)=\limsup_{n\to\infty}\frac{1}{n}\#\{0\le j<n\,;\,f^{j}(x)\in V\}.
\end{equation}
and the {\bf\em statistical $\omega$-limit set} of $x\in\XX$ as
$$\omega_f^{\star}(x)=\{y\,;\,\tau_x(B_{\varepsilon}(y))>0\text{ for all }\varepsilon>0\}.$$
According to Ilyashenko (see page 148 of \cite{AAIS}), the {\bf\em statistical basin of attraction} of a compact set $A\subset\XX$ is defined as  $$\beta_f^{\star}(A)=\{x\,;\,\emptyset\ne\omega_f^{\star}(x)\subset A\}.$$

If $M$ is a compact Riemannian manifold and $h:M\circlearrowleft$ is a continuous map, a compact set $A\subset M$ is called a {\bf\em Ilyashenko's statistical attractor} for $h$ when  $\leb(\beta_h^\star(A))>0$ and there is no compact set $A' \subsetneqq A$ such that $\leb(\beta_h^\star(A)\setminus\beta_h^\star(A'))=0$.
Combining Ilyashenko's definition of a statistical attractor with Milnor's definition of a topological attractor, we define the topological statistical attractor as follows $($\footnote{ In \cite{Ca}, the Ilyashenko's statistical attractor is discussed, and an interesting variation of such a metrical  attractor is presented.}$)$.

\begin{Definition}[Topological statistical attractor]\label{DefTopStsAtt}
A compact set $A\subset\XX$ is called a {\em topological statistical attractor} for the map $f:\XX_0\to\XX$ when $\beta_f^{\star}(A)$ and $\beta_f^{\star}(A)\setminus\beta_f^{\star}(A')$ are fat sets for every compact set $A' \subsetneqq A$.
\end{Definition}

A natural approach to prove the existence of a (topological) statistical attractor for a Baire or $u$-Baire ergodic component  is to follow the proof of Proposition~\ref{Propositiontop-ergodicAttractors}, that is, showing that $\omega_f^{\star}:\XX\ni x\mapsto\omega_f^{\star}(x)\in\KK(\XX)$ is a Baire potential and, as $\omega_f^{\star}$ is $f$-invariant,  applying Proposition~\ref{Propositionkjuytf9tfg76} to conclude that $\omega_j^{\star}(x)$ is almost constant. 
Our proof here will be slightly different; therefore, we present the {\em statistical spectrum} of a point, which will have other applications throughout the paper.

\subsubsection{The statistical spectrum}\label{Subsubsecststspec}

Let $\cm^1(\XX)$ the set of all Borel probability measures on $\XX$.
Let $\cd=\{\varphi_1,\varphi_2,\varphi_3,\cdots\}$  be a countable dense subset of $C(\XX,[0,1])$ and 
\begin{equation}\label{eqdistprob}
  \dd(\nu,\mu)=\sum_{n=1}^{+\infty}\frac{1}{2^n}\bigg|\int\varphi_n d\mu-\int\varphi_n d\nu\bigg|.
\end{equation}

It is well know that $\dd$ is a metric on $\cm^{1}(\XX)$ compatible with the weak topology and $(\cm^1(\XX),\dd)$ is a compact metric space.
Let $\KK(\cm^1(\XX))$ be the set of all nonempty compact subsets of $\cm^1(\XX)$ and consider the Hausdorff metric $\dd_H$ on it.
Note that $(\KK(\cm^{1}(\XX)),\dd_H)$ is also a complete metric space.

Let $\widetilde{\XX}=\bigcap_{j\ge0}f^{-j}(\XX)=f^{-1}(\widetilde{\XX})$ and define the map $\Agemo_f:\widetilde{\XX}\to\KK(\cm^{1}(\XX))$, where  $\Agemo_f(x)$ is the set of all accumulation points of  the {\em empirical measures} generated by $x$, $\left\{\frac{1}{n}\sum_{j=0}^{n-1}\delta_{f^j(x)}\right\}_{n\in\NN}$,  in the weak$^{\star}$ topology, i.e., \begin{equation}\label{EquationAgemo}
  \Agemo_f(x)=\bigg\{\mu\in\cm^1(\XX)\,;\,\dd\bigg(\frac{1}{n_k}\sum_{j=0}^{n_k-1}\delta_{f^j(x)},\mu\bigg)\to0\text{ for some sequence }n_k\nearrow+\infty\bigg\}
\end{equation}
The set $\Agemo_f(x)$ is the {\bf\em statistical spectrum} of $x$ by $f$.

\begin{Lemma}\label{LemmaAGEMOmensu}
$\Agemo_f$ is a measurable map. 
\end{Lemma}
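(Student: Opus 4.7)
The plan is to exhibit $\Agemo_f$ as a pointwise limit, with respect to the Hausdorff metric $\dd_H$, of a sequence of Borel measurable maps $\XX'\to\KK(\cm^1(\XX))$, and then use the fact that a pointwise limit of Borel measurable maps into a metric space is Borel measurable.

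First I would consider, for each $n\ge 1$, the empirical measure map $\psi_n\colon\XX'\to\cm^1(\XX)$ given by $\psi_n(x)=\frac{1}{n}\sum_{j=0}^{n-1}\delta_{f^j(x)}$. Since every forward orbit of a point in $\XX'$ stays in $\XX_0$ and $f$ is continuous on $\XX_0$, the function $x\mapsto \int\varphi\,d\psi_n(x)=\frac{1}{n}\sum_{j=0}^{n-1}\varphi(f^j(x))$ is continuous on $\XX'$ for every $\varphi\in C(\XX,\RR)$; this makes $\psi_n$ continuous with respect to the metric $\dd$ on $\cm^1(\XX)$. Next, for integers $N\le m$, I set $F_{N,m}(x)=\{\psi_N(x),\psi_{N+1}(x),\dots,\psi_m(x)\}\in\KK(\cm^1(\XX))$; a finite subset of a metric space depends $\dd_H$-continuously on its elements, so each $F_{N,m}$ is continuous.

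Now define $F_N(x)=\overline{\{\psi_n(x)\,;\,n\ge N\}}$. I would invoke the standard fact that for an increasing sequence of compact subsets $K_1\subset K_2\subset\cdots$ of a compact metric space one has $\dd_H\bigl(K_m,\overline{\bigcup_\ell K_\ell}\bigr)\to 0$; applied to $K_m=F_{N,m}(x)$, this gives $F_{N,m}(x)\to F_N(x)$ as $m\to\infty$. Hence each $F_N$ is Borel measurable as a pointwise $\dd_H$-limit of continuous maps. Finally, by the very definition of accumulation points in \eqref{EquationAgemo}, $\Agemo_f(x)=\bigcap_{N\ge 1}F_N(x)$, and $\{F_N(x)\}_{N}$ is a decreasing sequence of nonempty compact subsets of the compact space $\cm^1(\XX)$; nonemptiness of $\Agemo_f(x)$ follows from compactness. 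The dual fact that for a decreasing sequence of nonempty compacts $K_1\supset K_2\supset\cdots$ in a compact metric space one has $\dd_H\bigl(K_N,\bigcap_\ell K_\ell\bigr)\to 0$, applied to $K_N=F_N(x)$, yields $F_N(x)\to\Agemo_f(x)$ in $\dd_H$. Therefore $\Agemo_f$ is the pointwise $\dd_H$-limit of the Borel measurable maps $F_N$, and is itself Borel measurable.

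No step is particularly difficult; the only points demanding a small argument are the two Hausdorff convergence lemmas for monotone sequences of compact sets in a compact metric space, and both are immediate compactness-and-contradiction arguments. The cleanest potential pitfall is to verify that $\psi_n$ is genuinely well defined and continuous on all of $\XX'$, which is why the definition $\XX'=\bigcap_{j\ge 0}f^{-j}(\XX)$ of the domain is used.
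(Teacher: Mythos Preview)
Your proof is correct and follows essentially the same route as the paper: build the finite-set maps $F_{N,m}$ (the paper's $K_{\ell,t}$), pass to the tail closure $F_N$ (the paper's $K_\ell$) as a pointwise $\dd_H$-limit, and then take the decreasing intersection to reach $\Agemo_f$. You have supplied more detail on the two monotone Hausdorff-convergence steps than the paper does, but the structure and ideas are identical.
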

\begin{proof}
Given $x\in\XX$, let $\mu_n(x)=\frac{1}{n}\sum_{j=0}^{n-1}\delta_{f^j(x)}$, $K_{\ell,t}(x)=\bigcup_{j=\ell}^{\ell+t}\{\mu_j\}\in\KK(\cm^{1}(\XX))$, $K_{\ell}(x)=\overline{\bigcup_{j\ge\ell}\{\mu_j\}}\in\KK(\cm^{1}(\XX))$. As $\XX\ni x\mapsto K_{\ell,t}(x)\in\KK(\cm^{1}(\XX))$ is a continuous map, hence measurable, and $K_{\ell}(x)=\lim_{t\to\infty}K_{\ell,t}(x)$, we get that $\XX\ni x\mapsto K_{\ell}(x)\in\KK(\cm^{1}(\XX))$ is measurable.
Moreover, as $\Agemo_f(x)=\lim_{\ell\to\infty}K_{\ell}(x)=\bigcap_{\ell\ge1}K_{\ell}(x)\in\KK(\cm^{1}(\XX))$, we conclude that $\Agemo_f$ is measurable.
\end{proof}

\begin{Lemma}\label{LemmaEquationjhvyurd8unm} If $X$ is a Borel subset of $\XX$ and $g:X\circlearrowleft$ is a measurable map then 
$\omega_g^{\star}(x)=\overline{\bigcup_{\mu\in\Agemo_g(x)}\supp\mu}$\; for every $x\in X$.
\end{Lemma}
\begin{proof}Since $g(X)\subset X\subset\XX$ and $\XX$ is compact, we have that $\omega_g^{\star}(x)$ is a nonempty compact subset of $\XX$ and $\Agemo_g(x)$ is a nonempty compact subset of $\cm^1(\XX)$ for every $x\in X$ (nevertheless, we may have $\omega_g^{\star}(x)\not\subset X$ and $\Agemo_g(x)\not\subset\cm^1(X)$).
As $\supp\mu\subset\omega_g^{\star}(x)$ for every $\mu\in\Agemo_g(x)$, we get that $\omega_g^{\star}(x)\supset\bigcup_{\mu\in\Agemo_g(x)}\supp\mu$.
Moreover, $\omega_g^{\star}(x)\supset\overline{\bigcup_{\mu\in\Agemo_g(x)}\supp\mu}$, since $\omega_g^{\star}(x)$ is a compact. 
Conversely, if $p\in\omega_g^{\star}(x)$ and $\varepsilon>0$ then $\mu_n(B_{\varepsilon}(p))\ge\tau(B_{\varepsilon}(p))/2$ for infinitely many $n\in\NN$. Thus, there exists $\mu_{\varepsilon}\in\Agemo_g(x)$ such that $\mu_{\varepsilon}(B_{\varepsilon}(p))$. In particular, $\supp\mu_{\varepsilon}\cap B_{\varepsilon}(p)\ne\emptyset$ for every $\varepsilon>0$, proving that $p\in\overline{\bigcup_{\mu\in\Agemo_g(x)}\supp\mu}$.
Hence, $\omega_g^{\star}(x)\subset\overline{\bigcup_{\mu\in\Agemo_g(x)}\supp\mu}$.
\end{proof}

\begin{Corollary}
\label{CorLemmaEquationjhvyurd8unm}
$\omega_f^{\star}(x)=\overline{\bigcup_{\mu\in\Agemo_f(x)}\supp\mu}$\; for every $x\in\XX$.
\end{Corollary}
\begin{proof}
If $x\notin\widetilde{\XX}$ then $\omega_f(x)=\omega_f^{\star}(x)=\emptyset$ as well as $\Agemo_f(x)=\emptyset$.
Thus, we can assume that $x\in\widetilde{\XX}$.
In this case, taking $g=f|_{\widetilde{\XX}}$, we have that $\omega_f^{\star}(x)=\omega_g^{\star}(x)$ and $\Agemo_f(x)=\Agemo_g(x)$.
Thus, it follows from Lemma~\ref{LemmaEquationjhvyurd8unm} applied to $g$ that $\omega_f^{\star}(x)=\overline{\bigcup_{\mu\in\Agemo_f(x)}\supp\mu}$.
\end{proof}

\begin{Proposition}[The topological statistical attractor of a $u$-Baire ergodic component]\label{PropositionStatisticalAttractorsU-Baire}
If $U\subset \XX$ is a $u$-Baire ergodic component of $f$, then there exists a unique topological statistical  attractor $\ca\subset\overline{U}$ attracting (statistically) a residual subset of $U$.
Moreover, $\omega_f^{\star}(x)=\ca$ for a residual set of points $x\in U$ and $\ca\subset A$, where $A$ is the topological attractor of $U$ (given by Proposition~\ref{Proposition-u-ergAttrac}).
\end{Proposition}

\begin{proof} 
Note that $\Agemo_f(f(x))=\Agemo_f(x)=\Agemo_f(y)$ for every $x\in\widetilde{\XX}:=\bigcap_{n\ge0}f^{-n}(\XX)$ and $y\in W_f^s(x)$.
Thus, since $\Agemo_f:\widetilde{\XX}\to\KK(\cm^1(\XX))$ is measurable map (Lemma~\ref{LemmaAGEMOmensu}) and $\KK(\cm^1(\XX))$ is a completed separable metric space, we get that $\Agemo_f$ is a $u$-Baire potential for $f|_{\widetilde{\XX}}$.

Since $U$ is a $u$-Baire ergodic component, $U$ is a fat set and $f^{-1}(U)\sim U=W_f^s(U)$.
Thus, it follows from Lemma~\ref{Lemmagdighfyt5r8} that 
$\widetilde{U}=\bigcup_{j\ge0}f^{-j}\big(\bigcap_{n\ge0}f^{-n}(\pi(U))\big)\subset\widetilde{\XX}$ is a fat measurable set, $\widetilde{U}\sim U$,  $f^{-1}\big(\widetilde{U}\big)=\widetilde{U}=W_f^s\big(\widetilde{U}\big)$ and $\widetilde{U}$ is a Baire subspace of $\XX$.

Hence, it follows from Proposition~\ref{Propositionkjuiiiifg7655}, applied to $f|_{\widetilde{U}}$ and $\Agemo_f|_{\widetilde{U}}$, that there exists $\cu\in\KK(\cm^1(\XX))$ such that $$\Agemo_f(x)=\cu$$
for a residual set of points $x\in \widetilde{U}$.
As, by Corollary~\ref{CorLemmaEquationjhvyurd8unm}, 
$\omega_f^{\star}(x)=\overline{\bigcup_{\mu\in\Agemo_f(x)}\supp\mu}$, we get that $$\omega_f^{\star}(x)=\ca:=\overline{\bigcup_{\mu\in\cu}\supp\mu}$$
for a residual set of points $x\in \widetilde{U}\sim U$.
\end{proof}

\section{Applications of the ergodic formalism and proofs of mains theorems}\label{SectionAplications}

We begin this section using Proposition~\ref{Propositionkjuytf9tfg76} to prove a generalization of Theorem~\ref{mainTheoTrans}.

\begin{Theorem}\label{Theoremkviduro}
Let $X$ be a Baire space. If a non-singular map $f:X\circlearrowleft$ is  continuous and transitive then the following statements are true.
\begin{enumerate}
\item Given a Borel measurable function $\varphi:X\to\RR$ with $\limsup_{n}\frac{1}{n}\sum_{j=0}^{n-1}\varphi\circ f^{j}(x)\in\RR$ for every $x\in X$, there exists $r\in\RR$ such that $\limsup_{n}\frac{1}{n}\sum_{j=0}^{n-1}\varphi\circ f^{j}(x)=r$ for a residual set of points $x\in X$.
\item Given a Borel set $A\subset X$, there exists $\theta\in[0,1]$ such that $\limsup_{n}\frac{1}{n}\#\{0\le j<n\,;\,f^{j}(x)\in A\}=\theta$ for a residual set of points $x\in X$.
\end{enumerate}
\end{Theorem}
\begin{proof}
Since $X$ is a Baire space and $f$ is continuous, non-singular and transitive, it follows from Lemma~\ref{Lemma98uyhji} that $f$ is Baire ergodic.
Thus, both items above follows from items {\em (3)} and {\em (4)} of Proposition~\ref{Propositionkjuytf9tfg76}.
\end{proof}

\begin{proof}[\bf Proof of Theorem~\ref{mainTheoTrans}]
Since a compact metric space is a Baire space, Theorem~\ref{mainTheoTrans} is a particular case of Theorem~\ref{Theoremkviduro} above.
\end{proof}

\begin{proof}[\bf Proof of Theorem~\ref{Theoremjhvuvpb}]
Let $\mathfrak{I}(f)$ be the sub $\sigma$-algebra of all $f$ invariant Borel sets and
$\mathfrak{m}:\mathfrak{I}(f)\to[0,1]$ given by $$\mathfrak{m}(U)=\mu\left(\bigcup_{n\ge0}\interior(f^n(\pi(U)))\right),$$ where $\pi$ is the Baire projection (Definition~\ref{DefBaireProj}).
Given $U,V\in\mathfrak{I}(f)$, it follow from  Proposition~\ref{PropositionRESUMO} at Appendix that $U$ is residual in the open set $\interior(f^n(\pi(U)))$ and $V$ is residual in the open set $\interior(f^{\ell}(\pi(V)))$ for every $n,\ell\ge0$.
So, if $\interior(f^n(\pi(U)))\cap \interior(f^{\ell}(\pi(V)))\ne\emptyset$ for some $n,\ell\ge0$, we get that $U\cap V\not\sim\emptyset$.
That is,
\begin{equation}\label{Equagtiiu543}
  U\cap V\sim\emptyset\implies \interior(f^n(\pi(U)))\cap \interior(f^{\ell}(\pi(V)))=\emptyset,\; \forall n,\ell\ge0.
\end{equation}

Since, $\pi(U)\cup\pi(V)=\pi(U\cup V)$ always, it follows from \eqref{Equagtiiu543} that $\mathfrak{m}(U)+\mathfrak{m}(V)=\mathfrak{m}(U\cup V)$ $\forall U,V\in\mathfrak{I}(f)$ with $U\cap V\sim\emptyset$, proving that $\mathfrak{m}$ is a Baire $f$-function.

Taking $\gamma:=\inf\left\{\mu\left(\bigcup_{n\ge0}\interior\big(f^n(B_{\varepsilon}(x))\big)\right)\,;\,x\in\XX\text{ and }\varepsilon>0\right\}$ and $\NN\ni\ell\ge1/\gamma$, it follows from the theorem hypothesis that $\mathfrak{m}(U)\ge1/\ell$ for every non meager set $U\in\mathfrak{I}(f)$.
Hence, it follows from Proposition~\ref{PropositionBaireProjectionCriterium} that $\XX$ can be decomposed (up to a meager set) into a collection $U_1,\cdots, U_{\ell}$ of Baire ergodic components.
Using Proposition~\ref{Propositiontop-ergodicAttractors}, each ergodic component $U_j$ has a topological attractor $A_j$ such that $\beta_f(A_j)\sim U_j$ and $\omega_f(x)=A_j$ for a residual set of points $x\in\beta_f(A_j)$, proving item {\em (1)}. 
Items {\em (2)} and {\em (3)} follow from Corollary~\ref{Corollaryjuytf9tfg76} applied to each $U_j$.
\end{proof}

\subsection{Maps with abundance of  historic behavior}\label{SectionMWAHB}
 Let $\XX_0$ be a measurable subset of a compact metric space $\XX$ and $f:\XX_0\to\XX$ a measurable map.
As observed in Section~\ref{SecStatOfMainsR}, a point $x\in\bigcap_{n\ge0}f^{-n}(\XX)$ has historic behavior when $\frac{1}{n}\sum_{j=0}^{n-1}\varphi\circ f^j(x)$ does not converge for some continuous function $\varphi:\XX\to\RR$. This means that $\frac{1}{n}\sum_{j=0}^{n-1}\delta_{f^j(x)}$ does not converge in the weak$^{\star}$ topology,  or equivalently, that $\#\Agemo_f(x)\ge2$.
Let us  denote the set of $x\in\XX$ with historic behavior by $HB(f)$.

We say that a map $f$ has {\em abundance of historic behavior} if $HB(f)$ is a fat set.
As it was proved by Dower \cite{Do} and Takens \cite{Ta08}, a generic point in a basic set of an Axiom A diffeomorphism has historic behavior. This holds true for a generic point in the basin of attraction of a non-periodic transitive hyperbolic attractor as well.
In essence, the presence of a non-periodic transitive hyperbolic attractor implies the abundance of historic behavior!
There exists an extensive bibliography about historic behavior (for instance, see  \cite{BKNRS,CV,CCSV,CTV,DOT,EKS,FKO,FV,Ga,KS16,KS17,LR,LST,LW,MY,NKS,Ta95,Th,Ti,Ya}), in particular about the topological entropy and Hausdorff dimension of the set of points with historic behavior.
Pesin and Pitskel \cite{PP} showed that, in the full shift $\sigma:\Sigma_2^+\circlearrowleft$, the topological entropy of the set of with historic behavior is equal to the entropy of the whole system, i.e., $h_{top}(\sigma|_{HB(\sigma)})=h_{top}(\sigma)=\log2$.
Barreira and Schmeling \cite{BS} showed that the full Hausdorff dimension of  $HB(\sigma)$ in the shift space $\Sigma_2^+$.

  A well-known example of dynamics having an open set of points with historic behavior is Bowen's Eye (Figure~\ref{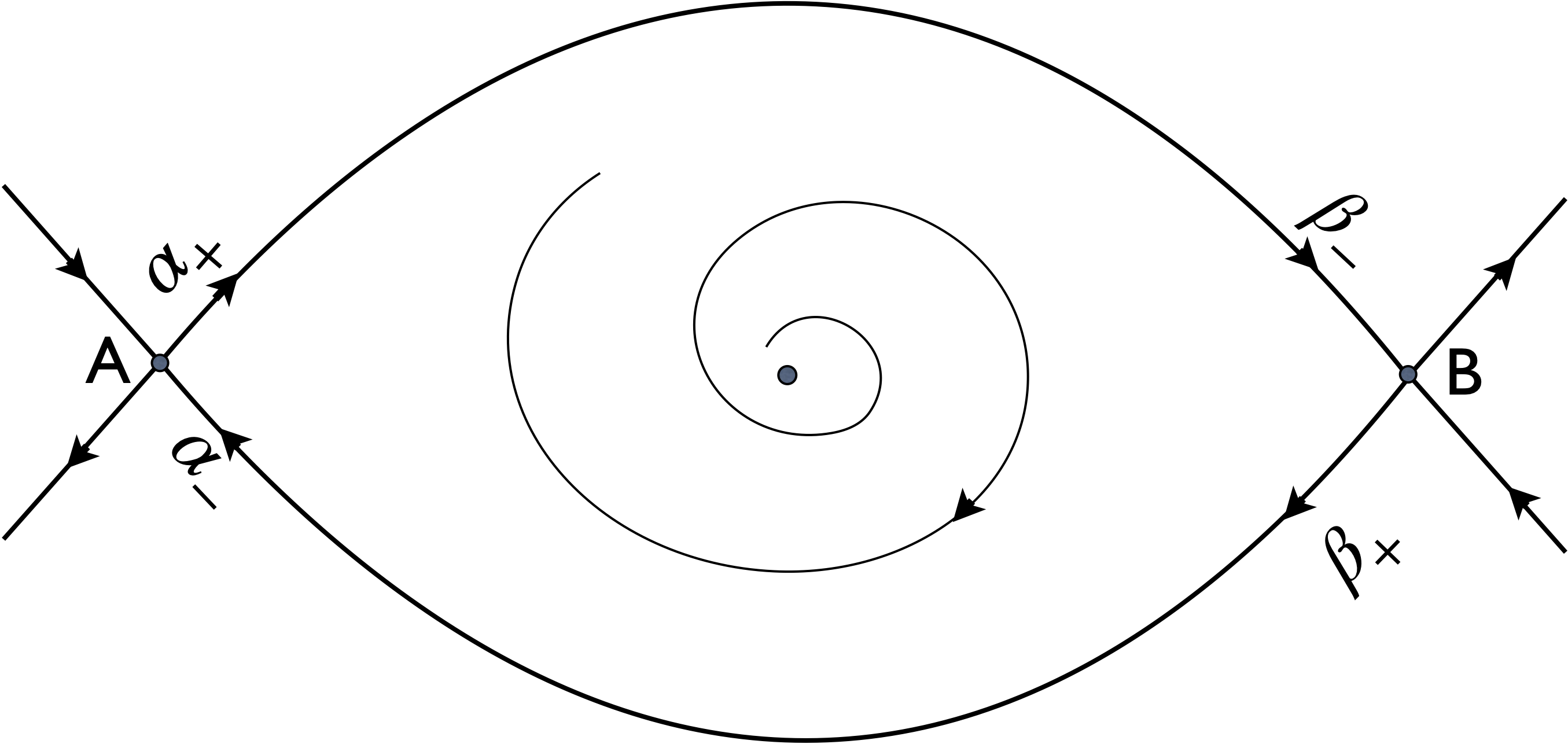}), attributed to Bowen by Takens \cite{Ta95}.
  A somehow old question was whether it would be possible to regularize the oscillations of the averages along the orbit of  the points by taking higher order averages.
  Nevertheless, Jordan, Naudot and Young \cite{JNY} showed, using a classical result from Hardy
  \cite{Ha}, that if time averages
    $\frac1n\sum_{j=0}^{n-1}\varphi\circ f^j(x)$ of a bounded function  $\varphi:\XX\to\RR$ do not converge, then
  {\em all higher order averages (Césaro or H\"older) do not exist either}.

\begin{figure}
\begin{center}\includegraphics[scale=.15]{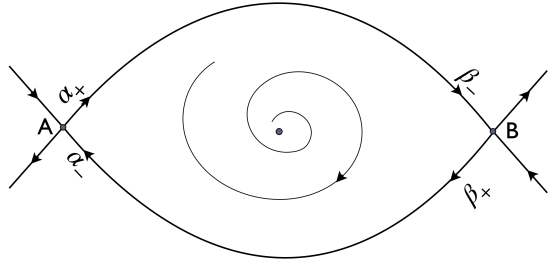}
\caption{A planar flow with divergent time averages attributed to Bowen.}\label{BowensEye.png}
\end{center}
\end{figure}

In \cite{ArP}, the authors used  a (Caratheodory) metric measure constructed from the pre-measured $\tau_x$, the upper  visiting frequency (defined by Equation~ \ref{Equationhgfdfb345}), to associate an invariant measure $\eta_x$ with each point $x$ in the phase space of a given dynamical system.
If $\mu$ is an ergodic invariant probability measure then $\eta_x=\mu$ for $\mu$-almoste every $x$.
In the case of the Bowen's Eye flow, for all wandering points $x$ (an open and dense set with full Lebesgue measure), $\eta_x$ is exactly the expected measure if we were able to regularize the Birkhoff averages.
Indeed, $\eta_x=\eta:=(\frac{|\alpha_-|}{|\alpha_-|+\beta_+})\delta_A+(\frac{|\beta_-|}{|\beta_-|+\alpha_+})\delta_B$ for every wandering point $x$,
where $A$ and $B$ are the saddle singularities of the flow and
 $\alpha_{\pm},\beta_{\pm}$ being  the eigenvalues of $A$ and $B$ (see Figure~\ref{BowensEye.png}) (\footnote{ The condition given by Takens (\cite{Ta95}) to assure the divergence of the time averages is $\big(\frac{|\alpha_-|}{|\alpha_-|+\beta_+}\big)\big(\frac{|\beta_-|}{|\beta_-|+\alpha_+}\big)>1$ and this implies that $2>\big(\frac{|\alpha_-|}{|\alpha_-|+\beta_+}\big)+\big(\frac{|\beta_-|}{|\beta_-|+\alpha_+}\big)>1$, showing that $\eta$ is a finite measure, but not a probability measure.}).
That is, in the ``Caratheodory sense'' one can regularize the Bowen's Eye.
Nevertheless, it was shown in \cite{ArP} that hyperbolicity may imply not only abundance of historic behavior, but also abundance of {\em wild historic behavior}.
A point $x$ has {\bf\em  wild historic behavior} when $\eta_x(U)=\infty$ for every nonempty open set $U$.

\begin{theorem}[\cite{ArP}]\label{TheoremWildHistBeh}
 \label{mthm:genericallywild}
    The set of points with wild historic behavior in 
\begin{enumerate}[(i)]
\item every strongly transitive topological one-sided  Markov chain with a
      denumerable set of symbols $($\footnote{ The original hypothesis of item {\em (i)} at Theorem~\ref{TheoremWildHistBeh} in \cite{ArP} is that the Markov chain is topologically mixing, nevertheless a strongly transitive map with a periodic point can be decomposed into a finite collection of disjoints sets such that the first return map to one of those sets are topologically exact and so, topologically mixing.}$)$;
\item every open continuous transitive and positively expansive map of a compact metric space;
\item each local homeomorphism defined on an open dense subset of a compact space admitting an induced full branch Markov map;
\item the support of a non-atomic expanding invariant probability measure $\mu$ for a $C^{1+}$ local diffeomorphism away from a non-flat critical/singular set on a compact manifold;
\item the basin of attraction $\beta_f(\Lambda)$ of any transitive hyperbolic attractor $\Lambda$, except when $\Lambda$ is an attracting periodic orbit;
 \end{enumerate}
is a topologically generic subset (denumerable intersection of open and dense subsets).	
\end{theorem}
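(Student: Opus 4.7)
The plan is to apply the Ergodic Formalism of Section~\ref{SectionBaireErgodicity} uniformly across the five settings, reducing wild historic behavior to a Baire-generic statement about the statistical spectrum. First, I would verify in each of (i)--(v) that $f$ (restricted to the appropriate invariant Baire space) is Baire ergodic, or in setting (v) $u$-Baire ergodic. In (i)--(iii) this follows from asymptotic transitivity together with Proposition~\ref{Propositionaa0ytd6881}: strong transitivity, open expansiveness combined with transitivity, and the full-branch Markov structure on an invariant fat set each yield asymptotic transitivity. Setting (iv) uses the $C^{1+}$ expanding structure away from the critical/singular set together with the non-atomicity of $\mu$ to give asymptotic transitivity on $\supp\mu$. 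For (v), the transitivity of the Axiom~A attractor $\Lambda$ combined with its stable foliation produces $u$-Baire ergodicity on $\beta_f(\Lambda)$ via Proposition~\ref{PropositionCriteionFor-u-Ergodicity}, and the hypothesis that $\Lambda$ is not an attracting periodic orbit is what prevents unique ergodicity.

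Second, once (u-)Baire ergodicity is established, Lemma~\ref{LemmaAGEMOmensu} makes the statistical spectrum $\Agemo_f$ an $f$-invariant measurable Baire potential, so Proposition~\ref{Propositionkjuytf9tfg76} (respectively Proposition~\ref{Propositionkjuiiiifg7655}) yields a single compact set $\mathfrak{A}\subset\cm^{1}(f)$ with $\Agemo_f(x)=\mathfrak{A}$ generically. To pin down $\mathfrak{A}=\cm^{1}(f)$, I would fix a countable dense family $\{\mu_n\}_{n\in\NN}\subset\cm^{1}(f)$ of ergodic invariant probabilities (in each case available through dense periodic measures via specification/shadowing, or Bernoulli-type measures in (i)) and verify that for every pair $(n,\varepsilon)$ the set
$$E_{n,\varepsilon}=\bigg\{x\,:\,\exists\,k\ge1\text{ with }\dd\bigg(\tfrac{1}{k}\sum_{j=0}^{k-1}\delta_{f^j(x)},\mu_n\bigg)<\varepsilon\bigg\}$$
is open (by continuity of empirical measures in $x$) and dense (by a specification/shadowing construction producing orbit segments approximating $\mu_n$ inside any open set). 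The intersection over $n\in\NN$ and $\varepsilon=1/m$ is then residual, on which $\Agemo_f(x)\supset\overline{\{\mu_n\}}=\cm^{1}(f)$, forcing $\mathfrak{A}=\cm^{1}(f)$.

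Third, I would derive wildness of $\eta_x$ from the identity $\Agemo_f(x)=\cm^{1}(f)$. Equation~(\ref{Equationjhvyurd8unm}) already gives $\omega_f^*(x)=\bigcup_{\mu\in\cm^{1}(f)}\supp\mu$, which is dense in the ambient space in each of (i)--(v). For the Caratheodory pre-measure $\tau_x$ of \cite{AP}, one has $\tau_x(B)\ge\sup_n\mu_n(B)$ on every continuity open ball $B$; by choosing ergodic $\mu_n$ concentrated on arbitrarily small pieces of any fixed open $U\subset\XX$, the sums $\sum_i\tau_x(B_i)$ over fine open covers of $U$ are bounded below by rearrangements $\sum_i\mu_{n(i)}(B_i)$ that can be made arbitrarily large, and the Caratheodory construction of \cite{AP} then yields $\eta_x(U)=\infty$.

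The main obstacle is this third step: the explicit Caratheodory construction of $\eta_x$ from $\tau_x$ must be invoked carefully, and one must actually exhibit, in each of the five cases, a dense family of ergodic measures supported on arbitrarily small open subsets so that the cover sums provably diverge. A secondary difficulty, in case (v), is that the natural family of ergodic measures lives on the attractor $\Lambda$ rather than on the basin $\beta_f(\Lambda)$; the $u$-Baire ergodic formalism of Proposition~\ref{Propositionkjuiiiifg7655} is precisely what is needed to transport a residual set from $\Lambda$ back to $\beta_f(\Lambda)$ by collapsing along stable manifolds.
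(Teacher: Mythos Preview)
The paper does not prove this theorem. It is stated as a result quoted from \cite{AP} (Araujo--Pinheiro), with no argument supplied here; the paper merely uses it as motivation for the subsequent discussion of strongly transitive and strongly $u$-transitive systems. So there is no ``paper's own proof'' against which to compare your proposal.

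That said, your outline conflates two distinct things. Steps one and two (establishing (u-)Baire ergodicity and then showing $\Agemo_f(x)=\cm^1(f)$ generically) reproduce what the paper itself later does in Proposition~\ref{PropositionMataLeao} and Theorem~\ref{TheoremUEo33rHB}; this is fine, but it only yields \emph{historic} behavior, i.e.\ non-convergence of Birkhoff averages. It does not by itself give \emph{wild} historic behavior, which is the statement that the Caratheodory outer measure $\eta_x$ built from the pre-measure $\tau_x$ assigns infinite mass to every open set. Your third step is where the actual content of \cite{AP} lies, and your sketch there is not a proof: the inequality $\tau_x(B)\ge\sup_n\mu_n(B)$ on balls does not automatically force $\sum_i\tau_x(B_i)\to\infty$ over refining covers, because the Caratheodory construction takes an infimum over covers, and one must control that infimum, not merely exhibit large cover sums. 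The argument in \cite{AP} goes through a rather specific combinatorial construction (shadowing long periodic segments with controlled return times) to force the required divergence; the Ergodic Formalism of this paper is not designed to recover that step, and indeed the present paper makes no claim to reprove the wildness result.
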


Although the theorem above shows a very complicated and unpredictable behavior for the forward orbit of generic points in most of the well known dynamical systems, we can use Baire and $u$-Baire ergodicity to extract statistical information about systems with abundance of historic behavior or even with abundance of wild historic behavior.
Indeed, the maps of items {\em (i)} to {\em (iv)} above are strongly transitive, the map of item {\em (v)} is strongly $u$-transitive (see the definition in Section~\ref{SectionStngTrans} below) and, as one can see in the next section, we determine the topological statistical attractors and calculate the (upper) Birkhoff averages of any continuous function along  the orbits of generic points with historic behavior for such maps (see Theorem~\ref{TheoremUEorHB} and \ref{TheoremUEorHBinject} and Corollary~\ref{CorollaryAxA} below).

\subsection{Strongly transitive maps}\label{SectionStngTrans}
 Strongly transitive maps (or sets)  appears profusely in dynamics $($\footnote{ All transitive, continuous and piecewise monotone interval maps, expanding maps of a connected compact manifold, transitive circle homeomorphisms, transitive translations of a compact metrizable topological group,  the shift map $\sigma:\Sigma_n^+\circlearrowleft$, $n\ge2$ and Viana's maps are examples of strongly transitive maps.
Moreover, one can use $f|_{\ca_j}$ of item {\em (5)} in Theorem~\ref{TheoremFatErgodicAttractors} to produce many examples of strongly transitive maps. See also \cite{PV} to more examples and properties of strongly transitive maps.}$)$
and Theorem~\ref{TheoremUEorHB} presents a dichotomy for those maps, a strongly transitive map is either uniquely ergodic or has abundance of historic behavior.
Moreover, this theorem shows a strong connection between the statistical behavior of generic orbits and the set of the invariant probability measures.

Given a Baire metric space $\XX$ and an open set $U\subset\XX$, we say that a continuous map  $f:\XX\circlearrowleft$ is a {\bf\em strongly transitive on $U$} if $\bigcup_{n\ge0}f^n(V)\supset U$ for every open set $V\subset U$ (this means that $\overline{\co_f^-(x)}\supset U$ for every $x\in U$). 
In the spirit of $u$-Baire ergodicity, a continuous map $f:\XX\circlearrowleft$ is called {\bf\em strongly $u$-transitive} on an open set $U\subset\XX$ when $\overline{W_f^s(\co_f^-(x))}\supset U$ for every $x\in U$.
Of course that all strongly transitive maps are strongly $u$-transitive, as $\co_f^-(x)\subset\co_f^-(W_f^s(x))=W_f^s(\co_f^-(x))$.

Let us denote {\bf\em the set of all ergodic invariant Borel probability measures of $f$} by $\cm_e^1(f)$. Note that, if  $\mu\in\cm_e^1(f)$ then $\beta_f(\mu)\ne\emptyset$, as $\mu(\beta_f(\mu))=1$.

\begin{Proposition}\label{PropositionMataLeao}
Let $\XX$ be a separable Baire metric  space and $f:\XX\circlearrowleft$ a continuous map.
If we take $\XX_0=\{x\in\XX\,;\,\overline{\co_f^-(W_f^s(x))}=\XX\}$ then $\Agemo_f(x)\supset\{\mu\in\cm^1(f)\,;\,\beta_f(\mu)\cap\XX_0\ne\emptyset\}\supset\{\mu\in\cm_{e}^1(f)\,;\,\mu(\XX_0)>0\}$ for a residual set of points $x\in\XX$.
\end{Proposition}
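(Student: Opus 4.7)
The plan is to reduce the possibly uncountable collection $M:=\{\mu\in\cm^1(f):\beta_f(\mu)\cap\XX_0\ne\emptyset\}$ to a countable one via separability of $(\cm^1(\XX),\dd)$, and then invoke the Baire category theorem. Since $M\subset\cm^1(\XX)$ and the latter is a separable metric space in the weak$^*$ topology, fix a countable dense subset $\{\mu_k\}_{k\in\NN}\subset M$ and, for each $k$, a witness $p_k\in\beta_f(\mu_k)\cap\XX_0$; by definition of the basin, $\Agemo_f(p_k)=\{\mu_k\}$.

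The key observation is that $\Agemo_f(y)=\Agemo_f(p)$ whenever $p\in\XX_0$ and $y\in W_f^s(p)$. Indeed, in the (compact) setting in which $\Agemo$ was introduced, every continuous $\varphi:\XX\to\RR$ is uniformly continuous, so $d(f^n(y),f^n(p))\to 0$ forces $|\varphi\circ f^n(y)-\varphi\circ f^n(p)|\to 0$; by Ces\`aro averaging, $\frac{1}{n}\sum_{j=0}^{n-1}\big(\varphi\circ f^j(y)-\varphi\circ f^j(p)\big)\to 0$, and the accumulation values of the empirical measures of $y$ and $p$ must coincide. Applied with $p=p_k$ this gives $\mu_k\in\Agemo_f(y)$ for every $y\in W_f^s(p_k)$, which is a dense subset of $\XX$ since $p_k\in\XX_0$.

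Set $R_k:=\{x\in\XX:\mu_k\in\Agemo_f(x)\}$. Using~\eqref{EquationAgemo}, $\mu\in\Agemo_f(x)$ iff for every $\ell,N\in\NN$ there exists some $n\ge N$ with $\dd\big(\frac{1}{n}\sum_{j=0}^{n-1}\delta_{f^j(x)},\mu\big)<1/\ell$; since for each fixed $n$ the map $x\mapsto\dd\big(\frac{1}{n}\sum_{j=0}^{n-1}\delta_{f^j(x)},\mu_k\big)$ is continuous, the resulting expression
\[
R_k=\bigcap_{\ell,N\in\NN}\bigcup_{n\ge N}\Big\{x\in\XX:\dd\Big(\tfrac{1}{n}\sum_{j=0}^{n-1}\delta_{f^j(x)},\mu_k\Big)<\tfrac{1}{\ell}\Big\}
\]
is a $G_\delta$. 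By the previous paragraph it contains the dense set $W_f^s(p_k)$, so it is a dense $G_\delta$ in the Baire space $\XX$ and hence residual. Consequently $R:=\bigcap_{k\ge 1}R_k$ is residual, and for every $x\in R$ the closed subset $\Agemo_f(x)\subset\cm^1(\XX)$ contains $\{\mu_k\}_{k\in\NN}$, so $\Agemo_f(x)\supset\overline{\{\mu_k\}}\supset M$.

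This gives the first inclusion; the second follows at once from Birkhoff's ergodic theorem: if $\mu\in\cm_e^1(f)$ with $\mu(\XX_0)>0$ then $\mu(\beta_f(\mu))=1$, whence $\mu(\beta_f(\mu)\cap\XX_0)=\mu(\XX_0)>0$, so $\beta_f(\mu)\cap\XX_0\ne\emptyset$ and $\mu\in M$. The only real obstacle is the possible uncountability of $M$, circumvented by the separability reduction to $\{\mu_k\}$ together with the closedness of $\Agemo_f(x)$ in $\cm^1(\XX)$, which upgrades ``$\Agemo_f(x)\supset\{\mu_k\}_{k\in\NN}$'' to ``$\Agemo_f(x)\supset M$''.
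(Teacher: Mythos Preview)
Your proof is correct and follows essentially the same strategy as the paper's: reduce to a countable dense family $\{\mu_k\}\subset M$ via separability of $\cm^1(\XX)$, show for each $\mu_k$ that the set $R_k=\{x:\mu_k\in\Agemo_f(x)\}$ is residual using the density of $W_f^s(p_k)$, intersect, and upgrade to $M$ by closedness of $\Agemo_f(x)$. The only real difference is in establishing residuality of $R_k$: you exhibit $R_k$ directly as a $G_\delta$ (via the accumulation-point characterization and continuity of $x\mapsto\dd(\tfrac1n\sum_{j<n}\delta_{f^j(x)},\mu_k)$) containing the dense set $W_f^s(p_k)$, whereas the paper builds explicit open $(1/m)$-dense sets $W_r(m)$ by thickening finite $(1/t)$-dense subsets of $\beta_f(\mu)$. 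Your route is somewhat cleaner; the paper's construction is more hands-on but amounts to the same thing.
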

\begin{proof}
	Let $\dd$ be a metric on $\cm^1(\XX)$ compatible with the weak$^{\star}$ topology.
Given $x\in\XX$ and $\ell\in\NN$, let $\delta_{\ell,x}=\frac{1}{\ell}\sum_{j=0}^{\ell-1}\delta_{f^j(x)}\in\cm^1(\XX)$.
Consider any $\mu\in\cm^1(f)$, with $\beta_f(\mu)\cap\XX_0\ne\emptyset$.
Let $p\in\beta_f(\mu)\cap\XX_0$, that is, $\Agemo_f(p)=\{\mu\}$ and $\overline{\co_f^-(W_f^s(p))}=\XX$.
As $\overline{\co_f^-(W_f^s(p))}=\XX$ and $\Agemo_f(y)=\Agemo_f(p)$ for every $y\in\co_f^-(W_f^s(p))$, we get that $\beta_f(\mu)$ is a dense set in $\XX$. 

Given $r>0$ and $n\in\NN$, let $$V(r,n)=\{x\in\beta_f(\mu)\,;\,\dd(\delta_{m,x},\mu)<r\;\forall\,m\ge n\}.$$

As $V(r,1)\subset V(r,2)\subset V(r,3)\subset\cdots$ and $\bigcup_{n\in\NN}V(r,n)=\beta_f(\mu)$, given $t\in\NN$ there is $n(t)\ge t$ such that $V(r,n(t))$ is a $(1/t)$-dense set in $\beta_f(\mu)$ and so, sinse $\beta_f(\mu)$ is dense in $\XX$, $V(r,n(t))$ is a $(1/t)$-dense set in $\XX$ (i.e., $B_{1/t}(V(r,n(t)))=\bigcup_{x\in V(r,n(t))}B_{1/t}(x)=\XX$).
As $\XX$ is separable, $V(r,n(t))$ admits a countable $(1/t)$-dense subset.
That is, there is countable set $V'(r,n(t))\subset V(r,n(t))$ such that $B_{1/t}(V'(r,n(t)))=\XX$.
It follows from the continuity of $f$  that there exists $\varepsilon(r,n(t),y)>0$ such that $$\dd(\delta_{n,x},\mu)<r\text{ for every }x\in B_{\varepsilon(r,n(t),y)}(y)\text{ and }y\in V'(r,n(t)).$$
Note that the set $W_r(m)=\bigcup_{t\ge m}\bigcup_{y\in V'(r,n(t))}B_{\varepsilon(r,n(t),y)}(y)$ is an open and $(1/m)$-dense set for every $m\in\NN$.
Moreover, if $x\in W_r(m)$ then  $\dd(\delta_{x,n},\mu)<r$  for some $n\ge m$.
Defining $W_r=\bigcap_{m\in\NN}W_r(m)$, we get that $W_r$ is residual and for each $x\in W_r$ there is $\ell_j\to+\infty$ such that $\dd(\delta_{\ell_j,x},\mu)<r$.
Finally, we have that $W(\mu)=\bigcap_{n\in\NN}W_{1/n}$ is also a residual set and $\mu\in\Agemo_f(x)$ for every $x\in W(\mu)$. 
Taking a countable and dense set $\{\mu_1,\mu_2,\mu_3,\cdots\}\subset\{\mu\in\cm^1(f)\,;\,\beta_f(\mu)\cap\XX_0\ne\emptyset \}$, we get that $W=\bigcap_{n\in\NN}W(\mu_n)$ is a residual set and, by the compactness of $\Agemo_f(x)$, also that $\Agemo_f(x)=\overline{\{\mu\in\cm^1(f)\,;\,\beta_f(\mu)\cap\XX_0\ne\emptyset \}}$ for every $x\in W$, which completes the proof. 
\end{proof}

In \cite{DGS}, Denker, Grillenberger and Sigmund called the points satisfying $\Agemo_f(x)=\cm^1(f)$ the {\em points of maximal oscillation}.
We observe that the connection between historical behavior, maximal oscillation, and {\em specification} has been studied by many authors.
Moreover, in many cases, maps with specification are strongly transitive (or strongly $u$-transitive).  In those cases, one can use Corollary~\ref{CoroMataLeao}, instead of the specification property, to show the maximal oscillation for generic points.

\begin{Corollary}\label{CoroMataLeao}
If $\XX$ is a separable Baire metric space and $f:\XX\circlearrowleft$ is a continuous strongly transitive map then the set of all points with maximal oscillation is a residual subset of $\XX$.
\end{Corollary}

\begin{proof}[\bf Proof of Theorem~\ref{TheoremFatErgodicAttractorsMAIN}]
We note that Theorem~\ref{TheoremFatErgodicAttractorsMAIN} satisfies all the hypothesis of Theorem~\ref{TheoremFatErgodicAttractors} and so, all the items of Theorem~\ref{TheoremFatErgodicAttractorsMAIN} follow directly from Theorem~\ref{TheoremFatErgodicAttractors}, with the exception of item {\em(vii)}. 

Item {\em(vii)} is a consequence of Proposition~\ref{PropositionMataLeao}.
Indeed, suppose that exist $p,q\in\ca_j\cap\per(f)$ such that $\co_f^+(p)\cap\co_f^+(q)=\emptyset$.
Let $$\mu=\frac{1}{\#\co_f^+(p)}\sum_{x\in\co_f^+(p)}\delta_{x}\;\text{ and }\;\nu=\frac{1}{\#\co_f^+(q)}\sum_{x\in\co_f^+(q)}\delta_{x}.$$
Taking $g=f|_{A_j}$, it follows from item {\em (5)} of Theorem~\ref{TheoremFatErgodicAttractorsMAIN} that  $\alpha_f(x)\supset\overline{\ca_j}=A_j$ for every $x\in\ca_j$.
This implies that $\co_f^-(x)$ is dense in $A_j$ when $x\in\ca_j$.
As $A_j$ is a forward invariant set, we get that $\co_g^-(x)$ is dense in $A_j$ for every $x\in\ca_j$.
Finally, as we always have that $\co_g^-(x)\subset \co_g^-(W_g^s(x))$, we get that $(A_j)_0:=\{y\in A_j\,;\,\overline{\co_g^-(W_g^s(x))}=A_j\}\supset\ca_j$ and so, by Proposition~\ref{PropositionMataLeao}, $\Agemo_f(x)=\Agemo_g(x)\supset\{\mu,\nu\}$ for a residual set of points $x\in A_j$.
As $\beta_f(A_j)\sim\bigcup_{n\ge0}f^{-n}(A_j)$, it follows that  $\Agemo_f(x)\supset\{\mu,\nu\}$ for a residual set of points $x\in\beta_f(A_j)$, proving that, generically, the points of $\beta_f(A_j)$ have historical behavior.
\end{proof}

\begin{proof}[\bf Proof of Theorem~\ref{TheoremUEorHB}]
Since $\XX$ is compact,  $f$ continuous and the whole $\XX$ is strongly transitive, it follows from  Proposition~\ref{PropositionMataLeao} that
\begin{equation}\label{eqlkjd567}
  \Agemo_f(x)=\cm^1(f)\ne\emptyset\text{ for a residual set of points $x\in\XX$.}
\end{equation}
Thus,  by \eqref{eqlkjd567},  if $f$ is not uniquely ergodic  then  $\#\Agemo_f(x)>1$ for a residual set of points $x\in\XX$ and so, a generic point $x\in\XX$ has historic behavior, showing item {\em (\ref{itemhgn})}.
Moreover, the poof of item {\em (\ref{itemngyj46uu})} follows  from \eqref{eqlkjd567} and Lemma~\ref{LemmaEquationjhvyurd8unm}, applied to $X=\XX$ and $g=f$.

Given $\varphi\in C^0(\XX,\RR)$ and $x\in\XX$, it follows from the convergence in the weak* topology that
$$\limsup_{n\to\infty}\frac{1}{n}\sum_{j=0}^{n-1}\varphi\circ f^j(x)=\limsup_{n\to\infty}\int\varphi\;d\bigg(\frac{1}{n}\sum_{j=0}^{n-1}\delta_{f^j(x)}\bigg)=\sup\bigg\{\int\varphi d\mu\,;\,\mu\in\Agemo_f(x)\bigg\}.$$
Moreover, by the compactness of $\cm^1(f)$, we have that 
$$
  \sup\bigg\{\int\varphi d\mu\,;\,\mu\in\cm^1(f)\bigg\}=\max\bigg\{\int\varphi d\mu\,;\,\mu\in\cm^1(f)\bigg\}.
$$
Hence, if $\varphi\in C^0(\XX,\RR)$ and $x$ is a generic point in $\XX$, we can use \eqref{eqlkjd567} to conclude that 
$$\limsup\frac{1}{n}\sum_{j=0}^{n-1}\varphi\circ f^j(x)=\sup\bigg\{\int\varphi d\mu\,;\,\mu\in\Agemo_f(x)\bigg\}=$$
$$=\sup\bigg\{\int\varphi d\mu\,;\,\mu\in\cm^1(f)\bigg\}=\max\bigg\{\int\varphi d\mu\,;\,\mu\in\cm^1(f)\bigg\},$$
proving item {\em (\ref{itemdrt})}.

To prove item {\em (\ref{itemvgjmu})}, recall that, if $\mu_n$ is a sequence of probability measures converging to $\mu$ in the weak* topology, then $\mu(\overline{U})\ge\lim_n\mu_n(\overline{U})\ge \lim_n\mu_n(U)\ge\mu(U)$ for every open set $U\subset\XX$.
Since this implies that
$$
  \max\{\mu(\overline{U})\,;\,\mu\in\Agemo_f(x)\}\ge\tau_x(\overline{U})\ge\tau_x(U)\ge\max\{\mu(U)\,;\,\mu\in\Agemo_f(x)\}$$
for every $x\in\XX$ and every open set $U\subset\XX$,
it follows from item {\em (\ref{itemdrt})} that
\begin{equation}\label{eqrhvtjnh}
  \max\{\mu(\overline{U})\,;\,\mu\in\cm^1(f)\}\ge\tau_x(U)\ge\max\{\mu(U)\,;\,\mu\in\cm^1(f)\}
\end{equation}
for a residual set of points $x\in\XX$ and every open set $U\subset\XX$.

Suppose that $f$ is non-singular. Given a Borel set $V$, let $U$ be an open set such that $V\sim U$.
By item {\em (\ref{itemvgjmu})} there exists a residual set $R\subset\XX$ such that \eqref{eqrhvtjnh} holds for every $x\in R$.
Noting that $M:=U\triangle V$ is a meager set, $f$ is non-singular  and $\tau_x(M)>0$ $\implies$ $x\in\bigcap_{n\ge0}f^{-n}(M)$, we get that $\tau_x(U)=\tau_x(V)$ for every $x\in R':=R\setminus\bigcap_{n\ge0}f^{-n}(M)\sim R$, where $R'$ is residual in $\XX$.
And this concludes the proof of item {\em (\ref{itemvgg668mu})}.
\end{proof}

An interesting example of a strongly transitive map is the ``Furstenberg's Example''.

\begin{Corollary}\label{CorollaryFurstenberg}
	If $f$ is the Furstenberg minimal analytic diffeomorphism of the torus $\TT^2$ having the Lebesgue as a non-ergodic invariant measure \cite{Fu} then, generically, the points of $\TT^2$ have historic behavior.
Furthermore, if $\varphi:\TT^2\to\RR$ is a continuous function then 
$$
  \limsup_{n\to+\infty}\frac{1}{n}\sum_{j=0}^{n-1}\varphi\circ f^j(x)=\max\bigg\{\int\varphi d\mu\,;\,\mu\in\cm^1(f)\bigg\}$$
for a residual set of points $x\in\TT^2$.
\end{Corollary}
\begin{proof}
	As $f^{-1}$ is also a minimal homeomorphism, we get that $\alpha_f(x)=\omega_{f^{-1}}(x)=\TT^2$ for every $x\in\TT^2$, proving that $f$ is a strongly transitive map.
As $f$ is not uniquely ergodic \cite{Fu}, it follows from Theorem~\ref{TheoremUEorHB} that a generic point $x\in\TT^2$ has historic behavior.
\end{proof}

\begin{proof}[\bf Proof of Theorem~\ref{TheoremUEorHBinject}]
The proof follows the same argument of proof of Theorem~\ref{TheoremUEorHB}.
\end{proof}

Knowing that periodic points are a dense subset of any transitive hyperbolic diffeomorphism, Corollary~\ref{CorollaryAxA} below follow straightforward from Theorem~\ref{TheoremUEorHBinject}. 

\begin{Corollary}\label{CorollaryAxA}
If $f:M\circlearrowleft$ is a transitive $C^1$ Anosov diffeomorphism then a generic point $x\in M$ has historic behavior, $\omega_f^{\star}(x)=\omega_f(x)=M$ and
$$
  \limsup_{n\to+\infty}\frac{1}{n}\sum_{j=0}^{n-1}\varphi\circ f^j(x)=\max\left\{\int\varphi d\mu\,;\,\mu\in\cm^1(f)\right\},$$ whenever $\varphi:M\to\RR$ is a continuous function.
\end{Corollary}

\subsection{Topologically growing maps}

Let $\XX$ be a compact metric space and $\XX_0$ an open and dense subset of $\XX$.
A non-singular continuous map $f:\XX_0\to\XX$ is called {\bf\em $\delta$-growing}, $\delta>0$, if for each nonempty open set $V\subset\XX$ there is $n\ge0$, $q\in\XX$ and a connected component $U\subset V$ of $f^{-n}(B_{\delta}(q))$ such that $f^n(U)=B_{\delta}(q)$.
A {\bf\em topologically growing map} is a $\delta$-growing map for some $\delta>0$.

An open set $V_{n,\delta}(p)$ is called a  {\bf\em pre-ball of order $n\in\NN$,  radius  $\delta>0$ for $p\in\XX$} if there is $q\in\XX$ such that
\begin{enumerate}
	\item $V_{n,\delta}(p)$ is the connected component of $f^{-n}(B_{\delta}(q))$ containing $p$ and
\item $f^n(p)\in B_{\delta/2}(q) \subset f^n(V_{n,\delta}(p))= B_{\delta}(q)$.
\end{enumerate}

We say that $n\in\NN$ is a {\bf\em $\delta$-growing time to $p\in\XX$} when there exists a pre-ball $V_{n,\delta}(p)$ for $p$. 
Let us denote by $\cg(\delta,p)\subset\NN$ the {\bf\em set of all $\delta$-growing time to $p$}.

If $n\ge2$ is $\delta$-growing time to $p$ then $n-1$ is a $\delta$-growing time to $f(p)$. 	
Indeed, if $V_{n,\delta}(p)$ is a pre-ball of order $n$ and radius $\delta$ for $p$, with $f^n(V_{n,\delta}(p))=B_{\delta}(q)$, then $V_{n-1,\delta}(f(p)):=f(V_{n,\delta}(p))$ is a pre-ball of order $n-1$ and radius $\delta$ for $f(p)$ with $f^{n-1}(V_{n-1,\delta}(f(p)))=B_{\delta}(q)$.
That is, $\cg(\delta,f(p))\supset\cg(\delta,p)-1:=\{n-1\,;\,n\in\cg(\delta,p)\}$ for every $p\in\XX$. 
As, for $r>0$, $$\mathfrak{G}_{r}(n,\delta):=\{p\in\XX\,;\,n\in\cg(\delta,p)\text{ with }\diam(V_{n,\delta}(p))<r\}$$ is an open set, if $f$ is a $\delta$-growing map, then  $$\mathfrak{G}(\delta):=\bigcap_{\ell\in\NN}\bigcap_{n\in\NN}\bigcup_{n\le m\in\NN}\mathfrak{G}_{1/\ell}(m,\delta),$$ the set of all {\bf\em points with infinity many $\delta$-growing times for arbitrarily small pre-balls}, is a residual set.

Given $x\in\mathfrak{G}(\delta)$, the {\bf\em omega-limit in $\delta$-growing time for $x$}, denoted by $\omega_{\delta,f}(x)$, is the set of all $y\in\XX$ such that $y=\lim_j f^{n_j}(x)$ with $n_j\in\cg(\delta,x)$ and $\diam(V_{n_j,\delta}(x))\to0$.
It is easy to see that $\omega_{\delta,f}(x)$ is compact, but not necessarily forward invariant, and that $\omega_{\delta,f}(x)=\omega_{\delta,f}(f(x))$ for every $x\in\mathfrak{G}(\delta)$.

Let us assume that $f$ is a $\delta$-growing map.
Hence, it  follows from Theorem~\ref{TheoremFatErgodicAttractors} that $\XX$ can be decomposed into a finite collection of Baire ergodic components $U_1,\cdots,U_{\ell}$, with $U_j$ being open sets.
Let $A_j$ be the topological attractor for $U_j$.
Also by Theorem~\ref{TheoremFatErgodicAttractors}, $A_j$ contains a ball $B_j$ of radius $\delta$.
In particular, $\interior(A_j)\ne\emptyset$ and, as $A_j$ is transitive, we get that $A_j=\overline{\interior(A_j)}$.

\begin{Lemma}\label{LemmaLAMBDAgrowing}
$\exists\Lambda_j\subset B_{\delta/2}(\Lambda_j)\subset A_j$ s.t. $\omega_{\delta,f}(x)=\Lambda_j$ residually on $U_j$.
\end{Lemma}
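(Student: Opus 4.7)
The plan is to view $x\mapsto\omega_{\delta,f}(x)$ as a Baire-measurable, almost $f$-invariant potential on $U_j$ valued in the compact metric space $(\KK(\XX),d_H)$, apply Baire ergodicity of $f|_{U_j'}$ through Proposition~\ref{Propositionkjuytf9tfg76}(2) to conclude that it is almost constant (with value $\Lambda_j$), and then upgrade the easy containment $\Lambda_j\subset A_j$ to the thickened version $B_{\delta/2}(\Lambda_j)\subset A_j$ by invoking item~(4) of Theorem~\ref{TheoremFatErgodicAttractors}.

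First, I would verify Baire-measurability of $\omega_{\delta,f}\colon\mathfrak{G}(\delta)\to\KK(\XX)$ by the exact scheme used in the proof of Proposition~\ref{Propositiontop-ergodicAttractors}: each $\mathfrak{G}_{1/k}(m,\delta)$ is open and each $f^m$ continuous, so the finite set-valued selectors $x\mapsto\overline{\{f^m(x)\,:\,n\le m\le N,\ x\in\mathfrak{G}_{1/k}(m,\delta)\}}$ are Baire-measurable, and $\omega_{\delta,f}$ is obtained from these as an iterated limit-intersection (first in $N$, then in $n$, then in $k$). Because $\omega_{\delta,f}\circ f=\omega_{\delta,f}$ on the residual set $\mathfrak{G}(\delta)$, which is almost-invariant thanks to continuity of $f$ on $\XX_0$, the restriction to $U_j'$ is an almost-invariant Baire potential, and Proposition~\ref{Propositionkjuytf9tfg76}(2) produces a unique $\Lambda_j\in\KK(\XX)$ with $\omega_{\delta,f}(x)=\Lambda_j$ generically on $U_j$. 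Since every approximating sequence $n_k$ must satisfy $n_k\to\infty$ (otherwise $V_{n_k,\delta}(x)$ would contain $x$ with diameter bounded away from $0$), we have $\omega_{\delta,f}(x)\subset\omega_f(x)$, and combined with Theorem~\ref{TheoremFatErgodicAttractors}(3) this already gives $\Lambda_j\subset A_j$.

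For the thickening I would fix $y\in\Lambda_j$ and a generic $x\in U_j$ with $\omega_{\delta,f}(x)=\Lambda_j$; by Theorem~\ref{TheoremFatErgodicAttractors}(4) we automatically have $\Omega_f(x)=A_j$. Pick $n_k\nearrow\infty$ in $\cg(\delta,x)$ with $\diam V_{n_k,\delta}(x)\to 0$ and $f^{n_k}(x)\to y$, and let $q_{n_k}$ denote the associated centres, so that $f^{n_k}(V_{n_k,\delta}(x))=B_\delta(q_{n_k})$ and $d(q_{n_k},f^{n_k}(x))<\delta/2$; passing to a subsequence, $q_{n_k}\to q$ with $d(q,y)\le\delta/2$, whence $B_{\delta/2}(y)\subset B_\delta(q)$. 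The identity $\Omega_f(x)=A_j$ together with the compactness of $\XX$ yields, for each $\varepsilon>0$, a pair $(r_0,n_0)$ with $\overline{\bigcup_{m\ge n_0}\bar{f}^m(B_{r_0}(x))}\subset B_{\varepsilon}(A_j)$. For $k$ large enough, $V_{n_k,\delta}(x)\subset B_{r_0}(x)$ and $n_k\ge n_0$, hence $B_\delta(q_{n_k})\subset B_\varepsilon(A_j)$. Any $z\in B_\delta(q)$ then lies in $B_\delta(q_{n_k})$ for all large $k$, so $z\in B_\varepsilon(A_j)$ for every $\varepsilon>0$, i.e.\ $z\in A_j$. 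Therefore $B_{\delta/2}(y)\subset B_\delta(q)\subset A_j$, completing the proof.

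The main obstacle is precisely this last step. A naive approach based on $\omega_f(x')\subset A_j$ for generic $x'\in V_{n_k,\delta}(x)$ fails because it provides no pointwise control on the image $f^{n_k}(x')$ at the particular time $n_k$, only on tail behaviour. Replacing the pointwise $\omega$-limit by the Ilyashenko-style uniform large omega-limit $\Omega_f$ supplied by Theorem~\ref{TheoremFatErgodicAttractors}(4) is exactly what is needed to trap the whole ball $B_\delta(q_{n_k})$ inside an arbitrarily small neighbourhood of $A_j$.
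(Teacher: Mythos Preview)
Your proof is correct, and the first two stages (measurability of $x\mapsto\omega_{\delta,f}(x)$ as an iterated limit of continuous set-valued maps, followed by the application of Proposition~\ref{Propositionkjuytf9tfg76} on the Baire-ergodic component $U_j$) coincide with the paper's argument essentially line for line.

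The only genuine difference is in the thickening step $B_{\delta/2}(\Lambda_j)\subset A_j$. You route this through item~(4) of Theorem~\ref{TheoremFatErgodicAttractors}, using $\Omega_f(x)=A_j$ for a generic $x\in U_j$ to trap the whole image ball $B_\delta(q_{n_k})$ inside $B_\varepsilon(A_j)$. The paper instead exploits item~(1): since $A_j=\overline{\interior(A_j)}$, one can pick the generic point $x$ directly inside $\interior(A_j)$; then for large $k$ the shrinking pre-ball $V_{n_k,\delta}(x)$ is contained in $A_j$, and forward invariance gives $B_\delta(q_{n_k})=f^{n_k}(V_{n_k,\delta}(x))\subset A_j$ immediately, with no $\varepsilon$-limit needed. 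Your ``naive approach'' paragraph dismisses a version based on $\omega_f(x')\subset A_j$ for generic $x'$, which indeed fails for the reason you give, but this is not the paper's idea: the paper does not look at limit sets of nearby points at all, it simply uses that $A_j$ is a forward-invariant fat compact set. So your detour through $\Omega_f$ works, but is heavier than necessary; the elementary route via forward invariance of $A_j$ is what the paper does and is worth noticing.
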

\begin{proof}
Let $\cg(r,\delta,x)$ be the set of all $\delta$-growing times to $x$ such that $\diam(V_{n,\delta}(x))<r$, $\mathfrak{G}_r(\delta)=\bigcap_{n\in\NN}\bigcup_{n\le m\in\NN}\mathfrak{G}_{r}(m,\delta)$ and 
 $\psi_{n,r}:\mathfrak{G}_r(\delta) \to\KK(\XX)$ be given by $\psi_{n,m,r}(x)=\{f^j(x)\,;\,j\in\{n,\cdots,n+m\}\cap\cg(r,\delta,x)\}$. 
As $\psi_{n,m,r}$ is continuous, the map $\psi_{n,r}:\mathfrak{G}_r(\delta) \to\KK(\XX)$ given by $\psi_{n,r}(x)=\lim_{m}\psi_{n,m,r}(x)=\overline{\{f^j(x)\,;\,j\ge n\,;\,j\in\cg(r,\delta,x)\}}$ is a measurable map, as it is the pointwise limit of continuous (and so, measurable) maps.
Similarly, $\psi_r$ is also measurable, where $\psi_r(x)=\lim_n\psi_{n,r}(x)=\bigcap_n\overline{\{f^j(x)\,;\,j\ge n\,;\,j\in\cg(r,\delta,x)\}}$.
Finally, as $\omega_{\delta,f}(x)=\lim_{\NN\ni\ell\to\infty}\psi_{1/\ell}(x)$, we get that $\mathfrak{G}(\delta)\ni x\mapsto\omega_{\delta,f}(x)\in\KK(\XX)$ is a measurable map and so, an invariant Baire potential.
Thus, it follows from Corollary~\ref{Corollaryjuytf9tfg76} that there exists $\Lambda_j\in\KK(\XX)$ such that $\omega_{\delta,f}(x)=\Lambda_j$ for a residual set of points $x\in U_j$.
As $A_j=\overline{\interior(A_j)}$, $\omega_{\delta,f}(x)=\Lambda_j$ for a residual set of points $x\in\interior(A_j)$.
By the definition of a $\delta$-growing time to $x$, if $y\in\omega_{\delta,f}(x)$ and $x\in\interior(A_j)$ we get that $B_{\delta/2}(y)\subset A_j$ and, as a consequence, $B_{\delta/2}(\Lambda_j)\subset A_j$.
\end{proof}

Recall that a Borel probability measure $\mu$ is called $\varphi$-maximizing measure with respect to $f$ if it is $f$-invariant and $$\int\varphi d\mu=\sup\bigg\{\int\varphi d\nu\,;\,\nu\in\cm^1(f)\bigg\}.$$
\begin{Theorem}
\label{TheoremFatErgodicAttractors2}
Let $\XX$ be a compact metric space and $\XX_0$ an open and dense subset of $\XX$.
If $f:\XX_0\to\XX$ is a $\delta$-growing map then $\XX$ can be decomposed (up to a meager set) into a finite number of Baire ergodic components $U_1,\cdots, U_\ell\subset\XX$, each $U_j$ is an open set and  the attractors $A_j$ associated to  $U_j$ satisfy the following properties for each $1\le j\le \ell$.
\begin{enumerate}[(T1)]
\item Each $A_j$ is transitive,  contains an open ball of radius $\delta$ and $A_j=\overline{\interior(A_j)}$.
\item $\overline{\Omega(f)\setminus\bigcup_{j=0}^{\ell}A_j}$ is a compact set with empty interior.
\item For each $A_j$ there is a forward invariant set $\ca_j\subset A_j$ containing an open and dense subset of $A_j$ such that $f$ is strongly transitive in $\ca_j$.
\item  $\omega_f^{\star}(x)=\omega_f(x)=A_j$ for a residual set of points $x\in U_j\sim\beta_f(A_j)$.
\item $h_{top}(f|_{\ca_j})>0$.
\item $f|_{\ca_j}$ has an uncountable set of ergodic invariant probability measures.
\item If $x$ is a generic point of $U_j$ and $\varphi\in C(\XX,\RR)$ then  
\begin{equation}\label{Equationkljknlnl}
    \limsup_{n\to+\infty}\frac{1}{n}\sum_{j=0}^{n-1}\varphi\circ f^j(x)\ge\sup\bigg\{\int\varphi d\mu\,;\,\mu\in\cm^1(f|_{\ca_j})\bigg\}.
\end{equation}
 In particular, if there is a $\varphi$-maximizing measure $\mu_j$ with respect to $f|_{A_j}$ such that $\mu_j(\ca_j)=1$ then (\ref{Equationkljknlnl}) becomes  an equality.
\end{enumerate}
Furthermore, 
\begin{enumerate}[(T1)]
\setcounter{enumi}{7}
\item $f$ has sensitive dependence on initial conditions.
\item Generically, the points of  $\XX$ have historic behavior.
\item If $\XX$ is a compact manifold (possibly with boundary) then $\overline{\per(f)}\supset\bigcup_{j=1}^{\ell}A_j$.
\end{enumerate}
\end{Theorem}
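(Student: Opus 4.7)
The plan is to reduce the bulk of the theorem to Theorem~\ref{TheoremFatErgodicAttractors} and then extract the remaining statistical content from a horseshoe forced by the growing property, finishing with a Brouwer argument for the periodic-point density. First, note that the $\delta$-growing condition implies the \emph{stronger} hypothesis of Theorem~\ref{TheoremFatErgodicAttractors}: for any open $V\subset\XX$, the pre-ball $U\subset V$ satisfies $\bar f^n(V)\supset f^n(U)=B_\delta(q)$. Applying that theorem immediately produces the finite Baire ergodic decomposition $U_1,\dots,U_\ell$ into open sets (with $\ell\le\diam(\XX)/(2\delta)$), the attractors $A_j=\overline{\interior(A_j)}$ each containing a ball $B_j$ of radius $\delta$, the strongly transitive forward invariant sets $\ca_j\subset A_j$ open and dense in $A_j$, and $\omega_f(x)=A_j$ residually on $U_j$. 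This already establishes items~(1), (2), and the topological half of~(3).

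For items~(4) and~(5) I would build a topological horseshoe inside each $\ca_j$: fix $B_\delta(q)\subset A_j$, take two disjoint open subballs $B_0',B_1'\subset B_\delta(q)$, apply $\delta$-growing inside each to obtain pre-balls $W_i\subset B_i'$ with $f^{n_i}(W_i)=B_\delta(q_i)$, and use transitivity of $A_j$ together with a second round of growing to close each $B_\delta(q_i)$ back into $B_\delta(q)$ by a further pre-ball. Iterating produces a $\Sigma_2^+$-semi-coding inside $\ca_j$, whence $h_{top}(f|_{\ca_j})\ge\log 2$ and an uncountable family of mutually singular ergodic probabilities. The horseshoe also yields a dense set of periodic points in $\ca_j$, and by the strong transitiveness the pre-orbit of each periodic $p$ is dense in $A_j$, so $\beta_f(\mu_p)$ is dense in $U_j$. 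Re-examining the proof of Proposition~\ref{PropositionMataLeao}, one sees that only the density of $\beta_f(\mu)$ is actually used; hence $\mu_p\in\Agemo_f(x)$ residually on $U_j$ for every such $p$, and intersecting over a countable weak-$*$ dense family of periodic measures gives $\overline{\{\mu_p\}}\subset\Agemo_f(x)$ generically. This immediately yields~(6) ($\limsup\frac{1}{n}\sum\varphi\circ f^j(x)\ge\int\varphi\,d\mu_p$ for every periodic $p$, hence $\ge\sup_{\mu\in\cm^1(f|_{\ca_j})}\int\varphi\,d\mu$ by weak-$*$ approximation; equality under a $\varphi$-maximizing $\mu$ with $\mu(\ca_j)=1$ follows from the standard reverse bound $\limsup\le\sup_{\nu\in\cm^1(f|_{A_j})}\int\varphi\,d\nu$ together with the fact that generic orbits enter and stay in $\ca_j$). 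The statistical half of~(3) follows by taking $\mu:=\sum_n c_n\mu_{p_n}$ with $\{p_n\}$ dense in $A_j$: then $\supp\mu=A_j$ and via~\eqref{Equationjhvyurd8unm} one gets $\omega_f^*(x)\supset A_j$ residually, completing $\omega_f^*(x)=A_j$. Item~(7) is then immediate, since $\#\Agemo_f(x)>1$ generically forces historic behavior.

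For the density of periodic points when $\XX$ is a compact manifold, fix $q\in\interior(A_j)$ and $0<\varepsilon<\delta$ with $B_\varepsilon(q)$ contained in a Euclidean (or half-space) coordinate chart. By growing inside $B_{\varepsilon/3}(q)$ select a pre-ball $V$ of order $n$ with $f^n(V)=B_\delta(q_1)$; by strong transitivity of $\ca_j$ choose $y\in B_\delta(q_1)\cap\ca_j$ and $k\ge 0$ with $f^k(y)\in B_{\varepsilon/3}(q)$. Pick a small $W\ni y$ with $\overline W\subset B_\delta(q_1)$ and $f^k(\overline W)\subset B_\varepsilon(q)$, and let $V'$ be the component of $(f^n|_V)^{-1}(W)$ through a chosen preimage of $y$. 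Then $\overline{V'}\subset B_\varepsilon(q)$ and $f^{n+k}(\overline{V'})\subset\overline{B_\varepsilon(q)}$. Shrinking $W$ sufficiently so that $\overline{V'}$ is homeomorphic to a closed ball in the chart, the composition of $f^{n+k}|_{\overline{V'}}$ with the nearest-point retraction onto $\overline{V'}$ becomes a continuous self-map, so Brouwer yields a fixed point; a local degree argument on $\partial V'$ (nonzero because $f^{n+k}(\overline{V'})$ covers a full ball around $q$) identifies this as a genuine fixed point of $f^{n+k}$ rather than a retraction artefact. This produces a periodic point in $B_\varepsilon(q)$, and since $q$ and $\varepsilon$ were arbitrary, $\overline{\per(f)}\supset A_j$.

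The principal obstacle is the last paragraph: because $f^n|_V$ is only continuous and surjective onto $B_\delta(q_1)$ and need not be a homeomorphism, the preimage component $\overline{V'}$ can a priori have complicated topology. Making Brouwer's theorem rigorous, and verifying that the fixed point it produces is not introduced by the retraction, requires shrinking $W$ until $\overline{V'}$ fits inside a single tubular coordinate patch where $\deg(f^{n+k}-\mathrm{id},V',0)$ can be computed. The horseshoe construction is also nontrivial—one must maintain disjointness of the pre-balls through every iteration—but this is standard once the first two generations are in place.
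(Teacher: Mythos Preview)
Your reduction to Theorem~\ref{TheoremFatErgodicAttractors} for items~(1), (2), and the topological half of~(3) is exactly what the paper does. The real gap is in your horseshoe. Applying $\delta$-growing to $B_0',B_1'$ produces pre-balls whose images are balls $B_\delta(q_0),B_\delta(q_1)$ centered at \emph{uncontrolled} points; neither transitivity (which gives only intersection, not covering) nor a ``second round of growing'' (which again lands on an uncontrolled $\delta$-ball) lets you close the loop back onto the original $B_\delta(q)$. The paper's missing ingredient is Lemma~\ref{LemmaLAMBDAgrowing}: using Baire ergodicity of $U_j$ (Proposition~\ref{Propositionkjuytf9tfg76}), the growing-time $\omega$-limit $\omega_{\delta,f}(x)$ is generically a fixed compact set $\Lambda_j$ with $B_{\delta/2}(\Lambda_j)\subset A_j$. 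Choosing $p\in B_{\delta/8}(\Lambda_j)$, generic points in $B_\varepsilon(p)$ have growing times whose $\delta$-ball images are centered near $\Lambda_j$ and therefore \emph{contain} $\overline{B_\varepsilon(p)}$. This yields Claim~\ref{ClaimLocalHorseshoes}: disjoint $\overline{U_0},\overline{U_1}\subset B_\varepsilon(p)$ with $f^{n_i}(\overline{U_i})=\overline{B_\varepsilon(p)}$, a genuine covering horseshoe. From this, items~(4)--(5) and the density of periodic points follow directly (Brouwer applied to $f^{n_1}\colon\overline{U_1}\to\overline{B_\varepsilon(p)}\supset\overline{U_1}$, with $\varepsilon$ small so that $\overline{B_\varepsilon(p)}$ is a chart ball), making your retraction/degree detour unnecessary.

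A second issue: your route to item~(6) assumes periodic measures are weak-$*$ dense in $\cm^1(f|_{\ca_j})$, which you have not established (and which is false without specification-type hypotheses). The paper bypasses this entirely. It applies Proposition~\ref{PropositionMataLeao} directly: for every ergodic $\mu\in\cm_e^1(f|_{\ca_j})$ there is $p\in\beta_f(\mu)\cap\ca_j$; since $\alpha_f(p)\supset\overline{U_j}$ by item~(\ref{Item(5b)}) of Theorem~\ref{TheoremFatErgodicAttractors}, the pre-orbit of $p$ is dense in $U_j$ and lies in $\beta_f(\mu)$, so the density-only version of Proposition~\ref{PropositionMataLeao} that you yourself identified gives $\mu\in\Agemo_f(x)$ generically. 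This immediately yields~(6), (7), and (via the local horseshoes producing ergodic measures supported arbitrarily close to any point of $B_{\delta/4}(\Lambda_j)$, spread by transitivity to all of $A_j$) the statistical half of~(3).
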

\begin{proof} The decomposition into a finite number of Baire ergodic components $U_j$, the topological attractor $A_j$, items {\em (T1), (T2), (T3)} and $\omega_f(x)=A_j$ for a residual set of points $x\in U_j$ follow directly from Theorem~\ref{TheoremFatErgodicAttractors}.

Moreover, it follows from Theorem~\ref{TheoremFatErgodicAttractors} and Proposition~\ref{PropositionMataLeao} that $\Agemo_f(x)\supset\cm^1(f|_{\ca_j})$ for a residual set of points $x\in \ca_j$, where $\ca_j=\{x\in A_j\,;\,\alpha_f(x)\supset A_j\}$ contains an open and dense subset of $A_j$.
Let $\Lambda_j\subset B_{\delta/2}(\Lambda_j)\subset A_j$ be the compact set given by Lemma~\ref{LemmaLAMBDAgrowing} such that $\omega_{\delta,f}(x)=\Lambda_j$ for a residual set of points $x\in U_j$ and consider a point $p\in B_{\delta/8}(\Lambda_j)$.
\begin{Claim}[Local horseshoes]\label{ClaimLocalHorseshoes}
Given $0<\varepsilon<\delta/4$ there exist open sets $S_0$ and $S_1$, with $\overline{S_j}\subset B_{\varepsilon}(p)$, $\overline{S_0}\cap\overline{S_1}=\emptyset$ and
integers $n_0,n_1\in\NN$ such that $\overline{S_j}$ is a connected component of $f^{-n_j}(\overline{B_{\varepsilon}(p)})$ and $f^{n_j}(\overline{S_j})=\overline{B_{\varepsilon}(p)}$.
\end{Claim}
\begin{proof}[Proof of the claim] Let $q\in B_{\delta/4}(p)\cap\Lambda_j$.
As $\mathfrak{G}(\delta)$ contains a residual set, let $p_0,p_1\in B_{\varepsilon}(p)\cap \mathfrak{G}(\delta)$ be so that $q\in \omega_{\delta,f}(p_j)$ for $j=0,1$.
Let $$r=\min\{d(p_0,p_1)/3,d(p_0,\partial B_{\varepsilon}(p))/3,d(p_1,\partial B_{\varepsilon}(p))/3\}$$
and $n_j\in\cg(r,\delta,p_j)$, $j=0,1$, so that $f^{n_j}(p_j)$ is close enough to $q$ so that $f^{n_j}(p_j)\in B_{\delta/2}(p)$.
Hence, there are pre-balls $V_{n_j,\delta}(p_j)$, $j=0,1$, with diameters smaller than $r$ such that $f^{n_j}(V_{n_j,\delta}(p_j))\supset\overline{B_{\varepsilon}(p)}$.
Let $S_j$ be the connected component of $(f^{n_j}|_{V_{n_j,\delta}(p_j)})^{-1}(B_{\varepsilon}(p))$ containing $p_j$.
Thus, $f^{n_j}(\overline{S_j})=\overline{B_{\varepsilon}(p)}$, $\overline{S_j}\subset B_r(p_j)\subset B_{\varepsilon}(p)$ and  $\overline{S_0}\cap \overline{S_1}\subset B_{r}(p_0)\cap B_r(p_1)=\emptyset$, proving the claim.
\end{proof}

Let $F_{\varepsilon}:\overline{S_0}\cup\overline{S_1}\to \overline{B_{\varepsilon}(p)}$ be the induced map given by  $F_{\varepsilon}(x)=f^{R(x)}(x)$ with $R(x)=n_j$ for $x\in \overline{S}_j$, where   $S_j$ and $n_j$ are as in Claim~\ref{ClaimLocalHorseshoes} above.
Taking $\Gamma_{\varepsilon}=\bigcap_{n\ge0}F_{\varepsilon}^{-n}(\overline{B_{\varepsilon}(p)})$,
one can use the itinerary map (i.e., $I:\Gamma_{\varepsilon}\to\Sigma_2^+$ given by $I(x)(n)=\mathbb{1}_{U_1}\circ F_{\varepsilon}^n(x)$) to obtain a semiconjugation between
$F_{\varepsilon}|_{\Gamma_{\varepsilon}}$ and the shift $\sigma:\Sigma_2^+\circlearrowleft$.
As this implies that $h_{top}(F_{\varepsilon})\ge\log2$, we get that $h_{top}(f|_{A_j})\ge\frac{1}{2}(n_0+n_1)h_{top}(F_{\varepsilon})>0$, proving item {\em (T5)}.
Likewise, it follows from the semiconjugation that $\cm_e^1(F_{\varepsilon})$ is an uncountable set and, as $\int R d\mu\le\max\{n_0,n_1\}$ for every $\mu\in\cm_e^1(F_{\varepsilon})$, we get that $\cm_{e}^1(f|_{\ca_j})$ is also an uncountable set, proving item {\em (T6)}.
The items {\em (T7)} and {\em (T9)} follow from item {\em (T3)}, {\em (T6)} and Proposition~\ref{PropositionMataLeao}.

The presence of horseshoes inside each $\ca_j$ (claim~\ref{ClaimLocalHorseshoes}) implies that there exists $x\in\ca_j$ such that $\emptyset\ne\omega_f(x)\ne A_j$ and so, by item {\em (6)} of Theorem~\ref{TheoremFatErgodicAttractors}, $f|_{A_j}$ has sensitive dependence on initial condition, $\forall1\le j\le\ell$. Since $U_j\sim\bigcup_{n\ge0}f^{-1}(A_j)$ $\forall j$ and $\XX\sim U_1\cup\cdots\cup U_{\ell}$, we get that $f$ has sensitive dependence on initial condition, showing  item {\em (T8)}.

Note that Claim~\ref{ClaimLocalHorseshoes} implies that for every $p\in B_{\delta/4}(\Lambda_j)$ and every $0<\varepsilon<\delta/2$ there is a $f$ invariant ergodic probability measure $\mu$ such that $\supp\mu\cap B_{\varepsilon}(p)\ne\emptyset$.
That is, $\overline{\bigcup_{\mu\in\cm_e^1(f|_{\ca_j})}\supp\mu}\supset B_{\delta/4}(\Lambda_j)$ and, by transitivity and compactness, this implies that $$A_j\supset\overline{\bigcup_{\mu\in\cm_e^1(f|_{\ca_j})}\supp\mu}\supset\overline{\ca_j}=A_j.$$
Hence, as $\omega_f^{\star}(x)=\overline{\bigcup_{\mu\in\Agemo_f(x)}\supp\mu}\supset \overline{\bigcup_{\mu\in\cm_e^1(f|_{\ca_j})}\supp\mu}$ for a residual set of points $x\in U_j$ (Corollary~\ref{CorLemmaEquationjhvyurd8unm}), we get that $\omega_f^{\star}(x)=A_j$ for a residual set of points $x\in U_j$, completing the proof of item {\em (T4)}.

Finally, if $\XX$ is a compact manifold (possibly with boundary), we can use Brouwer fixed-point theorem to prove that $F_{\varepsilon}$ has a fixed point on $\overline{S_1}$ (and also in $\overline{S_2}$). Thus $\per(f)\cap B_{\varepsilon}(p)$ for every $\varepsilon>0$ and every $p\in B_{\delta/4}(\Lambda_j)$.
Hence, using that $f|_{A_j}$ is transitive, we get that $\overline{\per(f)}\supset A_j$, proving item {\em (T10)} and completing the proof of the theorem.
\end{proof}

\begin{proof}[\bf Proof of Theorem~\ref{TheoremFatErgodicAttractors2MAIN}]
With the exception of item {\em (6)}, all items of Theorem~\ref{TheoremFatErgodicAttractors2MAIN} follows directly from Theorem~\ref{TheoremFatErgodicAttractors2}.
To check item {\em (6)}, observe that  each Baire ergodic component $U_j$ is almost equal to the base of attraction $\beta_f(A_j)$, i.e., $U_j\sim\beta_f(A_j)$. 
If $\varphi\in C(\XX,\RR)$, it follows from the Baire ergodicity that exists $a_+\in\RR$ such that 
$$\limsup_{n\to+\infty}\frac{1}{n}\sum_{j=0}^{n-1}\varphi\circ f^j(x)=a_+$$
for a residual set of points $x\in U_j$.
By the same reasoning, there exists $b_+\in\RR$ such that 
$$-\liminf_{n\to+\infty}\frac{1}{n}\sum_{j=0}^{n-1}\varphi\circ f^j(x)=\limsup_{n\to+\infty}\frac{1}{n}\sum_{j=0}^{n-1}-\varphi\circ f^j(x)=b_+$$
for a residual set of points $x\in U_j$.
By item {\em (T7)} of Theorem~\ref{TheoremFatErgodicAttractors2}, if $x$ is a generic point of $\beta_f(A_j)$ then, 
\begin{equation}
   a_+=\limsup_{n\to+\infty}\frac{1}{n}\sum_{j=0}^{n-1}\varphi\circ f^j(x)\ge\sup\bigg\{\int\varphi d\mu\,;\,\mu\in\cm^1(f|_{\ca_j})\bigg\}
\end{equation}
and
$$
    b_+=\limsup_{n\to+\infty}\frac{1}{n}\sum_{j=0}^{n-1}-\varphi\circ f^j(x)\ge\sup\bigg\{\int-\varphi d\mu\,;\,\mu\in\cm^1(f|_{\ca_j})\bigg\}.
$$
Writing $a_-=-b_+$, we get that  
$$\liminf_{n\to+\infty}\frac{1}{n}\sum_{j=0}^{n-1}\varphi\circ f^j(x)=a_-=-\limsup_{n\to+\infty}\frac{1}{n}\sum_{j=0}^{n-1}-\varphi\circ f^j(x)\le$$
$$\le -\sup\bigg\{\int-\varphi d\mu\,;\,\mu\in\cm^1(f|_{\ca_j})\bigg\}=\inf\bigg\{\int\varphi d\mu\,;\,\mu\in\cm^1(f|_{\ca_j})\bigg\}$$
for a residual set of points $x\in\beta_f(A_j)$.
\end{proof}

\subsection{Regular attractors and physical measures}\label{Connections}

In this section let $f:\XX\circlearrowleft$ be a continuous map defined on a compact metric space $\XX$.
Let $m$ be a reference Borel measure with full support, i.e., $\supp m=\XX$ $($\footnote{ Typically, one can assume that $m$ is the Lebesgue measure when $\XX$ is a Riemannian manifold.}$)$.
Recall that, given a compact $A$, the {\bf\em basin of attraction of $A$} is defined as
$$\beta_f(A)=\{x\in\XX\,;\,\omega_f(x)\subset A\}.$$

In the same way as the definition of topological attractors (see Section~\ref{SecStatOfMainsR}) or topological statistical attractors (Definition~\ref{DefTopStsAtt}), we define {\bf\em metrical attractor} (with respect to the reference measure $m$) as a compact set $A\subset\XX$ such that $m(\beta_f(A))>0$ and $m(\beta_f(A)\setminus\beta_f(A'))>0$ for every compact set $A'\subset A$.
\begin{Definition}[Regular attractors]
A metrical attractor $A$ is called {\bf\em regular} when  $\omega_f(x)=A$ for $m$ almost every $x\in\beta_f(A)$.
Likewise, a topological attractor $A$ is {\bf\em regular} when $\omega_f(x)=A$ for a residual set of points $x\in\beta_f(A)$.
\end{Definition}

Note that if $m$ is the Lebesgue measure then all regular metrical attractor is a metrical attractor in Milnor sense \cite{Mi}.
Furthermore, most of the metrical attractors in the literature are regular attractors:
\begin{enumerate}
\item the attractors of $C^2$ non-flat interval maps (including the wild attractors),
\item hyperbolic attractors for $C^2$ diffeomorphisms,
\item non uniformly expanding attractors for $C^{1+}$ maps with non degenerated critical region (including Viana's maps \cite{Vi}),
\item non-uniformly hyperbolic attractors for $C^{1+}$ maps,
\item Lorenz, Henon and Kan attractors \cite{Lo,He,Ka}.
\end{enumerate}
In particular, most the attractors supporting an SRB or, more in general, a physical measure are regular metrical attractors.
As mentioned in Section~\ref{SecStatOfMainsR}, the {\bf\em basin of attraction} of a measure $\mu\in\cm^1(\XX)$, denoted by $\beta_f(\mu)$, is the set of all $x\in\XX$ such that $\frac{1}{n}\sum_{j=0}^{n-1}\delta_{f^j(x)}$ converges to $\mu$ in the weak$^{\star}$ topology.

\begin{Definition}[Physical measures]
A probability measure $\mu\in\cm^1(\XX)$ is called a {\bf\em physical measure}, with respect to the reference measure $m$, if $m(\beta_f(\mu))>0$.
\end{Definition}

Consider the partial order $\le$ on $\KK(\XX)$ given by the inclusion, i.e., $A\le B$ when $A\subset B$.
A map $\varphi:\XX\to\KK(\XX)$ is {\bf\em upper semicontinuous} at a point $p\in\XX$ if $\limsup_{n\to \infty}\varphi(x_n)\le\varphi(p)$ for every sequence $x_n\to p$.
Similarly, $\varphi$ is {\bf\em lower semicontinuous} at $p\in\XX$ if $\liminf_{n\to\infty}\varphi(x_n)\ge\varphi(p)$ for every sequence $x_n\to p$.
It is easy to check that $\varphi$ is upper semicontinuous at $p$ if and only if for every  $\varepsilon>0$ there exist $\delta>0$ such that $\varphi(x)\subset B_{\varepsilon}(\varphi(p)):=\bigcup_{x\in \varphi(p)}B_{\varepsilon}(x)\subset\XX$ $\forall x\in B_{\delta}(p)$, where $B_r(p)$ denotes the open ball on $\XX$ (not on $\KK(\XX)$) of radius $r>0$ and center $p\in\XX$.
As the same, $\varphi$ is lower semicontinuous at $p$ if and only if for every  $\varepsilon>0$ there exist $\delta>0$ such that $\varphi(p)\subset B_{\varepsilon}(\varphi(x))$ for every $x\in B_{\delta}(p)$.

Consider the maps $\omega_f:\XX\to\KK(\XX)$, $\omega_f^{\star}:\XX\to\KK(\XX)$ and $\Agemo_f:\XX\to\KK(\cm^1(\XX))$, where $\omega_f(x)$ is the omega-limit  of $x$ (see the beginning of Section~\ref{SectionAttractors}), $\omega_f^{\star}(x)$ is the statistical omega-limit of $x$ (see the beginning of Section~\ref{SectionSTATAT}) and $\Agemo_f(x)$ is the statistical spectrum of $x$ (see Section~\ref{Subsubsecststspec}).
To analyze the points of $\XX$ where $\omega_f,\omega_f^{\star}$ and $\Agemo_f$ are semicontinuous, we need Fort's Theorem below.

\begin{theorem}[M. K. Fort, \cite{Fo}]
For any Baire topological space $X$ and compact topological space $Y$, the set of continuity points of a semicontinuous map from $X$ to $\KK(Y)$ is a Baire generic subset of $X$.
\end{theorem}

Let $d_{H}$ the Hausdorff distance on $\KK(\XX)$ with respect to the distance $d$ on $\XX$.
Let $\dd$ be a distance on $\cm^1(\XX)$ compatible with the weak* topology.
For instance, we may consider the distance given by \eqref{eqdistprob} at Section~\ref{SectionSTATAT}.
Defining $\overline{\dd}(\mu,\nu)=\dd(\mu,\nu)+d_{H}(\supp\mu,\supp\nu)$, we have that $\overline{\dd}$ is a distance on $\cm^1(\XX)$.
Let $\overline{\dd}_H$ be the Hausdorff distance on $\KK(\cm^1(\XX))$ associated to $\overline{\dd}$.

\begin{Proposition}\label{Propkjgfd671}
There exists a residual set $\cR\subset\XX$ such that $\omega_f,\omega_f^{\star}:(\XX,d)\to(\KK(\XX),d_H)$ and $\Agemo_f:(\XX,d)\to(\KK(\cm^1(\XX),\overline{\dd}_H)$ are upper semicontinuous maps at every point of $\cR$ $($\footnote{ As  the induced topology generated by $\overline{\dd}$ is stronger than the weak* topology (induced by $\dd$), the map $\Agemo_f:(\XX,d)\to(\KK^1(\cm^1(\XX)),\dd_H)$ is also upper semicontinuous at all points of $\cR$, where $\dd_H$ is the Hausdorff distance on $\KK(\cm^1(\XX))$ associated to $\dd$ and used at Section~\ref{SectionSTATAT}.}$)$.
\end{Proposition}
\begin{proof}
Let us consider the maps $\varphi:(\XX,d)\to(\KK(\XX),d_H)$ and $\psi:(\XX,d)\to(\KK(\cm^1(\XX)),\overline{\dd}_H)$ given by $\varphi(x)=\overline{\co_f^+(x)}=\overline{\{f^{n-1}(x)\,;\,n\in\NN\}}$ and $\psi(x)=\overline{\{\frac{1}{n}\sum_{j=0}^{n-1}\delta_{f^j(x)}\,;\,n\in\NN\}}$.
\begin{Claim}\label{Claimljbhiyrd76}
$\varphi$ and $\psi$ are lower semicontinuous maps. 
\end{Claim}
\begin{proof}[Proof of the claim]
Let $p\in\XX$. Since $B_{\varepsilon}(\co_f^+(p))\supset\overline{\co_f^+(p)}$ for every $\varepsilon>0$, it follows from the compactness of $\overline{\co_f^+(p)}$ that there is $\ell_{\varepsilon}\in\NN$ such that $\bigcup_{j=0}^{\ell_{\varepsilon}}B_{\varepsilon/2}(f^j(p))\supset\overline{\co_f^+(p)}$.
On the other hand, as $\XX\ni x\mapsto \{x,\cdots,f^{\ell_{\varepsilon}}(x)\}\in\KK(\XX)$ is a continuous map, one can see that there exists $\delta>0$ such that $B_{\varepsilon}(f^j(x))\supset B_{\varepsilon/2}(f^j(p))$ for every $0\le j\le\ell_{\varepsilon}$ and $x\in B_{\delta}(p)$.
Thus, $B_{\varepsilon}(\varphi(x))=B_{\varepsilon}(\overline{\co_f^+(x)})\supset \bigcup_{j=0}^{\ell_{\varepsilon}}B_{\varepsilon}(f^j(x))\supset\bigcup_{j=0}^{\ell_{\varepsilon}}B_{\varepsilon}(f^j(p))\supset\overline{\co_f^+(p)}=\varphi(p)$ for every $x\in B_{\delta}(p)$, proving the lower semi continuity of $\varphi$.
A similar argument show the lower semicontinuity of $\psi$.
Indeed, write $\mu_n(x)=\frac{1}{n}\sum_{j=0}^{n-1}\delta_{f^j(x)}$ and let, for given $\varepsilon>0$, $\ell_{\varepsilon}\in\NN$ be such that $\bigcup_{j=0}^{\ell_{\varepsilon}}B_{\varepsilon/2}(\mu_n(p))\supset\overline{\{\mu_n(p)\,;\,n\in\NN\}}$.
Taking $\delta>0$ small enough, it follows from the continuity of $(\XX,d)\ni x\mapsto \{\mu_1(x),\cdots,\mu_n(x)\}\in(\KK(\cm^1(\XX)),\overline{\dd}_H)$ that $\bigcup_{j=0}^{\ell_{\varepsilon}}B_{\varepsilon}(\mu_j(x))\supset \bigcup_{j=0}^{\ell_{\varepsilon}}B_{\varepsilon/2}(\mu_j(p))$ $\forall x\in B_{\delta}(p)$.
As for $\varphi$, this implies that $B_{\varepsilon}(\psi(x))\supset \psi(p)$ proving the lower semi continuity of $\psi$.
\end{proof}

It follows from Claim~\ref{Claimljbhiyrd76} and Fort's theorem above that there exists  residual set $\cR_{\varphi}$ and $\cR_{\psi}\subset\XX$ such that $\varphi$ is continuous at every point of $\cR_{\varphi}$, as well as, $\psi$ is continuous at the points of $\cR_{\psi}$.

Let $p\in\cR:=\cR_{\varphi}\cap\cR_{\psi}$.
Given $\varepsilon>0$ let $U_{\varepsilon}=\co_f^+(p)\setminus B_{\varepsilon}(\omega_f(p))$, recalling that $B_{\varepsilon}(\omega_f(x))=\bigcup_{x\in\omega_f(p)}B_{\varepsilon}(p)\subset\XX$.
Note that $U_{\varepsilon}$ is a finite set and choose an open set $V\subset\XX$ containing  $U_{\varepsilon}$ and such that $V\cap B_{\varepsilon}(\omega_f(p))=\emptyset$. 
As $V\cup B_{\varepsilon}(\omega_f(p))$ contains $\overline{\co_f^+(p)}$ and $\lim_{x\to p}\overline{\co_f^+(x)}=\overline{\co_f^+(p)}$, there exists $\delta>0$ such that $\overline{\co_f^+(x)}\subset V\cup B_{\varepsilon}(\omega_f(p))$ and  $\overline{\co_f^+(x)}\cap V$ is a finite set for every $x\in B_{\delta}(p)$.
This implies that $\omega_f(x)\subset B_{\varepsilon}(\omega_f(p))$ for every $x\in B_{\delta}(p)$, proving that $\omega_f$ is upper semicontinuous at $p$.
A similar argument shows that $\Agemo_f$ is upper semicontinuous at $p$.

Finally, the upper semicontinuity of $\omega_f^{\star}$ at a point $p\in\cR$ follows from the  upper semicontinuity  of $\Agemo_f(x)$.
Indeed, by the upper semicontinuity, given $\varepsilon>0$, there exists $\delta>0$ such that $B_{\varepsilon}(\Agemo_f(p))\supset \Agemo_f(x)$ for every $x\in B_{\delta}(p)$.
Hence, if $x\in B_{\delta}(p)$ and  $\mu\in\Agemo_f(x)$, there exists $\nu\in\Agemo_f(p)$ such that $\overline{\dd}(\mu,\nu)<\varepsilon/2$.
That is, $\dd(\mu,\nu)+d_H(\supp\mu,\supp\nu)<\varepsilon/2$.
Thus, by Lemma~\ref{LemmaEquationjhvyurd8unm} (applied to $X=\XX$ and $g=f$), $\supp\mu\subset B_{\varepsilon/2}(\supp\nu)\subset B_{\varepsilon/2}(\overline{\bigcup_{\eta\in\Agemo_f(p)}\supp\eta})=B_{\varepsilon/2}(\omega_f^{\star}(p))$ for every $\mu\in\Agemo_f(x)$ and $x\in B_{\delta}(p)$.
As a consequence,  $\omega_f^{\star}(x)=\overline{\bigcup_{\mu\in\Agemo_f(x)}\supp\mu}\subset\overline{B_{\varepsilon/2}(\omega_f^{\star}(p))}\subset B_{\varepsilon}(\omega_f^{\star}(p))$ for every $x\in B_{\delta}(p)$, proving the upper semicontinuity of $\omega_f^{\star}$ at every $p\in\cR$.
\end{proof}

\begin{Theorem}\label{TheoremErgMetSRB}
Suppose that $f$ is non-singular and $U$ is a $u$-Baire ergodic component of $f$.
Let $\Lambda$ and $\Lambda^{\star}$ be, respectively, the topological attractor and the topological statistical attractor of $U$.
\begin{enumerate}
\item If $A$ is a metrical attractor and $\overline{\{x\in\XX\,;\,\omega_f(x)=A\}}\cap U$ is a fat set then $A\subset\Lambda$.
\item If $\mu\in\cm^1(\XX)$ is a  physical measure and $\overline{\beta_f(\mu)}\cap U$ is a fat set then $\supp\mu\subset\Lambda^{\star}\subset\Lambda$.
\end{enumerate}
\end{Theorem}
\begin{proof}
Let $\cR$ be the residual set given by Proposition~\ref{Propkjgfd671}.
Let $A$ be a metrical attractor such that $\overline{\beta_f^+(A)}\cap U$ is a fat set, where $\beta_f^+(A)=\{x\in\XX\,;\,\omega_f(x)=A\}$.
In this case, there exists an nonempty open set $V$ such that $\beta_f^+(A)$ and $U$ are respectively dense and residual in $V$.
As $\beta_f^+(A)$ is dense in $V$, given $p\in V\cap U\cap\cR$ there exists a sequence $x_n\in\beta_f^+(A)$ such that $\lim_nx_n=p$.
Hence, it follows from the upper semicontinuity of $\omega_f$ at $p$ that  $A=\lim_{n\to\infty}\omega_f(x_n)\subset\omega_f(p)$.
That is, $A\subset\omega_f(p)$ for every $p\in V\cap U\cap\cR$.
On the other hand, by Proposition~\ref{Proposition-u-ergAttrac},  $\omega_f(x)=\Lambda$ for a residual set of points $x\in U$.
This implies that $A\subset\omega_f(x)=\Lambda$ for a residual set of points $x\in V$ and so,  $A\subset\Lambda$.

Now, let $\mu\in\cm^1(\XX)$ be a physical measure such that $\overline{\beta_f(\mu)}\cap U$ is a fat set.
In this case, let $V:=A\cap B\ne\emptyset$, where  $A=\interior(\overline{\beta_f(\mu)})$ and $B$ is any open set such that $B\sim U$. 
Thus, given $p\in V\cap U\cap\cR$ there exists a sequence $x_n\in\beta_f(\mu)$ such that $\lim_nx_n=p$.
Note that $\omega_f^{\star}(x)=\supp\mu$ for every $x\in\beta_f(\mu)$
Thus, by the upper semi continuity of $\omega_f^{\star}$ we get that $\supp\mu=\lim_n\omega_f^{\star}(x_n)\subset\omega_f^{\star}(p)$.
That is, $\supp\mu\subset\omega_f^{\star}(p)$ for every $p\in V\cap U\cap\cR$.
As, by Proposition~\ref{PropositionStatisticalAttractorsU-Baire}, $\omega_f^{\star}(x)=\Lambda^{\star}$ for a residual set of points $x\in U$ and as $\Lambda^{\star}\subset\Lambda$, we get that $\supp\mu\subset\Lambda^{\star}\subset\Lambda$.
\end{proof}

Now we can prove the last theorem of Section~\ref{SecStatOfMainsR}. It can de seen in Section~\ref{SectionCOntF} at Appendix the necessary information about continuous foliations. 

\begin{proof}[\bf Proof of Theorem~\ref{mainThojhgf}]

The statement and the proof of Claim~\ref{Calimliuytr4} below are also true for any continuous foliation.

\begin{Claim}\label{Calimliuytr4}
$W_f^s(U)$ is an open set for every open set $U\subset M$.
\end{Claim}
\begin{proof}[Proof of the claim]
Given  $p\in W_f^s(U)$, let $u\in U$ and $\varepsilon>0$ be such that $p\in W_f^s(u)$ and $B_{\varepsilon}(u)\subset U$.
By the continuity of $W_f^s$, there exists $\delta>0$ such that $W_f^s(q)\cap B_{\varepsilon}(u)\ne\emptyset$ for every $q\in B_{\delta}(p)$.
Thus,  $q\in W_f^s(W_f^s(q)\cap B_{\delta}(u))\subset W_f^s(U)$ $\forall\, q\in B_{\varepsilon}(p)$, proving that $W_f^s(U)$ is an open set.
\end{proof}

As $W_f^s$ is a continuous foliation, one can use the holonomy between local transverse sections to prove the Claim~\ref{Claimjfiopo4} below, see Lemma~\ref{LemmaCotFoliBairePart} at Appendix.

\begin{Claim}\label{Claimjfiopo4}
If $R$ is residual in an open set $U$ then $W_f^s(R)$ is residual in $W_f^s(U)$.
\end{Claim}

Recall the definition of $\mathfrak{I}^u(f)$ in Section~\ref{SectionBaireErgodicity} just below Definition~\ref{Defioihbohb}
and consider $\mathfrak{m}:\mathfrak{I}^u(f)\to[0,+\infty)$ given by $\mathfrak{m}(Y)=\leb(W_f^s(\pi(Y)))$ (see Definition~\ref{DefBaireProj}), as $\pi(Y)$ is an open set, it follows from the Claim~\ref{Calimliuytr4} that $W_f^s(\pi(Y))$ also open, in particular, measurable.
So, $\leb(W_f^s(\pi(Y)))$ is well defined.

It follows from Claim~\ref{Claimjfiopo4} above that, $U\cap V\sim\emptyset$ $\implies$ $W_f^s(\pi(U))\cap W_f^s(\pi(V))=\emptyset$ for every $U$ and $V\in\mathfrak{I}^u(f)$ and so, $\mathfrak{m}$ is a $u$-Baire $f$-function.
Furthermore, it follows from $f$ being a homeomorphism and from the claims above, that  $f^{-1}(\pi(U))=\pi(U)=W_f^s(U)$ for every $U\in\mathfrak{I}^u(f)$ and so, by the hypothesis of the theorem, $\mathfrak{m}(U)=\leb(W_f^s(\pi(U)))=\leb(W_f^s(\bigcup_{n\ge0}f^n(\pi(U))))\ge\leb(M)/\ell$ for every fat set $U\in\mathfrak{I}^u(f)$, where $\ell=\min\{n\in\NN\,;\,n\ge\leb(M)/\varepsilon\}$.

Thus, it follows from Proposition~\ref{PropositionCriteionFor-u-Ergodicity} that there exist Baire ergodic components $U_1,$ $\cdots,$ $U_k$, with $1\le k\le\ell\le\leb(M)/\varepsilon$, such that $M\sim U_1\cup\cdots\cup U_k$.
The proof of items (1) and (2) follows straightforward from Proposition~\ref{Proposition-u-ergAttrac} applied to each $U_j$.
Finally, if $\beta_f(\mu)$ is dense in an open set $V\ne\emptyset$ then $\overline{\beta_f(\mu)}\cap U_j$ is a fat set for some $1\le j\le k$.
Hence, by Theorem~\ref{TheoremErgMetSRB}, $\supp\mu\subset A_j$, concluding the proof of Theorem~\ref{mainThojhgf}.\end{proof}

\subsection{Interval maps}\label{sectionIntMap}
In \cite{Pi21} a more complete set of applications of the ergodic formalism in the study of interval maps is presented, here we give just one example (Theorem~\ref{TheoInterMap}) of such applications, since it is used in the proof of Theorem ~\ref{TheoremSkewOne}.

A $C^2$ interval map $f:[0,1]\circlearrowleft$ is called {\bf\em non-degenerated} if $f$ is non-flat and $\per(f)$ is a meager set.
Recall that $f$ is {\em non-flat} if for each $c\in\cc_f:=(f')^{-1}(0)$  there exist $\varepsilon>0$, $\alpha\ge1$ and a $C^2$  diffeomorphisms $\phi:(c-\varepsilon,c+\varepsilon)\to \text{Im}(\phi)$ such that $\phi(c)=0$ and
$f(x)=f(c)+\big(\phi(x)\big)^\alpha$ for every $x\in(c-\varepsilon,c+\varepsilon)$. 

\begin{Theorem}\label{TheoInterMap}
If a non-degenerated $C^2$ interval map does not admit periodic attractors, then $[0,1]$ can be decomposed (up to a meager set) into a finite collection $U_1,\cdots,U_{\ell}$ of Baire ergodic components ($1\le\ell\le\#\cc_f$), where each  $U_j$ is an open set having a topological attractor $A_j$ such that $\omega_f(x)=A_j$ for a residual set of points $x\in U_j$.
Moreover, each $\overline{U_j}\subset\alpha_f(c_j)$ for some $c_j\in\cc_f$.
\end{Theorem}
\begin{proof}
It has been proved by de Melo and van Strien that a $C^2$ non-flat map interval map does not admit wandering intervals, see Theorem~A in chapter IV of \cite{MvS} (a previous proof for $C^3$ maps appeared in \cite{MS89}).
As $f$ does not have periodic attractors and $\per(f)$ is a meager set, it follows from the {\em Homterval Lemma} (see Lemma~3.1 in \cite{MvS}) that $\interior(\bigcup_{n\ge0}f^n(U))\cap\cc_f\ne\emptyset$ for every open set $U\subset[0,1]$.
This implies that $[0,1]=\bigcup_{c\in\cc_f}\alpha_f(c)$. 
It is easy to see that, if $\alpha_f(c)$ is a fat set, then it is a Baire ergodic component (by Theorem~\ref{TheoremProposi0ytd6881}).
Let $\{c_1,\cdots,c_{\ell}\}\subset\cc_f$ be such that
\begin{enumerate}
\item $\alpha_f(c_j)$ is a fat set for every $1\le j\le \ell$;
\item $\interior(\alpha_f(c_j))\cap\interior(\alpha_f(c_k))=\emptyset$ for $j\ne k$;
\item $\bigcup_{j=1}^{\ell}\interior(\alpha_f(c_j))$ is dense in $[0,1]$.
\end{enumerate}
Thus, taking $U_j:=\interior(\alpha_f(c_j))$ for $1\le j\le \ell$, it follows from Proposition~\ref{Propositiontop-ergodicAttractors} that $\omega_f(x)=A_j$ for a residual set of points $x\in U_j$, where $A_j$ is the topological attractor of $U_j$.
\end{proof}

\subsection{Viana maps}
\label{SectionVmaps} Let us recall the definition of a Viana map.
For that consider the unitary circle $S^1=\RR/\ZZ$, $d\ge 16$, $\alpha>0$, $\sigma:S^1\to S^1 $ given by $\sigma(\theta)=d\,\theta$ mod $\ZZ$ and $g_{\alpha}:S^1\times\RR\to S^1\times\RR$ given by $$g_{\alpha}(\theta,x)=(\sigma(\theta),a_0+\alpha\sin(2\pi\theta)-x^2),$$
where $a_0$ is such that the point $0\in\RR$ is pre-periodic to the quadratic map $q(x):= a_0+x^2$.
In \cite{Vi}, Viana  proved that there exists  $\alpha>0$ small, a closed  interval $I\subset(-2,2)$ and   $C^3$ small neighborhood $\cn$ of $g_\alpha$ such that if $g \in\cn$ then
\begin{enumerate}
\item $g(S^1\times I)\subset S^1\times I$;
\item $\bigcap_{n\ge0} g^n(S^1\times I)$ is a forward invariant compact set with nonempty interior;
\item Lebesgue almost every point $p\in\bigcup_{n\ge0} g^n(S^1\times I)$ has all its Lyapunov exponents positive (with respect to $g$);
\item the critical set of $\cc_\phi=\{x\,;\,\det Dg(x)=0\}$ is the graph of a $C^2$ function $c_g:S^1\to\RR$ arbitrarily close to the null function. In particular, the critical set of $g$ is non-flat. 
\end{enumerate}
A {\bf\em Viana map} is a map $f:J\circlearrowleft$ given by $f:=g|_{J}$, where $g\in \cn$ and $J=\bigcap_{n\ge0}   g^n(S^1\times I)$.

\begin{Theorem}\label{TheorenmVianaMpashg}
If $f:J\circlearrowleft$ is a Viana map then the following statements are true.
\begin{enumerate}
\item Given a Borel measurable bounded function $\varphi:J\to\RR$, there exist $\gamma\in\RR$ and a residual set $\cR\subset J$ such that $$\limsup_{n\to+\infty}\frac{1}{n}\sum_{j=0}^{n-1}\varphi\circ f^{j}(x)=\gamma,\;\;\forall x\in\cR.$$
Moreover, if $\varphi$ is continuous  then $
  \gamma=\max\left\{\int\varphi d\mu\,;\,\mu\in\cm^1(f)\right\}
$.
 
\item Given a Borel set $V\subset J$, there exist $\theta\in[0,1]$ and a residual set $\cR\subset J$ such that $$\tau_x(V)=\theta,\;\;\forall x\in\cR.$$
Moreover, $\sup\left\{\mu\left(\overline{U}\right);\,\mu\in\cm^1(f)\right\}$ $\ge$ $\theta\ge
\sup\left\{\mu\left(U\right);\,\mu\in\cm^1(f)\right\}$,
where $U$ is any open set such that $V\sim U$.
\end{enumerate}
\end{Theorem}
\begin{proof}Note that $f$ is non-singular continuous map.
Indeed, since $\cc_f=(\det Df)^{-1}(0)$, the critical set of $f$, is a compact set with empty interior, we get that $f$ is a local diffeomorphism on the open and dense set $J\setminus\cc_f$, showing that $f$ is non-singular.

Since Theorem~C of \cite{AV} says that $f$ is a strongly transitive map (in particular, $f$ is transitive), the proof of Theorem~\ref{TheorenmVianaMpashg} follows from Theorem~\ref{mainTheoTrans}~and~\ref{TheoremUEorHB}.
\end{proof}

\subsection{Non-uniformly hyperbolic dynamics}\label{Sectionjgtiyt9675}
Let $M$ be a Riemannian manifold and consider a   non-flat map  $f\in C^1(M,M)$, i.e.,  $\cc:=\{x\in\XX\,;\,\det Df(x)=0\}$ is a compact meager set and  the following conditions hold for some $\beta,B>0$.
\begin{enumerate}
\item[(C.1)]
\quad $\displaystyle{(1/B)\dist(x,\cc)^{\beta }|v|\le|Df(x)v|\le B\,\dist(x,\cc)^{-\beta }}|v|$ for all $v\in T_x {M}$.
\end{enumerate}
For every $x,y\in {M}\setminus\cc$ with
$\dist(x,y)<\dist(x,\cc)/2$ we have
\begin{enumerate}
\item[(C.2)]\quad $\displaystyle{\left|\log\|Df(x)^{-1}\|-
\log\|Df(y)^{-1}\|\:\right|\le
(B/\dist(x,\cc)^{\beta })\dist(x,y)}$.
\end{enumerate}

A set $\Lambda\subset M$ has {\bf\em slow recurrence to the critical/singular region} (or  {\bf\em satisfies the slow approximation condition}) if
for each $\varepsilon>0$ there is a $\delta>0$
such that
\begin{equation}\label{EquationSlowRec}
\limsup_{n\to+\infty}
\frac{1}{n} \sum_{j=0}^{n-1}-\log \mbox{dist}_{\delta}(f^j(x),\cc)
\le\varepsilon
\end{equation}
for every $x\in\Lambda$,
where $\dist_{\delta}(x,\cc)$ denotes the $\delta$-{\bf\em truncated distance} from $x$ to $\cc$ defined as
$\dist_{\delta}(x,\cc)=\dist(x,\cc)$ if $\dist(x,\cc) \leq \delta$
and $\dist_{\delta}(x,\cc) =1$ otherwise.

We say that $\Lambda\subset M$ is a {\bf\em non-uniformly expanding} set, NUE for short,  if $\Lambda$ has slow recurrence to the crittical/singular region and 
\begin{equation}\label{EquationNUE}
   \limsup_{n\to+\infty}\frac{1}{n}\sum_{j=0}^{n-1}\log\|(Df\circ f^j(x))^{-1}\|^{-1}\ge\lambda>0
\end{equation}
for every $x\in\Lambda$.

The main property of a point $x\in M$ satisfying (\ref{EquationSlowRec}) and (\ref{EquationNUE}) is the existence of {\em hyperbolic pre-balls}. Given $0<\sigma<1$ and $\delta>0$, a {\bf\em $(\sigma,\delta)$-hyperbolic pre-ball of center $x$ and order} $n\in\NN$ is an open set $V_n(x)$ containing $x$ such that 
\begin{enumerate}
\item $f^{n}$ maps $\overline{V_n(x)}$ diffeomorphically onto the ball
$\overline{B_{\delta}(f^{n}(x))}$;
\item $dist(f^{n-j}(y),f^{n-j}(z)) \le
\sigma^{j}\dist(f^{n}(y),f^{n}(z))$ $\forall y, z\in V_n(x)$ and $1\le j<n$.
\end{enumerate}

\begin{Lemma}[Lemma~5.2 of \cite{ABV}]\label{LemmaABV}
	If $x$ satisfies (\ref{EquationNUE}) then there exists $\NN_x\subset\NN$, with $\limsup_n\frac{1}{n}\#(\{1,$ $\cdots,$ $n\}$ $\cap$ $\NN_x)>0$, $0<\sigma<1$ and $\delta>0$ ($\sigma$ and $\delta$ depending only on $\lambda$) such that for every $n\in\NN_x$ the $(\sigma,\delta)$-hyperbolic pre-ball $V_n(x)$ is well defined.
\end{Lemma}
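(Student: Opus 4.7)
The plan is to obtain the hyperbolic times via a two-stage procedure: first extract from (\ref{EquationNUE}) a set of integers of positive upper density where the logarithmic Birkhoff sums are ``uniformly big from every suffix,'' and then use this together with the slow recurrence to the critical set to build, by backward induction, the pre-balls $V_n(x)$ with the advertised diffeomorphic and contraction properties.

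First I would set $a_j = \log\|(Df\circ f^j(x))^{-1}\|^{-1}$ and note that condition (C.1) gives $a_j \ge -\beta\log B - \beta\log\dist(f^j(x),\cc)$, which is bounded above and integrable; combined with (\ref{EquationNUE}) one has $\limsup_n \frac{1}{n}\sum_{j=0}^{n-1} a_j \ge \lambda > 0$. Applying \emph{Pliss's Lemma} to the sequence $\{a_j\}$ with threshold $\lambda/2$ yields a set $\NN_x \subset \NN$ of positive upper density such that, for each $n \in \NN_x$ and every $0 \le k < n$,
\begin{equation*}
\sum_{j=k}^{n-1} a_j \;\ge\; \tfrac{\lambda}{2}(n-k).
\end{equation*}
Equivalently, setting $\sigma = e^{-\lambda/4}$ (say), at each such $n$ one has $\prod_{j=k}^{n-1}\|(Df\circ f^j(x))^{-1}\| \le \sigma^{n-k}$ for all $0\le k < n$. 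These are the candidate hyperbolic times.

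Next, I would use the slow approximation (\ref{EquationSlowRec}) with $\varepsilon$ chosen small compared to $\lambda/\beta$: this guarantees, up to passing to a positive-density subset of $\NN_x$ (still denoted $\NN_x$), that $\dist(f^j(x),\cc) \ge e^{-\varepsilon(n-j)}$ for all $0 \le j < n$ and $n \in \NN_x$, after a standard density argument. With this control, the bounded distortion condition (C.2) becomes usable: one constructs $V_n(x)$ by pulling back balls $B_{\delta_j}(f^{n-j}(x))$ of geometrically shrinking radii $\delta_j = \sigma^{j/2}\delta$ under local inverse branches of $f$, using induction on $j = 1,\dots,n$ to check that at each step the relevant ball lies well inside $B(f^{n-j}(x),\dist(f^{n-j}(x),\cc)/2)$, so (C.2) applies and no critical point is met. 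The Pliss estimate then yields the contraction $\dist(f^{n-j}(y),f^{n-j}(z)) \le \sigma^j \dist(f^n(y),f^n(z))$ for $y,z \in V_n(x)$.

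The main obstacle, and the one demanding the most care, is the inductive distortion control along the pulled-back tube: one must balance the backward contraction factor $\sigma^j$ (from Pliss) against the possibly huge expansion $\dist(f^{n-j}(x),\cc)^{-\beta}$ near the critical set (from (C.1)). Choosing $\varepsilon$ small enough relative to $\lambda$ and $\beta$ in the slow-recurrence hypothesis is precisely what makes the geometric sum of the distortion errors converge, so that $f^n$ restricted to $V_n(x)$ is a diffeomorphism onto $B_\delta(f^n(x))$ with the prescribed exponential backward contraction, with $\delta$ depending only on $\lambda$, $\beta$, $B$ and the geometry of $\cc$. The positive upper density of $\NN_x$ is preserved throughout since both the Pliss selection and the slow-recurrence restriction remove only a set of density strictly less than the density provided by Pliss's Lemma, which depends only on $\lambda$ and the upper bound on $\{a_j\}$.
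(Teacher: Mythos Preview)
The paper does not supply a proof of this lemma: it is stated purely as a citation of Lemma~5.2 from \cite{ABV}, with no argument given in the present text. Your proposal is therefore not being compared against anything in this paper; rather, you have reconstructed (correctly, in outline) the original argument from \cite{ABV}: Pliss's Lemma to extract the set $\NN_x$ of hyperbolic times with positive upper density from (\ref{EquationNUE}), the slow-recurrence condition (\ref{EquationSlowRec}) to keep the orbit away from $\cc$ at a controlled rate, and backward induction using (C.1)--(C.2) to propagate bounded distortion and build the pre-balls $V_n(x)$.

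One minor technical remark: your statement that ``(C.1) gives $a_j \ge -\beta\log B - \beta\log\dist(f^j(x),\cc)$, which is bounded above'' is slightly garbled --- what Pliss's Lemma needs is an \emph{upper} bound on the individual $a_j$, and near $\cc$ this is exactly where (C.1) and the slow-recurrence interplay become delicate (in the local-diffeomorphism case $a_j \le \log\sup\|Df\|$ suffices; with a critical set one must be more careful, as \cite{ABV} handles). But this is a detail within an otherwise faithful sketch of the cited proof, and since the present paper offers no alternative argument, there is nothing further to compare.
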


Hence, the existence of a dense NUE set implies that $f$ is a topologically growing map and so we have the following corollary of Theorem~\ref{TheoremFatErgodicAttractors2MAIN} (or  Theorem~\ref{TheoremFatErgodicAttractors2}).

\begin{Corollary}
\label{TheoremForToNUE}
 If $f$ has a dense NUE set $\Lambda$ then there exists a finite collection of topological attractors $A_1,\cdots,A_{\ell}$ satisfying  the following properties.
\begin{enumerate}
\item $\beta_f(A_1)\cup\cdots\cup\beta_f(A_{\ell})$ contains an open and dense subset of $M$.
\item $\omega_f^{\star}(x)=\omega_f(x)=A_j$ for a residual set of points $x\in\beta_f(A_j)$ for every $1\le j\le\ell$.
\item \label{itemjhgfd1} $f|_{A_j}$ has an uncountable set of invariant ergodic probability measures $\mu$ with all its Lyapunov exponents being positive.
\item \label{itemjhgfd2} The set of expanding periodic points is dense in $A_j$.
\item The set $\ca_j=\{x\in A_j\,;\,\alpha_f(x)\supset A_j\}$ is a forward invariant set containing an open and dense subset of $A_j$ and $f|_{\ca_j}$ is strongly transitive.
\item  If $x$ is a generic point of $U_j$ and $\varphi\in C(M,\RR)$, then  
$$
  \limsup_{n\to+\infty}\frac{1}{n}\sum_{j=0}^{n-1}\varphi\circ f^j(x)\ge\sup\bigg\{\int\varphi d\mu\,;\,\mu\in\cm^1(f|_{\ca_j})\bigg\}.
$$
\end{enumerate}
Furthermore, generically, the points of $M$ have historic behavior. 
\end{Corollary}
\begin{proof}
All items, with the exception of items {\em (\ref{itemjhgfd1})} and {\em (\ref{itemjhgfd2})}, are a direct consequence of Theorem~\ref{TheoremFatErgodicAttractors2MAIN}.
Hence, we want to comment only the two exceptions.
As $M$ is a compact manifold, we get from Theorem~\ref{TheoremFatErgodicAttractors2MAIN} (or Theorem~\ref{TheoremFatErgodicAttractors2}) that $\per(f)$ are dense in $A_j$.
In the proof of Theorem~\ref{TheoremFatErgodicAttractors2}, we use  Brouwer fixed-point theorem to produce a dense set of periodic points.
Here, as we have contraction in the hyperbolic pre-ball, we can use the Banach fixed-point theorem and obtain a dense set of expanding periodic points.
Moreover, the ``horseshoe'' that appears on Claim~\ref{ClaimLocalHorseshoes} is only a topological one, here the same argument produces a uniformly expanding horseshoe $F_j:\Lambda_j\circlearrowleft$ conjugated to the shift $\sigma:\Sigma_2^+\circlearrowleft$, where $F_j$ is a $f$-induced map and $\Lambda_j\subset A_j$.
Thus, all the uncountable $f$-invariant probability measures produced by $F_j$ are expanding probability measures (i.e., with all their Lyapunov exponents positive) and their support are contained in $A_j$ (so, they are $f|_{A_j}$-invariant probability measures).
\end{proof}

Notice that the hypothesis of the existence of a NUE dense set appear with frequency in the literature.
Indeed, it is common to assume the strong hypothesis of maps having a NUE set with full Lebesgue measure.
For more information of such maps see, for instance, \cite{ABV,Pi11, Pi06, PV, LPV}. 
We can also mention the fact that all  cycle of interval for $C^{1+}$ interval maps with non-flat critical region has a dense NUE subset.

Using $u$-Baire ergodicity, one can obtain for {\em Partially Hyperbolic Systems} a result similar to Corollary~\ref{TheoremForToNUE} above. 
Given a $C^1$ diffeomorphism $f:M\circlearrowleft$, we say that $f$ is  {\bf\em partially hyperbolic} (with a strong stable direction)  if there exist a $Df$-invariant splitting $TM=\EE^{c}\oplus\EE^{s}$, $C>0$, $\lambda>1$ and $\sigma\in(0,1)$ such that the following two conditions holds:
\begin{enumerate}
\item $\|Df|_{\EE^{s}}(x)\|\|Df^{-1}|_{\EE^{c}}(x)\|\le\sigma$ for every $x\in M$ and
\item $\|Df^n|_{\EE^{s}}(x)\|\le C\lambda^{-n}$ for every $x\in  M$.
\end{enumerate}

An invariant set $\Lambda\subset M$ is a {\bf\em non-uniformly hyperbolic set (NUH)} for the partially hyperbolic diffeomorphism  $f$ if the third condition below holds:
\begin{enumerate}
	\item[(3)] $\limsup\frac{1}{n}\sum_{j=0}^{n-1}\log\|(Df|_{\EE^{c}}\circ f^j(x))^{-1}\|^{-1}\ge\lambda>0$ for every $x\in \Lambda$.
\end{enumerate}

\begin{Theorem}\label{TheoremForToPH}
 If a partially hyperbolic $C^1$ diffeomorphism $f:M\circlearrowleft$ defined on a compact Riemannian manifold has dense NUH set $\Lambda$ then there exists a finite collection of topological attractors $A_1,\cdots,A_k$ satisfying the following properties.
\begin{enumerate}
\item $\beta_f(A_1)\cup\cdots\cup\beta_f(A_k)$ contains an open and dense subset of $M$.
\item $\omega_f(x)=A_j$ for a residual set of points $x\in\beta_f(A_j)$ for every $1\le j\le k$.
\item If $\mu$ is a SRB/physical measure for $f$ then $\supp\mu\subset A_j$ for some $1\le j\le k$ or $\beta_f(\mu)$ is a nowhere dense set.
\end{enumerate}
\end{Theorem}
\begin{proof}[Sketch of the proof] As $\limsup\frac{1}{n}\sum_{j=0}^{n-1}\log\|(Df|_{\EE^{c}}\circ f^j(x))^{-1}\|^{-1}\ge\lambda>0$ for $x\in\Lambda$, we get a property similar of the hyperbolic pre-balls.
That is, following \cite{ABV} (see Lemma~2.7 and 2.10 at \cite{ABV}, see also \cite{AP}), there are  $0<\sigma<1$ and $\delta>0$ and $\NN_x\subset\NN$ ($\#\NN_x=+\infty$) such that, for any $n\in\NN_x$ there is a ``hyperbolic pre-disc'' $V_n^{c}(x)$ containing $x$, tangent to $\EE^{c}$, and satisfying 
\begin{enumerate}
\item $f^{n}$ maps $\overline{V_n^{c}(x)}$ diffeomorphically onto the center-unstable disc
$\overline{B_{\delta}^{c}(f^{n}(x))}$  (\footnote{ $B_{\delta}^{c}(p)=\bigcup_{\gamma\in\Gamma_{c}(p,\delta)}\gamma([0,1])$, where  $\Gamma_{c}(p,\delta)$ is the set of all $C^1$ curves $\gamma:[0,1]\to M$ such that $\gamma(0)=p$, $\gamma'(t)\in\EE^{c}(\gamma(t))$ for every $t\in[0,1]$ and $\int_0^1|\gamma'(t)|dt=\delta$.});
\item $dist_{c}(f^{n-j}(y),f^{n-j}(z)) \le
\sigma^{j}\dist_{c}(f^{n}(y),f^{n}(z))$ $\forall y, z\in V_n^{c}(x)$ and $1\le j<n$ (\footnote{ $\dist_{c}(p,q)=\min\{\int_0^1|\gamma'(t)|dt\,;\,\gamma\in C^1([0,1],M),\gamma'(t)\in\EE^{c}(\gamma(t)),\gamma(0)=p,\gamma(1)=q\}$.}).
\end{enumerate}
Moreover, as the angle for all $x\in M$ between the $\EE^s(x)$ and $\EE^c(x)$ is bounded from zero, there is $r>0$ not depending on $x$ such that $W_f^s(B_{\delta}^c(p))\supset B_r(p)$ for every $p\in M$. In particular, $W_f^s(f^n(V_n^c(x)))\supset B_r(f^n(x))$ always.
Therefore, it follows from the denseness of $\Lambda$ that, for every open set $U\subset M$, taking $p\in\Lambda\cap U$ and $n\in\NN_p$ big enough, we get $W_f^s(f^n(U))\supset W_f^s(V_n^c(p))\supset B_r(f^n(p))$  (see Figure~\ref{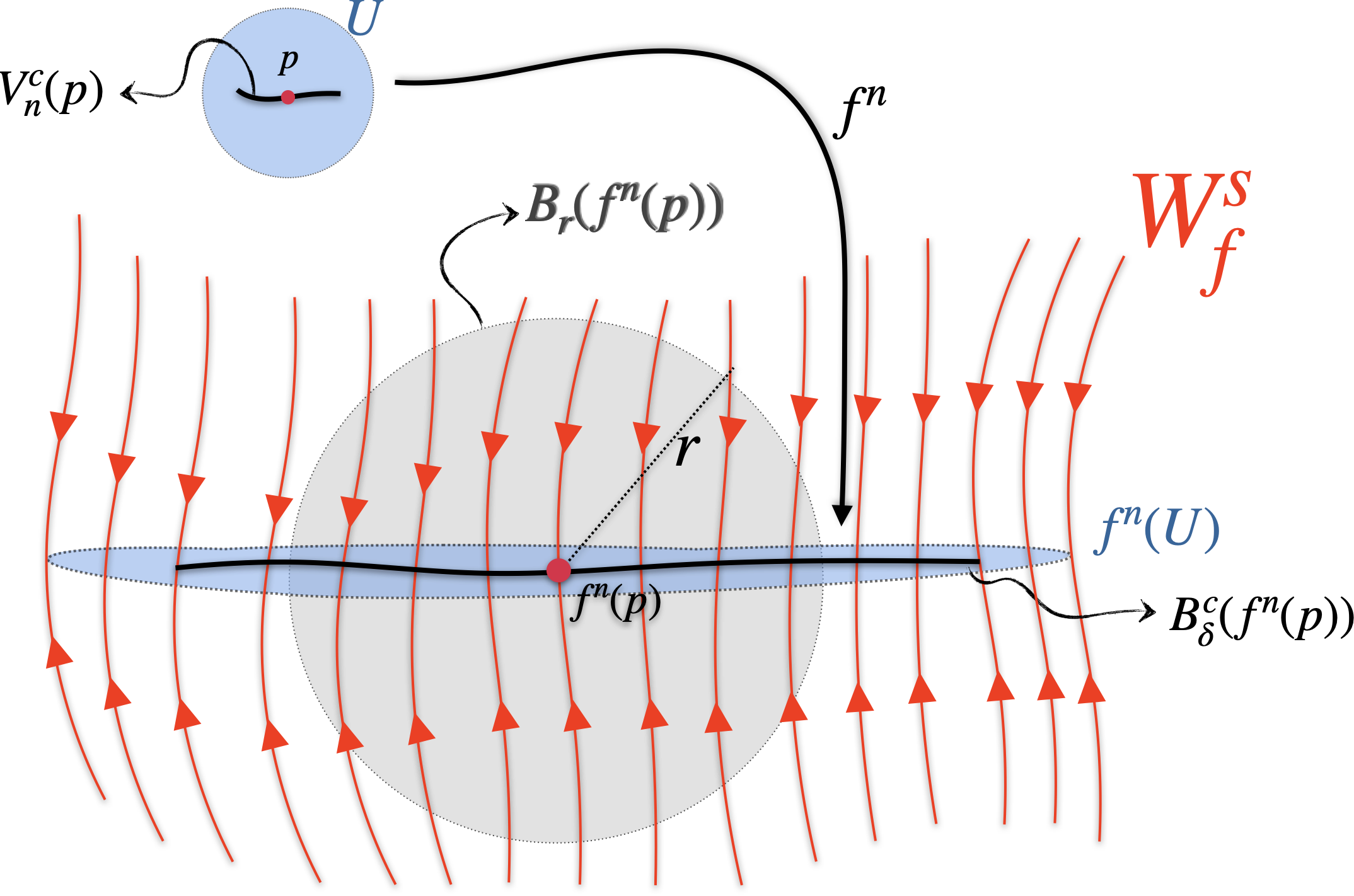}).
\begin{figure}
\begin{center}\includegraphics[scale=.25]{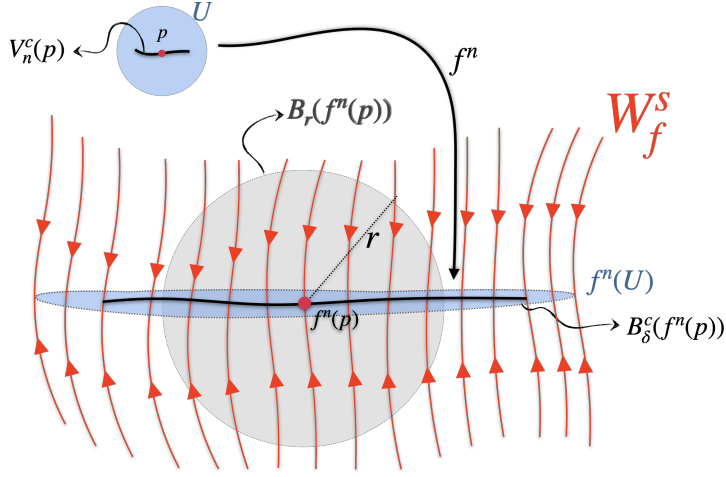}
\caption{The figure shows the ball $B_r(f^n(p))$ of radius $r>0$ contained in $W_f^s(V_n^c(p))\subset W_f^s(f^n(U))$.}\label{ParHy.png}
\end{center}
\end{figure}
 Hence, to conclude the the proof, it is enough to note that $f, W_f^s$ and $\ell$ satisfy the hypothesis of Theorem~\ref{mainThojhgf}, where $\ell=\min\big\{n\ge \frac{\leb(M)}{\min\{\leb(B_{\delta}(x))\,;\,x\in M\}}\,;\,n\in\NN\big\}$.\end{proof}

\subsection{Skew products}
In Theorem~\ref{TheoremSkewOne} below we give a simple condition to prove the existence and finiteness of topological attractors for skew products with one-dimensional fiber.

\begin{Theorem}[Skew products with one-dimensional fiber]\label{TheoremSkewOne}
Let $M$ be a compact Riemannian manifold and $F:M\times[0,1]\circlearrowleft$ be a $C^2$ map with $(\det DF)^{-1}(0)$ being meager and given by $F(x,t):=(g(x),f(x,t))$, where 
\begin{enumerate}
\item $g:M\circlearrowleft$ is a strongly transitive local homeomorphism and 
\item $\#(f_x')^{-1}(0)<\infty$  for every $x\in M$, with $f_x:[0,1]\circlearrowleft$ given by $f_x(t)=f(x,t)$.
\end{enumerate}
Suppose that there exists $p\in\per(g)$, with period $n\in\NN$, such that $f_p{^n}$ is non-degenerated interval map (see Section~\ref{sectionIntMap}).
If $f_p{^n}$ does not have periodic attractors, then there exists a finite collection of topological attractors $A_1,\cdots,A_{\ell}\subset M\times[0,1]$,  such that
$$\beta_F(A_1)\cup\cdots\cup\beta_F(A_{\ell})\sim M\times[0,1].$$
Moreover $\omega_F(x)=A_j$ for a residual set of points $x\in\beta_F(A_j)$, 
 $\ell$ $\le$ $\#\{c\in[0,1]\,;\,(f_p^n)'(c)=0\}$ and $F$ has historic behavior generically on $M\times[0,1]$ when  $g$ has historic behavior generically on $M$.
\end{Theorem}
\begin{proof} Note that $F$ is a non-singular map, since $\cc=(\det DF)^{-1}(0)$ is meager and  $F|_{(M\times[0,1])\setminus\cc}$ is a local diffeomorphism.

As $T:=f_p{^n}$ is a non-degenerated $C^2$ interval map, it follows from Theorem~\ref{TheoInterMap} that there exists $\{c_1,\cdots,c_{\ell}\}\subset\cc_{_T}:=(T')^{-1}(0)$ such that $U_j^0:=\interior(\alpha_{_T}(c_j))$ is a Baire ergodic component to $T$ and $\bigcup_{j=0}^{\ell}\interior(\alpha_{_T}(c_j))$ is dense in $[0,1]$.

As $\alpha_g(p)={M}$, given an open set $V\subset{M}\times[0,1]$,  there is $n\ge0$ such that $F^n(V)\cap (\{p\}\times[0,1])\ne\emptyset$.
Since $g$ is a local homeomorphism and $f_x$ piecewise monotone interval map, we get that $F^n(V)\supset B_{\varepsilon}(p)\times(a,b)$ for some $\varepsilon>0$ and $0<a<b<1$, where $B_{\varepsilon}(p)\subset{M}$ is the open ball on ${M}$ of radius $\varepsilon$ and center $p$.
Thus, it follows from Theorem~\ref{TheoInterMap} that $(a,b)\cap U_j^0\ne\emptyset$ for some $0\le j\le \ell$ and so, $T^k((a,b))\ni c_j$ for some $k\ge0$.
This implies that there exists $m\ge0$ such that $F^m(V)$ contains an open neighborhood of $(p,c_j)$. Therefore, one can find $\{c_1',\cdots,c_s'\}\subset\{c_1,\cdots,c_{\ell}\}$ such that 
\begin{itemize}
\item $\alpha_F((p,c_j'))$ is a fat set  for every $1\le j\le s$;
\item $\alpha_F((p,c_j'))\cap\alpha_F((p,c_k'))\sim\emptyset$ for $j\ne k$;
\item $\bigcup_{j=1}^{s}\alpha_F((p,c_j'))\sim{M}\times [0,1]$.
\end{itemize}

Taking $U_j=\alpha_F((p,c_j'))$, we have that $F(U_j)\subset U_j$ and $\bigcup_{n\ge0}F^n(V)$ contains an open neighborhood of $(p,c_j)$ for every nonempty open set $V\subset{M}\times[0,1]$ such that $V\cap U_j\ne\emptyset$.
This implies that $F|_{U_j}$ is asymptotically transitive.
Since, $F$ is a non-singular continuous map, it follows from Theorem~\ref{TheoremProposi0ytd6881} that $F|_{U_j}$ is Baire ergodic.

As $F(U_j)\subset U_j$ $\forall j$, $\bigcup_{j=1}^{s}\alpha_F((p,c_j'))\sim{M}\times [0,1]$ and $\alpha_F((p,c_j'))\cap\alpha_F((p,c_k'))\sim\emptyset$ for $j\ne k$, we get that $U_j\sim F^{-1}(U_j)$.
Therefore, $U_1,\cdots,U_s$ are Baire ergodic decomposition for $F$.
As $U_1\cup\cdots\cup U_s\sim{M}\times[0,1]$, the topological attractors $A_j$ for $U_j$ (given by Proposition~\ref{Propositiontop-ergodicAttractors} applied to $F|_{U_j}$), are such that $\omega_F(x)=A_j$ for a residual set of points $x\in U_j$.
Hence, $\beta_f(A_j)\supset U_j$ contains an open set (by the continuity of $F$), $\beta_f(A_1)\cup\cdots\cup\beta_f(A_s)\sim{M}\times[0,1]$ and $\omega_F(x)=A$ for a residual set of points $x\in\beta_f(A_j)$.

To conclude the proof is enough to use  the fact that the existence of a residual subset $R$ of ${M}$ with all $x\in R$ having historical behavior for $g$ implies that all points of the residual set  $R\times[0,1]$ have historic behavior for $F$.
\end{proof}

A concrete application of Theorem~\ref{TheoremSkewOne} can be the following.
Take an initial logistic map  $f_{t_0}(x)=4t_0x(1-x)$, $f_{t_0}:[0,1]\to[0,1]$, where $t_0\in(0,1]$ is such that $f_{t_0}$ does not have a periodic attractor.
For instance, $f_{t_0}$ may be infinitely renormalizable (Exemple~\ref{Unimodal}), a Misiurewicz map or a map with an absolutely continuous invariant measure.
Consider $g:\TT^2\circlearrowleft$ a $C^1$ the uniformly expanding map on the torus $\TT^2$ induced by the linear map $L(x,y)=(3x+y,x+2y)$, $\varphi:\TT^2\to(0,1)$ a continuous map such that $\varphi([(0,0)])=t_0$ and $F:\TT^2\times[0,1]\circlearrowleft$ given by $F(p,x)=(g(p),4\varphi(p)x(1-x))$.
In this case, $F$ has a single topological attractor $A$, $\beta_F(A)\sim\TT^2\times[0,1]$, $\omega_F(p,x)=A$ and $(p,x)\in\TT^2\times[0,1]$ has historic behavior for a residual set of points $(p,x)\in\TT^2\times[0,1]$.

In Theorem~\ref{TheoremSkewMult} below, we present  a version of  Theorem~\ref{TheoremSkewOne} for skew products with multidimensional fiber.

\begin{Theorem}[Skew products with multidimensional fiber]\label{TheoremSkewMult}
Let $\XX$ and $\YY$ be compact metric spaces and $F:\XX\times\YY\circlearrowleft$ a continuous map such that $\#F^{-1}(p)<+\infty$ $\forall p\in\XX\times\YY$.
Suppose that $F(x,y)=(g(x),f(x,y))$, where $g:\XX\circlearrowleft$ be is strongly transitive local homeomorphism.
Suppose that $f_x:\YY\circlearrowleft$ a non-singular map for every $x\in\XX$, where $f_x(y)=f(x,y)$.

If there is $p\in\per(g)$  such that  ${f_p}^n$ is a growing map, where $n$ is the period of $p$, then there exists a finite collection of topological attractors $A_1,\cdots,A_{\ell}$ such that
\begin{enumerate}
	\item $\beta_F(A_1)\cup\cdots\cup\beta_F(A_{\ell})$ contains a residual  subset of $\XX\times\YY$ and,
\item for every $1\le j\le\ell$, $\omega_F(x)=A_j$ for a residual set of points $x\in\beta_F(A_j)$.
\end{enumerate}
Moreover, $F$ has historic behavior generically on $\XX\times\YY$.
\end{Theorem}
\begin{proof}[Sketch of the proof]
Using that $g$ is a local homeomorphism and $f_x$ is non-singular for every $x$, we get that $F$ is non-singular.

Let $h={f_p}^n$.
As we are assuming that $h$ is a growing map, it follows from Theorem~\ref{TheoremFatErgodicAttractors2MAIN} that there is a finite collection of topological attractors $A_{1,h},\cdots,A_{\ell,h}$ such that $\beta_h(A_{1,h})\cup\cdots\cup\beta_h(A_{\ell,h})$ contains an open and dense subset of $\YY$.
Moreover, each $A_{j,h}$ contains a $h$ forward invariant set $\ca_{j,h}$ such that $\alpha_h(p)\supset\overline{U_{j,h}}$, where $U_{j,h}$ is the Baire ergodic component associated to $A_{j,h}$.
As $\YY=\overline{U_{1,h}}\cup\cdots\cup\overline{U_{\ell,h}}$, choosing any collection of points $c_1,\cdots,c_{\ell}$, with $c_j\in\ca_{j,h}$, we get that $V_j:=\overline{\co_F^-(p,c_j)}$ is an $F$ invariant set with $\interior(V_j)\supset\{p\}\times\ca_{j,h}$.
Moreover, it is not difficult to see that $F|_{V_j}$ is asymptotically transitive and so, by Theorem~\ref{TheoremProposi0ytd6881}, $F|_{V_j}$ is Baire ergodic, proving that $V_j$ is a Baire ergodic components of $F$.
Thus, items (1) and (2) follows from Proposition~\ref{Propositiontop-ergodicAttractors}.

Note that, by Theorem~\ref{TheoremFatErgodicAttractors2MAIN}, $\#\Agemo_h(x)\ge2$ for a residual set of $x\in\beta_h(A_{j,h})$, since a generic point of $\beta_h(A_{j,h})$ has historical behavior.
Thus, since $\ca_{j,h}\setminus\beta_h(A_{j,h})\sim\emptyset$, we can choose the points $c_j$ so that $\Agemo_h(c_j)\ge2$ for every $1\le j\le \ell$.
Using this, we can follow the argument on the proof of Proposition~\ref{PropositionMataLeao} to show that $\Agemo_F(x)\supset\Agemo_F((p,c_j))$ for a residual set of points $x\in\beta_F(A_j)$, where $A_j$ is given by item (1).

As $\#\Agemo_h(c_j)\ge2$ and $\Agemo_h(c_j)\subset\Agemo_{F^n}((p,c_j))$, we also have $\#\Agemo_{F}((p,c_j))\ge2$ and so, $\Agemo_F(x)\ge2$ residual set of points $x\in\beta_F(A_j)$, showing that a generic point $x\in\beta_F(A_j)$ has historical behavior $\forall j$. 
Thus, generically, the points of $\XX\times\YY$ have historical behavior, since $\beta_F(A_1)\cup\cdots\cup \beta_F(A_\ell)\sim\XX\times\YY$.  
\end{proof}

\section{Appendix}

\subsection{Continuous non-singular maps}

\begin{Lemma}\label{Lemma1passo}
Let $\YY$ and $\XX$ be Baire metric spaces and $h:\YY\to\XX$ be a non-singular, continuous and surjective map.
If $A$ is a residual subset of $\YY$ then $h(A)$ is a residual subset of $\XX$.
\end{Lemma}
\begin{proof}
If $h(A)$ is not residual in $\XX$ then there exists a nonempty open set $V\subset\XX$ such that $h(A)\cap V$ is a meager set.

Taking $\XX'=V$, $\YY'=h^{-1}(\XX')$, $A'=A\cap\YY'$ and $g=h|_{\YY'}$, we get that $\YY'$ is an nonempty open subset of $\YY$ (by the continuity of $h$), $A'$ is a residual subset of $\YY'$ and $g:\YY'\to\XX'$ is a non-singular, continuous and surjective map.
But this is impossible, since $g$ is non-singular, $h(A)\cap V$ is a meager set and $A'=g^{-1}(g(A'))=g^{-1}(h(A)\cap V)$ is a residual set.
\end{proof}

\begin{Lemma}\label{LemmaDoInterior}
Let $\YY$ and $\XX$ be Baire metric spaces and $h:\YY\to\XX$ a continuous map.
If $h$ is non-singular then $h(A)\subset\overline{\interior(h(A))}$ for every open set $A\subset\YY$.
\end{Lemma}
\begin{proof}
Let $a\in A$, where $A\subset\YY$ is an open set, and consider the following claim.
\begin{claim}
$h\big(\,\overline{B_{\delta}(a)}\,\big)\subset\overline{\interior\big(h\big(\,\overline{B_{\delta}(a)}\,\big)\big)}$ for every $\delta>0$.
\end{claim}
\begin{proof}[Proof of the claim]
Otherwise, for some $\delta>0$, there exist $p\in \overline{B_{\delta}(a)}$ and $\varepsilon>0$ such that $$B_{\varepsilon}(h(p))\cap h(\overline{B_{\delta}(a)})\subset\partial\big(h(\overline{B_{\delta}(a)})\big).$$
This implies that  $B_{\varepsilon}(h(p))\cap h(\overline{B_{\delta}(a)})$ is a meager set and, as $h$ is non-singular, we get also that $h^{-1}\big(B_{\varepsilon}(h(p))\cap h(\overline{B_{\delta}(a)})\big)$ is a meager set.
By continuity, $h^{-1}(B_{\varepsilon}(h(p)))$ is an open set containing the point $p\in\overline{B_{\delta}(a)}$ and so, $h^{-1}(B_{\varepsilon}(h(p)))\cap B_{\delta}(a)$ is a nonempty open set.
As $h^{-1}(B_{\varepsilon}(h(p)))\cap B_{\delta}(a)$ is 
contained in the meager set $h^{-1}\big(B_{\varepsilon}(h(p))\cap h(\overline{B_{\delta}(a)})\big)$, we get a   contradiction.
\end{proof}

Taking $r>0$ such that $B_r(a)\subset A$, it follows form the claim above that $$h(a)\in h(B_r(a))=h\bigg(\bigcup_{0<\delta<r}\overline{B_{\delta}(a)}\bigg)=\bigcup_{0<\delta<r}h(\overline{B_{\delta}(a)})\subset$$
$$\subset\bigcup_{0<\delta<r}\overline{\interior\big(h\big(\,\overline{B_{\delta}(a)}\,\big)\big)}\subset\overline{\interior\big(h\big(B_r(a)\big)\big)}\subset\overline{\interior(h(A))}.$$
\end{proof}

\begin{Corollary}
\label{CorLemResnaoSingsing}
Let $\YY$ and $\XX$ be Baire metric spaces and $h:\YY\to\XX$ be a non-singular continuous map.
If $A$ is a residual subset of $\YY$ then $h(A)$ is a residual subset of $h(\YY)$.
\end{Corollary}
\begin{proof}
It follows from Lemma~\ref{LemmaDoInterior} that $h(\YY)\sim\interior(h(\YY))\ne\emptyset$.
Take $\XX'=\interior(h(\YY))$, $\YY'=h^{-1}(\XX')$, $A'=A\cap\YY'$ and $h'=h|_{\YY'}$.
Since $\XX'\subset h(\YY)$, we get that $h'$ is a non-singular, continuous and surjective map.
Therefore, it follows from Lemma~\ref{Lemma1passo} that $h'(A')$ is a residual subset of $\XX'$, since $A'$ is a residual subset of $\YY'$. 
Hence, $h(\YY)\sim\interior(h(\YY))=\XX'\sim h'(A')\subset h(A)\subset h(\YY)$, showing that $h(A)$ contains a residual subset of $h(\YY)$ and so, it is a residual subset of $h(\YY)$.
\end{proof}

Proposition~\ref{PropositionRESUMO} below summarizes the previous results of the Appendix for the maps that appear in Section~\ref{SectionAttractors}.
\begin{Proposition}
\label{PropositionRESUMO}
Let $\XX$ be compact metric space, $\XX_0$ an open and dense subset of $\XX$ and $f:\XX_0\to\XX$ a continuous non-singular map.
\begin{enumerate}
\item If $A$ is an open subset of $\XX$ then $f^*(A)\subset\overline{\interior(f^*(A))}$.
\item If $U\subset\XX$ is a nonempty open set and $A\subset U$ is a Borel set that is residual in $U$, then $f^*(A)$ is a residual subset of $f^*(U)$.
\end{enumerate}
\end{Proposition}
\begin{proof}
First, let $\YY=\XX_0$ and $h=f$.
If $A$ is an open subset of $\XX$, then $A\cap\XX_0$ is an open set of $\YY$ and, by Lemma~\ref{LemmaDoInterior}, $f^*(A)=f(A\cap\XX_0)$ $=$ $h(A)$ $\subset$ $\overline{\interior h(A)}$ $\subset$ $\overline{\interior(f(A\cap\XX_0))}$ $=$ $\overline{\interior(f^*(A))}$, proving item (1).

To prove item (2), take $\YY=U\cap\XX_0$, $A'=A\cap\YY$ and $h=f|_{\YY}$.
Since $A'$ is a residual subset of $\YY$ and $h$ is a non-singular continuous map, it follows from Corollary~\ref{CorLemResnaoSingsing} that $f^*(A)=f(A\cap\XX_0)=f(A\cap U\cap\XX_0)=h(A')\sim h(\YY)=f(U\cap\XX_0)=f^*(U)$.
\end{proof}

\begin{Lemma}\label{Lemmakufyuvooyb}
Let $\YY$ be a separable Baire  metric space, $\XX$ a Baire  metric space and $h:\YY\to\XX$ a continuous non-singular map.
There exists a residual set $\YY'\subset\YY$ such that 
if $p\in\YY'$ then $h(p)\in\interior(h(V))$ for every open set $V\subset\YY$ containing $p$.
\end{Lemma}
\begin{proof}
Let $\cc$ be the set of all points $p\in\YY$ such that $h(p)\notin\interior(h(V))$ for some neighborhood $V$ of $p$.
Let $\cq$ be a dense and countable subset of $\YY$ and, for each $n\in\NN$, define
$\cu(n)$ as the set of all points $p\in\YY$ such that $h(p)\notin\interior(h(B_{1/n}(q)))$ for some $q\in\cq$.

Note that if $V$ is an open neighborhood of a point $p\in\YY$ and $h(p)\notin\interior(h(V))$ then $h(p)\notin\interior(h(U))$ for every open set $U\subset V$ containing $p$.
Indeed, if $p\in\interior h(U)$ then $p\in\interior h(U)\subset \interior h(V)$, a contradiction.
Hence, we can conclude that 
$$\cu(1)\subset\cu(2)\subset\cu(3)\subset\cdots\subset \bigcup_{n\in\NN}\cu(n)=\cc.$$

As the boundary of an open set is a meager set and as $h$ is non-singular, we get that $h^{-1}(\partial(\interior(h(B_{1/n}(q))))$ is a meager set for every $n\in\NN$ and $q\in\cq$.
Thus,
$$\widetilde{\cc}:=\bigcup_{q\in\cq,\,n\in\NN}h^{-1}(\partial(\interior(h(B_{1/n}(q))))$$
is a meager set.
By Lemma~\ref{LemmaDoInterior}, if $p\in\cu(n)$ then  $h(p)\in\partial(\interior(h(B_{1/n}(q)))$ for some $q\in\cq$ and so, $\cu(n)\subset\widetilde{\cc}$ for every $n\in\NN$, proving that $\cu(n)$ is a meagre set. 
As this implies that $\YY':=\YY\setminus\cc$ is a residual set, we conclude the proof.
\end{proof}

\subsection{Continuous foliations}\label{SectionCOntF}
In this section, let $M$ be a $C^k$ manifold, $k\ge0$.
A partition $\cf$ of a $M$ is called a ($\ell$-dimensional) continuous foliation when every element of $\cf$ is a path-connected set and there exists a collection $\ca$ of maps $\varphi:(0,1)^{\ell}\times(0,1)^{\dim(M)-\ell}\to M$ satisfying the following conditions.
\begin{enumerate}
\item $\im(\varphi)$ is an open subset of $M$;
\item $\varphi$ is a homeomorphism between $(0,1)^{\ell}\times(0,1)^{\dim(M)-{\ell}}$ and $\im(\varphi)$;
\item if $\psi\in\ca$ and $\im(\varphi)\cap\im(\psi)\ne\emptyset$ then $\psi^{-1}\circ\varphi:\varphi^{-1}(\im(\psi))\to\psi^{-1}(\im\varphi)$ can be written as $$\psi^{-1}\circ\varphi(x,y)=(h_{\varphi,\psi}(x,y),g_{\varphi,\psi}(y))\in(0,1)^{\ell}\times(0,1)^{\dim(M)-{\ell}},$$
where $x\in(0,1)^{\ell}$ and $y\in(0,1)^{\dim(M)-{\ell}}$;
\item $\bigcup_{\psi\in\ca}\im(\psi)=M$.
\end{enumerate}
The collection of maps $\ca$ above is called a {\bf\em $\cf$-atlas}.
For Lemma~\ref{LemmaCotFoliBairePart} below, suppose that $\cf$ is a continuous foliation of $M$ with a $\cf$-atlas $\ca$.

\begin{Lemma}\label{LemmaCotFoliBairePart}
 If $R$ is residual in an open set $U$ then $\cf(R)$ is residual in $\cf(U)$.
\end{Lemma}
\begin{proof}[Sketch of the proof]
Given $x\in M$, denote the element of $\cf$ containing $x$ by $\cf(x)$.
If $V\subset M$, define $\cf(V)=\bigcup_{x\in V}\cf(x)$.
Let $\pi_2:\RR^{\ell}\times\RR^{\dim M-{\ell}}\to \RR^{\dim M-{\ell}}$ be the projection on the second coordinate, i.e., $\pi_2(x,y)=y$.
Given $\varphi\in\ca$ and $x\in\im(\varphi)$, let $\cf_{\varphi}(x)=\varphi(\RR^{\ell}\times\{\pi_2\circ \varphi^{-1}(x)\})$.
One can show that $\cf_{\varphi}(x)$ is the connected component of $\cf(x)\cap\im(\varphi)$ containing $x$.

A map $\gamma:V\to M$ is a {\bf\em $\varphi$-transverse section for $\cf$} when $V$ is an open subset of $(0,1)^{\dim(M)-\ell}$ and $\gamma(v)=\varphi(h_{\gamma}(v),v)$ for some continuous map $h_{\gamma}:V\to(0,1)^{\ell}$. 
It follows from the definition that a $\varphi$-transverse section $\gamma$ is a continuous injective map, $\im(\gamma)\subset\im(\psi)$ and $\#(\im(\gamma)\cap\cf_{\psi}(x))\le1$ for every $x\in\im(\psi)$.
A {\bf\em local transverse section for $\cf$} is a $\psi$-transverse section for some $\psi\in\ca$.

If $\varphi\in\ca$ and $\gamma$ is a $\varphi$-transverse section then  $V_{\varphi}(\gamma):=\bigcup_{x\in\im(\gamma)}\cf_{\varphi}(x)$ is an open neighborhood of $\im(\gamma)$ contained in $\im(\varphi)$.
Define $\pi_{\varphi,\gamma}:V_{\varphi}(\gamma)\to\im(\gamma)$ by $\pi_{\varphi,\gamma}(x)=\im(\gamma)\cap\cf_{\varphi}(x)$ and note that
\begin{center}
$\pi_{\varphi,\gamma}$ is a continuous non-singular map.
\end{center}
Indeed, up to the homeomorphism $\varphi$, $\pi_{\varphi,\gamma}$ is the projection of an open subset of $\RR^{\ell}\times V\subset(0,1)^{\ell}\times\RR^{\dim(M)-\ell}$ onto the graph of $h_{\gamma}$, i.e., $\pi_{\varphi,\gamma}=\varphi\circ\pi_{h_{\gamma}}\circ\varphi^{-1}(x))$, where $\pi_{h_{\gamma}}(u,v)=(h_{\gamma}(v),v)$ is a projection onto the image of the graph of a continuous map, which is non-singular map.

If $\gamma_1$ and $\gamma_2$ are $\varphi$-transverse sections and $\gamma_1(v_2)\in\cf_{\varphi}(\gamma_a(v_1))$ then there exist an open neighborhood  $A$ of $\gamma_1(v_1)$ and an open neighborhood  $B$ of $\gamma_2(v_a)$ such that  the map $$h:A\cap\im(\gamma_1)\to B\cap\im(\gamma_2)$$ 
defined by $$h(x)=\pi_{\varphi,\gamma_2}(x)$$ is a homeomorphism such that $\cf(h(x))=\cf(x)$ for every $x$ in the domain of $h$.
This map is a local holonomy. 
Using standard arguments given by the assumption that every  $\cf(x)$ is path-connected, one can show that the following result.
\begin{Claim}
\label{Claimhvguf}
If $q\in\cf(p)$ and  $\gamma_1,\gamma_2$ are local transverse section for $\cf$ such that $p\in\im(\gamma_1)$ and $q\in\im(\gamma_2)$ then exist an open neighborhood $A$ of $p$, an open neighborhood  $B$ of $q$ and a homeomorphism $h:A\cap\im(\gamma_1)\to B\cap\im(\gamma_2)$ such that $\cf(h(x))=\cf(x)$ $\forall x\in A\cap\im(\gamma_1)$.
\end{Claim}
Finally, over the hypothesis of Claim~\ref{Claimhvguf}, it follows from $\pi_{\varphi_1,\gamma_1}$ and  $\pi_{\varphi_2,\gamma_2}$ being non-singular continuous maps $($\footnote{ We are assuming that $\gamma_j$ is a $\varphi_j$-transverse section, $i=1,2$, for $\varphi_1,\varphi_2\in\ca$.}$)$ that,  
if $U\subset A$ is an open set an $R$ is residual in $U$, then $ V:=(\pi_{\varphi_2,\gamma_2})^{-1}(h(\pi_{\varphi_1,\gamma_1}(U)))$ is an open subset of $M$ containing $q$ and $\cf_{\varphi_2}(h(\pi_{\varphi_1,\gamma_1}(R)))=(\pi_{\varphi_2,\gamma_2})^{-1}(h(\pi_{\varphi_1,\gamma_1}(R)))$ is residual in $V$.
Therefore, since $\cf(R)\supset \cf_{\varphi_2}(h(\pi_{\varphi_1,\gamma_1}(R)))$, we get that $\cf(R)\cap V$ is residual in $\cf(U)\cap V=V$.

Given an open set $U\subset M$ and $R\subset U$ residual in $U$, we can use the argument above to every $p\in U$ and $q\in\cf(p)$, showing that $\cf(R)$ is a residual in $\cf(U)$.
\end{proof}

\end{document}